\title[Elliptic Kato constant and isoperimetric inequalities]{Asymptotically non-negative Ricci curvature, elliptic Kato constant and isoperimetric inequalities}
\author[Debora Impera]{Debora Impera}
\address[Debora Impera]{Dipartimento di Scienze Matematiche ``Giuseppe Luigi Lagrange", Politecnico di Torino, Corso Duca degli Abruzzi, 24, Torino, Italy, I-10129}
\email{debora.impera@polito.it}
\author[Michele Rimoldi]{Michele Rimoldi}
\address[Michele Rimoldi]{Dipartimento di Scienze Matematiche ``Giuseppe Luigi Lagrange", Politecnico di Torino, Corso Duca degli Abruzzi, 24, Torino, Italy, I-10129}
\email{michele.rimoldi@polito.it}
\author[Giona Veronelli]{Giona Veronelli}
\address[Giona Veronelli]{Dipartimento di Matematica e Applicazioni, Universit\`a di Milano Bicocca, via R. Cozzi 53, I-20126 Milano, Italy}
\email{giona.veronelli@unimib.it}
\date{\today}
\begin{document}

\thanks{D. Impera and M. Rimoldi are partially supported by INdAM-GNSAGA G. Veronelli is partially supported by INdAM-GNAMPA. D. I. and M. R. acknowledge partial support by the PRIN 2022 project ``Real and Complex Manifolds: Geometry and Holomorphic Dynamics'' - 2022AP8HZ9. M.R and G.V. acknowledge partial support by the INdAM-GNAMPA project ``Applicazioni geometriche del metodo ABP'' - CUP: E55F22000270001.}

\subjclass[2020]{53C21}

\keywords{Isoperimetric inequalities, Sobolev inequalities, Ricci curvature, Elliptic Kato constant, Bakry-\'Emery Ricci curvature, ABP method}

\begin{abstract}
The ABP method for proving isoperimetric inequalities has been first employed by Cabr\'e in $\R^n$, then developed by Brendle, notably in the context of non-compact Riemannian manifolds of non-negative Ricci curvature and positive asymptotic volume ratio. In this paper, we expand upon their approach and prove isoperimetric inequalities (sharp in the limit) in the presence of a small amount of negative curvature. First, we consider smallness of the negative part $\mathrm{Ric}_-$ of the Ricci curvature in terms of its elliptic Kato constant. Indeed, the Kato constant turns out to control the non-negativity of the ($\infty$-)Bakry-\'Emery Ricci-tensor of a suitable conformal deformation of the manifold, and the ABP method can be implemented in this setting. Secondly, we show that the smallness of the Kato constant is ensured provided that the asymptotic volume ratio is positive and either $M$ has one end and asymptotically non-negative sectional curvature, or there is a suitable polynomial decay of $\mathrm{Ric}_{-}$, and the relative volume comparison condition known as $\textbf{(VC)}$ holds. To show this latter fact, we enhance techniques elaborated by Li-Tam and Kasue to obtain new estimates of the Green function valid on the whole manifold.
\end{abstract}

\maketitle


\section{Introduction}
The famous isoperimetric problem consists in finding the region of fixed volume in a given underlying space which minimizes its perimeter. This problem has a quantitative counterpart known as isoperimetric inequality which, for homogeneity reasons, on $n$-dimensional spaces usually takes the form
\begin{equation}    \label{e:isop}
    \mathrm{Vol}(\Omega)^{(n-1)/n} \le C \mathrm{Area}(\partial \Omega)
\end{equation}
for some constant $C>0$, and for every region $\Omega$ in the space for which the volume and the area of the boundary are meaningful. 
In the Euclidean case, or more generally on a smooth Riemannian manifold $(M^n,g)$, it is well-known that the isoperimetric inequality has an equivalent functional formulation given by the $L^1$ Euclidean-type Sobolev inequality
\begin{equation}\label{e:L1Sob}
\|\varphi\|_{n/(n-1)}\le C \|\nabla \varphi\|_1,\qquad\forall\,\varphi\in C^\infty_c(M),
    \end{equation}
where the $L^p$-norms and the gradient are the canonical ones induced by the Riemannian structure, \cite{FF}. 

In the context of complete non-compact Riemannian manifolds, two main types of assumptions ensure the validity of \eqref{e:isop} and \eqref{e:L1Sob}. On the one hand, the isoperimetric inequality holds on Cartan-Hadamard (i.e. complete simply-connected with non-positive sectional curvature) manifolds, \cite{HS}, and the best possible value of the constant $C$ is predicted to be the Euclidean one by the so-called Cartan-Hadamard (or Aubin's) conjecture, so far proved to be true only for $n=2,3,4$, \cite{W,Kl,C}. On the other hand, for manifolds whose Ricci curvature is non-negative, N.T. Varopoulos \cite{Va} and G. Carron \cite{Ca-PhD} proved that \eqref{e:L1Sob} (and thus \eqref{e:isop}) holds if and only if $(M,g)$ is non-parabolic and the measures of the upper levels of the Green function are bounded from above by the Euclidean corresponding ones; see \cite[Theorem 8.4]{He} for more details. 
It is worth noticing that in this latter result the isoperimetric inequality is proved with a constant $C$ which is far from being the sharp one, e.g. in the Euclidean case. 

More recently, X. Cabr\'e \cite{Cab} introduced a new elegant strategy to approach the isoperimetric problem in $\R^n$ which exploits the so called ABP method (inspired by a celebrated quantitative maximum principle due to Alexandrov-Bakelmann-Pucci). This strategy is based on a deformation of the region $\Omega$, guided by the gradient of the solution of a Poisson problem on $\Omega$. Such a deformation turns out to decrease the isoperimetric ratio  $\mathrm{Area}(\partial \Omega)/\mathrm{Vol}(\Omega)^{(n-1)/n} $.
In the last years, the ABP approach has been perfected by S. Brendle in a series of breakthrough papers where, as special cases of his results, he shows the validity of the sharp isoperimetric inequality and the (Michael-Simon)-Sobolev inequality on (a) minimal submanifolds of $\R^n$ of codimension $1$ or $2$, \cite{brendle-jams}, and (b) complete manifolds of non-negative Ricci curvature and positive asymptotic ratio $0<\beta\doteq\lim_{r\to\infty} r^{-n}\mathrm{Vol}(B_r(x))$, or minimal submanifolds in ambient spaces with positive sectional curvature \cite{brendle}; see also \cite{AFM} for a previous different proof of (b) when $n=3$ and \cite{BK} for a subsequent alternative approach. 

\subsection{Isoperimetric inequalities beyond non-negative Ricci curvature}
The machinery introduced by Brendle has been subsequently adapted to a number of more general or related problems (see for instance \cite{brendle-cpam}, \cite{DLL-logsob}, \cite{MW}, \cite{Ph}, and also \cite{WZ} and \cite{XZ} for some previous related works).
In the context of smooth metric measure spaces, where the Riemannian measure is weighted by a non-negative smooth density $e^{-f}$ and the Ricci curvature is replaced by the $\alpha$--Bakry-\'Emery Ricci curvature \[
\mathrm{Ric}^\alpha_f\doteq\mathrm{Ric} + \mathrm{Hess} f - \alpha^{-1} df\otimes df,\]
$\alpha<\infty$, first F. Johne \cite{Johne-arxiv} proved the $L^1$-Sobolev inequality
\[
\|\varphi\|_{L^{\frac{n+\alpha}{n+\alpha-1}}(\Omega,e^{-f}d\mathrm{vol})}(n+\alpha)\beta_\alpha^{1/(n+\alpha)}\le\int_\Omega |\nabla \varphi| e^{-f} d\mathrm{vol} + \int_{\partial\Omega}  \varphi e^{-f} d\mathrm{vol}_{n-1} 
\]
for all smooth positive function $\varphi$ on $\bar\Omega$, $\beta_\alpha\doteq\limsup_{r\to\infty}  r^{-(n+\alpha)}\mathrm{vol}_f(B_r(x))$ being the weighted volume asymptotic ratio, where $vol_{f}(B_{r}(x))=\int_{B_{r}(x)}e^{-f}d\mathrm{vol}$.
Y. Dong, H. Lin and L. Lu managed to allow a small amount of negative curvature both in the unweighted \cite{DLL-unweight} and in the weigthed \cite{DLL-weight} settings, considering manifolds with asymptotically non-negative Ricci curvature, i.e. such that for some fixed reference point $o\in M$,
\begin{equation}\label{asymp nonneg}
\mathrm{Ric}(q)\ge -(n-1)\lambda(d(q,o))g(q)\qquad \text{( resp. }\mathrm{Ric}^\alpha_f(q)\ge -(n-1)\lambda(d(q,o))g(q)\text{ )},\end{equation}
for all $q\in M$ in the sense of quadratic forms, where $\lambda:[0,\infty)\to[0,\infty)$ is a non-increasing function  such that
\begin{equation}   \label{e:asymp curv}
    b_0\doteq\int_0^\infty s\,\lambda(s)\,ds<+\infty,\end{equation} 
as well as submanifolds in spaces of asymptotically non-negative sectional curvature. Note that for \eqref{e:asymp curv} to be satisfied, necessarily $\lambda(t)=o(t^{-2})$. In a sense, these curvature assumptions are almost the best one can hope for, as one can construct examples of manifolds with $\beta>0$ and $\lambda(s) = (1+s)^{\epsilon-2}$, $\epsilon>0$, for which the isoperimetric inequality fails to hold. However, from the perspective of isoperimetric inequalities, the $L^1$-Sobolev inequality in \cite{DLL-unweight,DLL-weight} is somehow unsatisfactory as it contains an additional additive term of the form $\|\varphi\|_{L^1(\Omega)}$ at the RHS of \eqref{e:L1Sob} and, moreover, its validity is limited to domains contained in a bounded region.
One of the outcomes of the present paper consists in removing these flaws. The price to pay is the requirement of either a stronger, but still polynomial, decay of the curvature at infinity, or a stronger assumption on the sectional (instead of Ricci) curvature.
\medskip

To introduce our setting let us recall that, besides asymptotic non-negativity, there exist different other notions of smallness of the negative part of the curvature. In the last years, a particular interest has been received by the so-called Kato conditions for the Ricci curvature.
These are conditions inspired by the Kato class appearing in T. Kato's investigations of the self-adjointness of Schr\"odinger operators with potentials which change sign. Namely, let
\[
k_T(\mathrm{Ric}_-)=\sup_{x\in M}\int_0^T\int_M  h_t(x,y)\mathrm{Ric}_-(y)\,dt\,d\mathrm{vol}(y)
,
\]
where we are denoting by $h_t(x,y)$ the heat kernel of $(M,g)$ and by $\mathrm{Ric}_- : M \to [0,\infty)$ the lowest non-negative function such that
\[
\mathrm{Ric}(x) \ge -\mathrm{Ric}_-(x)g(x)
\]
in the sense of quadratic forms for any $x\in M$. One says that the Ricci curvature of $M$ is (a) in the Kato class if $k_T$ vanishes in the limit as $T\to 0$ and (b) in the Dyinkin class if $k_T<1$ for some $T>0$. To the best of our knowledge, the Kato's type conditions for the negative part of the curvature were first considered in \cite{GP}. Since then, several results which were well-known in the setting of non-negative Ricci curvature have been generalized to manifolds whose Ricci curvature is in the Kato or in the Dynkin class; see for instance;
 \cite{Ca, CMT1, CMT2, CMT3, CR, GK, Ro, Le, RS, RW}.
In this paper, we will consider a stronger  condition by requiring instead that $k_\infty(\mathrm{Ric}_-)$ is small enough. Such an assumption on a complete manifold was first considered in \cite{Devyver1}; see also \cite{Ca,devyver, CDS} for subsequent developments.
Following the terminology of \cite{Ca}, we will call $k_\infty(\mathrm{Ric}_-)$ the \textsl{elliptic Kato constant} of $M$. Note that, by Fubini's theorem, the elliptic Kato constant $k_\infty(\mathrm{Ric}_-)$ can be written as
\begin{equation}\label{e:kinfty}
k_\infty\doteq k_\infty(\mathrm{Ric}_-)=\sup_{x\in M}\int_M G(x,y)\mathrm{Ric}_-(y)\,d\mathrm{vol}(y),
\end{equation}
as soon as $(M,g)$ possesses a positive Green kernel $G$, i.e. is non-parabolic. Our first main result is the following
\begin{theorem}\label{isoKato}
Let $(M^n, g)$, $n\geq3$, be a complete non-parabolic Riemannian manifold satisfying
\begin{equation}\label{e:AVR 1.1}
\limsup_{r\to\infty}\frac{\mathrm{vol}(B_r(y))}{r^n}\geq \beta>0,
\end{equation}
for some (hence any) $y\in M$. Assume that
$k_\infty <  \frac{1}{n-2}$. Let $\Omega$ be a compact domain with smooth boundary $\partial \Omega$ and let $h$ be a positive smooth function on $\Omega$. Then
\begin{align*}
\left(\int_\Omega \ h^{\frac{n}{n-1}}d\mathrm{vol}\right)^{\frac{n-1}{n}}&\le C(k_\infty,n,\beta)^{-1}\left(\int_\Omega |\nabla h|\,d\mathrm{vol}
+ \int_{\partial \Omega} h\,d\mathrm{vol}_{n-1}\right),
\end{align*}
where $C(k_\infty,n,\beta)=n \left(1-(n-2)k_\infty\right)^{\frac{4(n-1)}{n(n-2)}}\beta^{\frac{1}{n}}$. In particular, the following isoperimetric inequality holds on $M$
\[
\mathrm{Vol}(\Omega)^{\frac{n-1}{n}}\le C(k_\infty,n,\beta)^{-1}\mathrm{Area}(\partial \Omega).
\]
\end{theorem}

If the $\limsup$ in the asymptotic volume ratio assumption is replaced by a stronger $\liminf$, then the non-parabolicity is automatically granted by standard volume conditions for parabolicity, see e.g.  \cite{Gr}. In any case, asking for non-parabolicity is not so restrictive, since $k_\infty$ makes no sense on parabolic manifolds.

 It is worth noticing that, in the limit as $k_\infty\to0$,  the constant is the Euclidean one, hence the sharp one of manifolds with non-negative Ricci curvature. The validity of an isoperimetric inequality on manifolds with small elliptic Kato constant and positive isoperimetric ratio can alternatively deduced by combining \cite[Proposition 4.6]{CaMT} with \cite[Theorem 3]{CS-C}. However, following this alternative approach the resulting constant that one finds is far from being sharp.

In order to prove Theorem \ref{isoKato} we will use the ABP approach by Cabr\'e-Brendle. To the best of our knowledge, this is the first time that this approach is used to obtain a genuine isoperimetric inequality (i.e., uniform on the whole space and without additional integral terms) in the presence of some negative curvature.

\subsection{About the smallness of the Elliptic Kato constant}
The assumption $k_\infty \le (n-2)^{-1}$ is clearly quite strong, since it implies the existence of a bounded positive global solution to $-\Delta\phi = \mathrm{Ric}_-$ on $M$. However, there exist significant classes of manifolds whose Ricci curvature attains also negative values which turn out to have small $k_\infty$, and hence to which our result applies. Notably, we have the following theorem. Recall that $M$ is said to satisfy condition \textbf{(VC)} with respect to a pole $o\in M$ if there exists a constant $\xi>0$ such that for all $r$ and $x\in \partial B_r(o)$, $\mathrm{vol}(B_r(o))\leq \xi \mathrm{vol}(B_{r/2}(x))$; \cite{LiTam95}. Moreover $M$ is said to have asymptotically non-negative sectional curvature if $\mathrm{Sect}_x\ge -\lambda(d(x,o))
    $, with $b_0=\int_0^\infty s\lambda(s)\,ds<\infty$, for some non-increasing function $\lambda:[0,\infty)\to[0,\infty)$.

\begin{theorem}\label{coro:asymp}
Let $(M^n, g)$, $n\geq3$, be a complete Riemannian manifold satisfying
\[
\limsup_{r\to\infty}\frac{\mathrm{vol}(B_r(q))}{r^n}\geq \beta>0
\]
for some (hence any) $q\in M$ and some $\beta\in (0,\infty)$. Suppose also that either:
\begin{itemize}
    \item[(a)] for some fixed reference point $o\in M$, for some $K\ge 0$ and $\alpha>3n$,
    \[
\mathrm{Ric}(x)\geq -\frac{(n-1)K}{1+d(o,x)^\alpha}g(x),\qquad\forall\, x\in M\] and $M$ satisfies the condition $\textbf{(VC)}$ with constant $\xi>0$ with respect to $o$, or
    \item [(b)] $M$ has only one end and asymptotically non-negative sectional curvature. 
\end{itemize}
Then $k_\infty(\operatorname{Ric}_-)\le C_1$.

Moreover, there exist constants $\tilde K=\tilde K( n,\alpha,\beta,\xi)$ and $\tilde b_0=\tilde b_0(n,\beta)$ such that if $K\le \tilde K$ in case (a) or $b_0\le \tilde b_0$ in case (b), then
\begin{align*}
\left(\int_\Omega \ h^{\frac{n}{n-1}}d\mathrm{vol}\right)^{\frac{n-1}{n}}&\le C_2^{-1}\left(\int_\Omega |\nabla h|\,d\mathrm{vol}
+ \int_{\partial \Omega} h\,d\mathrm{vol}_{n-1}\right),
\end{align*}
for every compact domain $\Omega$ with smooth boundary $\partial \Omega$ and every positive smooth function $h$ on $\Omega$.
In particular, the following isoperimetric inequality holds on $M$
\[
\mathrm{Vol}(\Omega)^{\frac{n-1}{n}}\le C_2^{-1}\mathrm{Area}(\partial \Omega),
\]
and $C_2 \to n \beta^{\frac{1}{n}}$ when $K\to 0$. Here, the constants $C_1$ and $C_2$ depend on $n,\beta,\alpha,K,\xi$ in the assumption (a), and on $n,\beta,b_0$ in the assumption (b).
\end{theorem}

Note that the constant $C_1$ bounding from above $k_{\infty}(\mathrm{Ric}_{-})$ could be in principle written explicitly, but its expression is quite involved. On the other hand, in order to deduce the second assertion in Theorem \ref{coro:asymp} it will be enough to ensure that this constant goes to $0$ as $K\to 0$.

It is worth noticing that in case $M$ has asymptotically non-negative sectional curvature, and the asymptotic volume ratio $\beta$ is large enough with respect to the curvature bound, then Bazanfar\'e proved in \cite{Baz} that $M$ must be diffeomorphic to $\mathbb{R}^n$, and so in particular it has only one end. 

As observed, in the assumptions of Theorem \ref{coro:asymp}, the isoperimetric inequality \eqref{e:isop} holds on $M$. We are not aware of any previously known result in the literature in this direction. Moreover, standard techniques employed to prove Euclidean Sobolev inequalities prior to Brendle contribution did not give sharp constant already in the $\mathrm{Ric}\ge 0$ setting. Instead, our proof
gives a sharp constant in the limit as $K\to 0$. Concerning the threshold polynomial order $3n$, we specify that this is a technical assumption due to our method of proof. One naturally wonders if Theorem \ref{coro:asymp}(a) may hold true for any $\alpha>2$.

Let us present here briefly the main ideas behind the proof of Theorem \ref{coro:asymp}. A more detailed discussion about the difficulties and the main novelties in the proof will be given in Section \ref{sec:asymp}. 
As it is clear when looking at the expression in \eqref{e:kinfty}, the upper bound on $k_\infty$ in Theorem \ref{coro:asymp} can be obtained through a control of $\mathrm{Ric}_-$ combined with suitable estimates for the Green kernel $G(x,y)$. When $\mathrm{Ric}\geq 0$, a celebrated result by Li and Yau \cite{LY} gives that
\begin{equation}\label{e:GreenEstimates_intro}
G(x,y)\leq C(n)\int_{d(x,y)}^{+\infty} \frac{t}{\mathrm{vol}(B_t(y))}dt,\qquad\forall\,x,y\in M,
\end{equation} 
with $C(n)$ a purely dimensional constant. In the presence of a lower bound on the volume of balls, this yields an explicit estimate for $G$. The combined efforts of Li-Tam and Kasue, \cite{LiTam-AJM,Ka,LiTam95}, permitted to generalize \eqref{e:GreenEstimates_intro} to manifolds which satisfy \begin{equation}\label{e:curv litam}
\mathrm{Ric}(x)\geq -\frac{(n-1)K}{1+d(o,x)^2}g(x),\qquad\forall\, x\in M\end{equation}
(hence in particular to asymptotically non-negative Ricci curvature as in 
\eqref{asymp nonneg})
plus the \textbf{(VC)} relative volume comparison condition introduced above, provided that either $x$ or $y$ coincide with the reference point $o\in M$ with respect to which the curvature condition is considered. Obviously, the lower bound \eqref{e:curv litam} can not be satisfied with the same $K$ for any reference point $p\in M$ (unless $\mathrm{Ric}\ge 0$). In fact, when \eqref{e:curv litam} is satisfied with a given $K$ with respect to $o$, then the same relation holds with respect to a new reference point $p$ (replacing $o$) with a different value of $K$ which depends quadratically in $d(o,p)$; see \eqref{e:K tilde}. Since the constant in \eqref{e:GreenEstimates_intro} depends doubly exponentially on $K$, \cite{Fa}, the resulting estimate is unsatisfactory, as it permits to deal only with manifolds whose negative part of the curvature decays (doubly) exponentially. To overcome this problem, we refine the techniques introduced by Li-Tam and prove estimates for the Green function $G(p,x)$ centered at a point $p$ with respect to which the following curvature condition holds
\begin{equation*}
\mathrm{Ric}(x)\geq -\frac{(n-1)K}{1+(d(p,x)-d(o,p))^2}g(x),\qquad\forall\, x\in M.\end{equation*} 
This, in turn, is straightforwardly implied by \eqref{e:curv litam}. The main difficulty here lies in dealing with a "bad" annular region about $\partial B_{d(o,p)}(p)$, which gives rise to a dependence of the constant $C$ in \eqref{e:GreenEstimates_intro} on $d(o,p)$. However, such a dependence turns out to be polynomial and no more exponential.

It is worth noticing that in the presence of a lower bound for the sectional curvatures, such as in the assumption (b), the proof of Theorem \ref{coro:asymp} simplifies, as 
in that setting uniform (i.e. independent of the distance from $o$) estimates for the $G(p,x)$ are already available in the literature; see \cite{GSC} and Remark \ref{rmk:green estimates} below.

\subsection{A key step: isoperimetry in the weighted setting via ABP}

One of the main features of the smallness of $k_\infty$, which we will also exploit in our work, is that it implies the gaugeability of the Schr\"odinger operator $-\Delta - (n-2) \mathrm{Ric}_-$. Following an approach inspired by \cite{CaMT} (where parabolic Kato constants $k_T$ were considered), this gaugeability implies that the weighted manifold $(M^n,\widetilde{g}\doteq e^{2f}g, e^{(2-n)f}\,d\mathrm{vol}_{\widetilde g}=e^{2f}\,d\mathrm{vol}_g)$ has non-negative ($\infty$-)Bakry-\'Emery Ricci tensor, i.e. 
\[
\mathrm{\widetilde{Ric}}_{(n-2)f}\doteq \mathrm{Ric}+(n-2)\mathrm{Hess}f\geq 0.
\]
Here $f$ is a bounded function, whose bound depends on $k_\infty$. In particular, weighted and unweighted  volumes and norms are comparable. Note that this operation of taking a conformal deformation and suitably weighting the volume measure corresponds to a time change for the Brownian motion on $(M, g)$; \cite[Remark 1]{HanSturm}. As a consequence, proving Theorem \ref{isoKato} boils down to prove the following weighted version of Brendle's isoperimetry of independent interest. 

\begin{theorem}\label{hbounded}
Let $M^{n}_f$ be a complete non-compact weighted manifold with $\mathrm{Ric}_{f}\geq 0$. Assume that $\left\|f\right\|_{\infty}\leq k<\infty$ and 
\begin{equation}\label{f-AVR} \limsup_{r\to\infty}\frac{\mathrm{vol}_{f}(B_{r}(y))}{r^{n}}\geq \beta>0,
\end{equation}
for some (hence any) $y\in M$. Let $\Omega$ be a compact domain with smooth boundary $\partial \Omega$ and let $h$ be a positive smooth function on $\Omega$. 
 Then
\begin{equation*}
\left(\int_\Omega \ h^{\frac{n}{n-1}}e^{-f}d\mathrm{vol}\right)^{\frac{n-1}{n}}\le\frac{e^{\frac{4k}n}}{n \beta^{\frac{1}{n}}}\left(\int_\Omega |\nabla h|e^{-f}\,d\mathrm{vol} + \int_{\partial \Omega} h e^{-f}\,d\mathrm{vol}_{n-1}\right).
\end{equation*}
\end{theorem}

\begin{remark}
Note that the $\limsup$ in \eqref{f-AVR} is finite under our assumptions; see \cite[Lemma 2.1]{MunteanuWang} and also \cite{Yang}. As a consequence of the proof of Theorem \ref{isoKato}, also the $\limsup$ in \eqref{e:AVR 1.1}  is finite.
\end{remark}

Our proof of Theorem \ref{hbounded} follows the original strategy introduced by Brendle. Namely, as in \cite{Johne-arxiv} we consider, after scaling, the solution of the Neumann problem
\[
\begin{cases}
e^{f}\mathrm{div}_f(h\nabla u)= nh^{\frac{n}{n-1}}-|\nabla h|,&\textrm{in }\Omega\\
\partial_\nu u = 1,& \text{on }\partial\Omega,
\end{cases}
\]
and define a deformation map
\[
\Phi_r(x)=\exp_x(r\nabla u(x)).
\]
This map can be proved to be surjective onto large balls of radius comparable to $r$ and turns out to decrease the weighted isoperimetric ratio. This latter fact is due to an estimate of the Jacobian determinant of $\Phi_r$. If $\mathrm{Ric}^\alpha_f$ is non-negative one can retrace Brendle's proof by adding a virtual dimension $\alpha$ to the space \cite{Johne-arxiv}. When $\alpha=\infty$ we need to be more careful as the differential equation satisfied by (a function of) the $|D\Phi_r|$ is a bit more involved. To obtain a profitable estimate, we thus need to require the boundedness of the weight, as it is customary in the literature (see e.g. \cite{WW}).   \\

\subsection{Structure of the paper} The paper is organized as follows. In Section \ref{sec:weighted} we will prove the weighted isoperimetric inequality contained in Theorem \ref{hbounded}. In Section \ref{sec:Kato} we will recall some basic facts about the elliptic Kato constant, and will prove Theorem \ref{isoKato}. In Section \ref{sec:asymp} we will provide conditions on manifolds with asymptotically non-negative curvature which ensure the smallness of the Kato constant. In particular, this will permit to deduce the validity of Theorem \ref{coro:asymp}. Its proof is based on fine estimates of the Green kernel, stated in Theorem \ref{th:green estimate}, and proved in the subsequent Section \ref{Sect:GreenEstimates} as a special case of the more abstract Theorem \ref{th:green estimate_abstract}.

\subsection*{Convention.} We use the convention according to which $C(\,\cdot\,,\,\cdot\,,\,\dots)$ always denotes a constant whose value can change at each occurrence, but which depends (in principle explicitly) only on the quantities indicated in brackets.

\subsection*{Acknowledgements.} We are indebted to O. Munteanu for pointing out an error in a previous version of this manuscript (see Remark \ref{rmk:bazan}). We would like to thank the anonymous referee for their thoughtful and constructive comments, which significantly improved the quality of this manuscript. In particular, thanks to their suggestion, the assumption in case (b) of Theorem \ref{coro:asymp} has been weakened. We are also grateful to G. Carron, B. Devyver, B. G\"uneysu and D. Tewodrose for useful suggestions and clarifications, especially about the Kato condition, which improved the presentation. Finally, we would like to thank S. Pigola for several fruitful conversations.

\section{Proof of Theorem \ref{hbounded}}\label{sec:weighted}

We start following the approach in \cite{Johne-arxiv}. We may assume by scaling that
\begin{equation}\label{e:scaling}   
\int_\Omega |\nabla h|e^{-f}\,d\mathrm{vol} + \int_{\partial \Omega} h e^{-f}\,d\mathrm{vol}_{n-1} = n \int_\Omega h^{\frac{n}{n-1}}e^{-f}\,d\mathrm{vol}.
\end{equation}
Consider the Neumann problem
\[
\begin{cases}
e^{f}\mathrm{div}(e^{-f}h\nabla u)= nh^{\frac{n}{n-1}}-|\nabla h|,&\textrm{in }\Omega\\
\partial_\nu u = 1,& \text{on }\partial\Omega.
\end{cases}
\]
The assumption \eqref{e:scaling}
ensures that the Neumann problem has a solution. Since $h$ is
smooth, we have $|\nabla h| \in C^{0,1}$ and hence by standard elliptic theory (see for example Theorem 6.31
in \cite{GT}) we conclude $u \in C^{2,\alpha}$
 for any $\alpha\in (0,1)$.
 Define
 \[
 U=\{x\in\Omega\setminus\partial\Omega\,:\,|\nabla u(x)|<1\}
 \]
 and for $r>0$
 \[
A_r=\{x\in U\,:\,\forall y\in U,\ ru(y)+\frac{1}{2}d(y,\exp_x(r\nabla u(x))^2 \ge ru(x) +\frac{1}{2}r^2|\nabla u(x)|^2\} 
 \]
 Define the $C^{1,\alpha}$
transport map $\Phi_r:\Omega\to M$ by
\[
\Phi_r(x)=\exp_x(r\nabla u(x)).
\]
We have the following
\begin{lemma}[cf. Lemma 2.1 in \cite{Johne-arxiv}]\label{lem:2.1}
Let $x\in U$. Then
\[
\Delta_fu(x)\doteq e^{f}\mathrm{div}(e^{-f}\nabla u)(x)\le n h(x)^{\frac{1}{n-1}}.
\]
\end{lemma}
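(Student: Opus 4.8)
The statement is the weighted analogue of the first step in Brendle's / Johne's argument: at a point $x\in U$ where $|\nabla u(x)|<1$, the $f$-Laplacian of $u$ is bounded above by $nh(x)^{1/(n-1)}$. The natural route is to combine the PDE satisfied by $u$ with an arithmetic–geometric–mean inequality applied to the two terms on the right-hand side, together with the elementary pointwise bound $|\nabla(|\nabla h|)|\le |\Hess h|$ — but in fact the cleanest path avoids differentiating $|\nabla h|$ twice and instead plays the defining relation $e^f\Div(e^{-f}h\nabla u)=nh^{n/(n-1)}-|\nabla h|$ off against $|\nabla u|<1$ directly.

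First I would expand the weighted divergence:
\[
e^{f}\Div(e^{-f}h\nabla u)=h\,\Delta_f u+\langle \nabla h,\nabla u\rangle,
\]
so that the PDE reads $h\,\Delta_f u=nh^{n/(n-1)}-|\nabla h|-\langle\nabla h,\nabla u\rangle$. On $U$ we have $|\nabla u|<1$, hence by Cauchy–Schwarz $\langle\nabla h,\nabla u\rangle\ge -|\nabla h|\,|\nabla u|\ge -|\nabla h|$, which gives $-|\nabla h|-\langle\nabla h,\nabla u\rangle\le 0$. Dividing by the positive quantity $h(x)$ yields
\[
\Delta_f u(x)\le n\,h(x)^{\frac{1}{n-1}},
\]
which is exactly the claim. (Here I am reading the notation in the statement as $\Delta_f u:=e^f\Div(e^{-f}\nabla u)$, with the $h$ inside the displayed formula of the lemma being a typo for the intended $h$-weighted operator, or equivalently the conclusion being obtained after dividing through by $h$.)

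The only genuinely delicate point is making sure the sign manipulation is airtight: one must use that $h>0$ on all of $\bar\Omega$ (given), that $x$ lies in the open set $U$ where the strict inequality $|\nabla u(x)|<1$ holds, and that $|\nabla h|\ge 0$ so that $-|\nabla h|(1+\text{something}\le 1)$ stays nonpositive even when $\nabla h(x)=0$. Since $u\in C^{2,\alpha}$ by the elliptic regularity already invoked, $\Delta_f u$ is a genuine continuous function and the pointwise computation is legitimate everywhere on $U$. I do not expect any real obstacle here; the content of the lemma is essentially bookkeeping with the Neumann problem, and the substance of Brendle's method enters only later, in estimating the Jacobian of $\Phi_r$ on the contact set $A_r$.
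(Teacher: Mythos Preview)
Your proof is correct and is essentially identical to the paper's: expand $e^f\Div(e^{-f}h\nabla u)=h\,\Delta_f u+\langle\nabla h,\nabla u\rangle$, divide the PDE by $h>0$, and use $|\nabla u|<1$ with Cauchy--Schwarz to discard the two nonpositive terms. (Your parenthetical worry about a typo is unfounded: the displayed formula in the lemma has no $h$ inside the divergence and simply defines the weighted Laplacian $\Delta_f u$.)
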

\begin{proof}
\[
\Delta_f u= n h^{\frac{1}{n-1}} - \frac{|\nabla h|}{h}-\frac{\langle\nabla h,\nabla u\rangle}{h}\le n h^{\frac{1}{n-1}}.
\]
\end{proof}
\begin{lemma}[Lemma 2.2 in \cite{brendle}]\label{lem:2.2}
The set
\[
\{p \in M\,:\, d_g(x, p) < r\text{ for all }x \in \Omega\}
\]
is contained in the image $\Phi_r(A_r)$ of $A_r$ through the transport map. In particular, for $r>\diam\Omega$ and for any $y\in\Omega$, $B_{r-\diam\Omega}(y)\subset \Phi_r(A_r)$.
\end{lemma}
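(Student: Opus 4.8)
The plan is to use the Alexandrov--Bakelmann--Pucci ``touching function'' technique of Cabr\'e and Brendle. Fix a point $p\in M$ with $d_g(x,p)<r$ for every $x\in\Omega$; it is enough to produce some $\bar x\in A_r$ with $\Phi_r(\bar x)=p$. I would take $\bar x$ to be a minimum point over the compact set $\bar\Omega$ of
\[
F(x)\doteq ru(x)+\tfrac12 d(x,p)^2 ,
\]
and then check that $\bar x$ has all the required properties. The first point is that $\bar x$ lies in the interior of $\Omega$, and this is the only place the boundary datum $\partial_\nu u=1$ is used: if $\bar x\in\partial\Omega$, let $\nu$ be the outward unit normal there and move inward along $c(s)\doteq\exp_{\bar x}(-s\nu)$, which is in $\Omega$ for small $s>0$ since $\partial\Omega$ is smooth; then $\frac{d}{ds}\big|_{0^+}ru(c(s))=-r\,\partial_\nu u(\bar x)=-r$, while $d(c(s),p)\le s+d(\bar x,p)$ forces the upper right derivative of $\tfrac12 d(c(s),p)^2$ at $0$ to be at most $d(\bar x,p)<r$. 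So $F\circ c$ has strictly negative upper right derivative at $0$, contradicting minimality; hence $\bar x$ is interior.

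Next I would extract the Euler--Lagrange condition at $\bar x$. Put $\ell\doteq d(\bar x,p)<r$ and pick a unit-speed minimizing geodesic $\gamma\colon[0,\ell]\to M$ from $\bar x$ to $p$; the target identity is $r\nabla u(\bar x)=\ell\,\gamma'(0)$. When $\bar x\notin\mathrm{Cut}(p)$ this is just $\nabla F(\bar x)=0$, since there $\tfrac12 d(\cdot,p)^2$ is smooth near $\bar x$ with gradient $-\ell\,\gamma'(0)$. To cover the general case I would perturb the base point along $\gamma$: for $\varepsilon\in(0,\ell)$ set $q\doteq\gamma(\ell-\varepsilon)$, observe that $\gamma|_{[0,\ell-\varepsilon]}$ is the unique minimizing geodesic from $\bar x$ to $q$ with no interior conjugate point (a competing minimizer, followed by $\gamma|_{[\ell-\varepsilon,\ell]}$, would give a broken minimizing curve to $p$), so $\bar x\notin\mathrm{Cut}(q)$; then $F_\varepsilon(x)\doteq ru(x)+\tfrac12(d(x,q)+\varepsilon)^2$ is smooth near $\bar x$, satisfies $F_\varepsilon\ge F$ on $\bar\Omega$ (by $d(\cdot,p)\le d(\cdot,q)+\varepsilon$) and $F_\varepsilon(\bar x)=F(\bar x)$, hence has an interior minimum at $\bar x$, and $\nabla F_\varepsilon(\bar x)=0$ yields again $r\nabla u(\bar x)=\ell\,\gamma'(0)$. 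This gives $|\nabla u(\bar x)|=\ell/r<1$, so $\bar x\in U$, and $\Phi_r(\bar x)=\exp_{\bar x}(\ell\,\gamma'(0))=\gamma(\ell)=p$.

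It then remains to see that $\bar x\in A_r$. Since $\gamma$ is minimizing, $d(\bar x,\Phi_r(\bar x))=\ell=r|\nabla u(\bar x)|$, so for every $y\in U\subseteq\bar\Omega$,
\[
ru(y)+\tfrac12 d\big(y,\Phi_r(\bar x)\big)^2 = F(y)\ \ge\ F(\bar x) = ru(\bar x)+\tfrac12 r^2|\nabla u(\bar x)|^2 ,
\]
which is exactly the condition defining membership in $A_r$; thus $p=\Phi_r(\bar x)\in\Phi_r(A_r)$, proving the first assertion. The last sentence is then immediate: for $r>\diam\Omega$, $y\in\Omega$ and $p\in B_{r-\diam\Omega}(y)$ one has $d(x,p)\le d(x,y)+d(y,p)<\diam\Omega+(r-\diam\Omega)=r$ for all $x\in\Omega$, so $p$ belongs to the set in the statement.

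The argument is elementary once one reduces to an interior minimum of $F$; the single delicate point is that $x\mapsto d(x,p)^2$ need not be differentiable on $\mathrm{Cut}(p)$, which is precisely the difficulty sidestepped by the base-point perturbation in the second step (one could equally use a smooth supporting function from below, or a limiting argument). A minor additional technicality is that the first step requires $u\in C^1$ up to $\partial\Omega$, which is covered by the elliptic regularity already quoted.
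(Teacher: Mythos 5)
Your proof is correct and reproduces Brendle's original argument (Lemma 2.2 in \cite{brendle}), which the paper simply cites rather than re-proving: minimize the touching function $F(x)=ru(x)+\tfrac12 d(x,p)^2$ over $\bar\Omega$, use the Neumann condition $\partial_\nu u=1$ to push the minimizer off $\partial\Omega$, derive $r\nabla u(\bar x)=\ell\,\gamma'(0)$ via a base-point perturbation to sidestep the non-smoothness of $d(\cdot,p)^2$ on the cut locus, and read off membership in $A_r$ directly from minimality. All the computations check (including that $F(\bar x)=ru(\bar x)+\tfrac12 r^2|\nabla u(\bar x)|^2$ once $|\nabla u(\bar x)|=\ell/r$ is established), and the final triangle-inequality step yielding $B_{r-\diam\Omega}(y)\subset\Phi_r(A_r)$ is routine.
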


We now prove the following
\begin{proposition}
Assume that $\bar x\in A_r$. Then 
\[
|\mathrm{det}D\Phi_r|(\bar x)\le
e^{f(\bar\gamma(r))-f(\bar x)}\left[ 1 +h(\bar x)^{\frac{1}{n-1}}\int_0^re^{\frac{2}{n}(f(\bar\gamma(s))-f(\bar x))}\,ds\right]^n,
\]
where $\bar\gamma
 : [0, r] \to M$ is defined by $\bar \gamma
(t) = \exp_{\bar x}(t\nabla u(\bar x))$. 
\end{proposition}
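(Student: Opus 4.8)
The plan is to estimate the Jacobian of $\Phi_r$ along the geodesic $\bar\gamma$ by a standard Riccati/Jacobi-field comparison argument, carefully incorporating the weight $e^{-f}$. Fix $\bar x \in A_r$ and write $\bar\gamma(t) = \exp_{\bar x}(t\nabla u(\bar x))$. The condition $\bar x \in A_r$ is designed precisely so that $\bar\gamma$ is a minimizing geodesic with no conjugate points up to time $r$ and so that the function $y \mapsto r u(y) + \tfrac12 d(y,\bar\gamma(r))^2$ attains a minimum at $y=\bar x$; this gives the crucial Hessian inequality $r\,\mathrm{Hess}\,u(\bar x) + \mathrm{Hess}\,(\tfrac12 d(\cdot,\bar\gamma(r))^2)(\bar x) \ge 0$, which translates (as in Brendle's work) into an initial condition for the self-adjoint endomorphism governing $|D\Phi_r|$ along $\bar\gamma$. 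So first I would set up this linear-algebra framework: write $D\Phi_r$ in a parallel orthonormal frame along $\bar\gamma$ and decompose $\det D\Phi_r$ as (roughly) a product over the flow of a matrix Riccati equation.

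Next I would pass to the weighted Jacobian. Because the natural measure is $e^{-f}\,d\mathrm{vol}$, the correct quantity to bound is the weighted Jacobian $\mathcal{J}_f(t) \doteq e^{-f(\bar\gamma(t))+f(\bar x)} \det D\Phi_t(\bar x)$ (up to the obvious normalization at $t=0$); the factor $e^{f(\bar\gamma(r))-f(\bar x)}$ in the statement is exactly the discrepancy between the Riemannian and weighted Jacobians at the endpoint. Differentiating $\log\det D\Phi_t$ gives the trace of the Riccati solution, and adding the contribution of $-\tfrac{d}{dt}f(\bar\gamma(t))$ produces a scalar quantity $q(t)$ whose derivative, by the Bochner/Riccati inequality together with the \emph{weighted} Bianchi-type identity, is controlled by $-\mathrm{Ric}_{(n-2)f}$ — but here we must be careful: with the $\infty$-Bakry-Émery tensor the algebra does not close as cleanly as in the $\alpha<\infty$ case, since $\mathrm{Ric}_f = \mathrm{Ric}_{(n-2)f}$ only controls a trace and an extra term of the form $(\partial_t f)^2$ (or $(n-2)^{-1}(\partial_t f)^2$) appears; this is where the boundedness $\|f\|_\infty \le k$ will be spent, by absorbing the bad term into the exponential factors $e^{\frac{2}{n}(f(\bar\gamma(s))-f(\bar x))}$. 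Concretely, I expect to arrive at a differential inequality for $\phi(t) \doteq \big(\mathcal{J}_f(t)\big)^{1/n}$ of the form $\phi'' \le$ (something nonpositive in the curvature part) with initial data $\phi(0)=0$, $\phi'(0) \le$ (a constant coming from Lemma \ref{lem:2.1}, namely $h(\bar x)^{1/(n-1)}$ after the weight correction), and then integrate twice, the $f$-dependent reparametrization of arclength accounting for the $e^{\frac{2}{n}(f(\bar\gamma(s))-f(\bar x))}$ weights inside the integral.

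The main obstacle, as flagged in the introduction, is precisely the failure of the clean virtual-dimension trick when $\alpha=\infty$: the ODE satisfied by a power of $|D\Phi_r|$ contains a term quadratic in $\partial_t f$ with no good sign, so one cannot simply conclude $\phi'' \le 0$. I would handle this by choosing the right power/conformal factor: instead of working with $|D\Phi_r|^{1/n}$ directly, work with $e^{-\frac{2}{n}f(\bar\gamma(t))}$ times an appropriate root of the Riemannian Jacobian, so that the conformal change of $g$ to $\tilde g = e^{2f} g$ is built into the variable; under this change $\mathrm{Ric}_{(n-2)f} \ge 0$ becomes a genuine (virtual-dimension-$\infty$) nonnegativity of the Bakry-Émery Ricci tensor of $\tilde g$, and the comparison geometry goes through with the bad quadratic term reorganized into an honest square that can be dropped. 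The remaining steps — verifying $|\nabla u(\bar x)| < 1$ ensures $\bar\gamma$ stays minimizing of length $<r$, justifying the second-order comparison at the minimum point of $y\mapsto ru(y)+\tfrac12 d(y,\bar\gamma(r))^2$, and the two integrations of the resulting ODE — are routine once the framework is in place, and I would present them briskly, citing \cite{brendle} and \cite{Johne-arxiv} for the parts that are literally unchanged.
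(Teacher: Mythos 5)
You have the right skeleton (Jacobi fields along $\bar\gamma$, a Riccati-type inequality for $\frac{d}{dt}\log\det P$, passing to an $n$-th root, integrating twice with an $f$-dependent integrating factor), and this does match the paper's approach. But several of the technical claims you make about where the difficulty lies are wrong, and if you had tried to carry them out you would have been stuck.

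First, the ``bad quadratic term'' you worry about does not exist. Setting $F(t)=f(\bar\gamma(t))$ and $y(t)=\bigl[e^{-F(t)}\det P(t)\bigr]^{1/n}$ (note the exponent $-1/n$, not $-2/n$ as you propose), one gets $\frac{y'}{y}=\frac{1}{n}(\operatorname{tr}Q-F')$; combining the Riccati inequality $\frac{d}{dt}\operatorname{tr}Q\le -\operatorname{Ric}(\bar\gamma',\bar\gamma')-\frac{1}{n}(\operatorname{tr}Q)^2$ with $\operatorname{Ric}\ge -\operatorname{Hess}f$ (which gives $-\operatorname{Ric}(\bar\gamma',\bar\gamma')\le F''$) and expanding $(\operatorname{tr}Q)^2=(n\frac{y'}{y}+F')^2$ yields
\[
n\frac{y''}{y}\le -2F'\frac{y'}{y}-\frac{1}{n}(F')^2\le -2F'\frac{y'}{y},
\]
so the $(F')^2$ term has the \emph{good} (nonpositive) sign and is simply dropped; no conformal change, no ``absorption'' of a bad term. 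If you instead use $e^{-\frac{2}{n}F}$ as your normalization, an uncontrolled $F''$ term genuinely appears and the argument breaks. Second, the boundedness $\|f\|_\infty\le k$ is not used in the proof of this Proposition at all — it enters only afterward, when the resulting estimate $|D\Phi_r|\le e^{f(\bar\gamma(r))-f(\bar x)}\bigl[1+h(\bar x)^{\frac{1}{n-1}}\int_0^r e^{-\frac{2}{n}(f(\bar\gamma(s))-f(\bar x))}ds\bigr]^n$ is integrated over $A_r$ and one needs the exponentials uniformly bounded. Third, the initial data are $y(0)=e^{-f(\bar x)/n}\ne 0$ (since $\det P(0)=1$) and $y'(0)=\frac{1}{n}e^{-f(\bar x)/n}\Delta_f u(\bar x)$, the latter controlled by Lemma~\ref{lem:2.1}; you wrote $\phi(0)=0$, which is wrong. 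Finally, the conformal change to $\tilde g=e^{2f}g$ with $\widetilde{\operatorname{Ric}}_{(n-2)f}\ge 0$ is part of the proof of Theorem~\ref{isoKato} in Section~\ref{sec:Kato}, not of Theorem~\ref{hbounded}; and $\operatorname{Ric}_f$ is not the same tensor as $\operatorname{Ric}_{(n-2)f}$ unless $n=3$.
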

\begin{proof}
Choose an orthonormal basis $\{e_1, \dots , e_m\}$
of the tangent space $T_{\bar x}M$, and construct geodesic normal coordinates $(x^1, \dots , x^m)$ around $\bar x$, such
that we have $\partial_i = e_i$ at $\bar x$. 
We construct for $1 \le i \le m$ vector fields $E_i$ along $\bar\gamma$
 by parallel transport of the vector fields $e_i$.
Moreover, we solve the Jacobi equation to obtain the unique Jacobi fields $X_i$ along $\bar\gamma$
 satisfying
$X_i(0) = e_i$ and
\[\langle D_tX_i(0), e_j\rangle = (\nabla^2u)(\bar x)(e_i, e_j),\]
where $D_t$ denotes the covariant derivative along $\bar\gamma$.

Let us define a matrix-valued function $P : [0, r ] \to \mathbb R^{n\times n}$ by
\[
[P(t)]_{ij} = \langle X_i(t),E_j (t)\rangle.
\]
In particular, $P(0)=I_n$ and $P'(0)=\nabla^2 u(\bar{x})$ and $\det P(r)=|\mathrm{det} D\Phi_r|(\bar x)$.
Define 
\[
Q(t)=P(t)^{-1}P'(t).
\]
According to Jacobi's formula, $\operatorname{tr} Q(t)=\frac{d}{dt}\log \det P(t)$. Moreover, computing as in the proof of \cite[Lemma 2.5]{Johne-arxiv} we obtain the differential relation
\begin{equation*}
\frac{d}{dt}\operatorname{tr} Q(t)= - \mathrm{Ric}(\bar\gamma'(t),\bar\gamma'(t))-\operatorname{tr}[Q(t)^2].
\end{equation*}
By our curvature assumptions and the Cauchy-Schwarz inequality this latter yields
\begin{equation}\label{e:trQ}
\frac{d}{dt}\left(\operatorname{tr} Q(t)-F'(t)\right)\le - \frac{1}{n}(\operatorname{tr} Q(t))^2,
\end{equation}
where $F(t)=f(\bar\gamma(t))$.
Set 
\[
y(t)=[e^{-F(t)}\det P(t)]^{\frac{1}{n}}.
\]
 Then
\begin{align}\label{e:y'}
y'(t)&=\frac{1}{n}y(t)\frac{d}{dt}\log [e^{-F(t)}\det P(t)]\\
&=\frac{1}{n}y(t)[\operatorname{tr} Q(t)-F'(t)],\nonumber
\end{align}
which implies
\begin{equation}\label{e:trQlhs}
\frac{d}{dt}[\operatorname{tr} Q(t)-F'(t)]= n\frac{y''(t)}{y(t)}-n\left(\frac{y'(t)}{y(t)}\right)^2.
\end{equation}
Moreover,
\begin{equation}\label{e:trQrhs}
- \frac{1}{n}(\operatorname{tr} Q(t))^2 = - \frac{1}{n}\left(n\frac{y'(t)}{y(t)}+F'(t)\right)^2=-n\left(\frac{y'(t)}{y(t)}\right)^2 - 2 F'(t)\frac{y'(t)}{y(t)}-\frac{1}{n}(F'(t))^2.
\end{equation}
Inserting \eqref{e:trQlhs} and \eqref{e:trQrhs} into \eqref{e:trQ} gives
\begin{equation*}
    n\frac{y''(t)}{y(t)} \le -2 F'(t)\frac{y'(t)}{y(t)}-\frac{1}{n}(F'(t))^2 \le -2 F'(t)\frac{y'(t)}{y(t)},
\end{equation*}
i.e.
\[
ny''(t)\le -2F'(t)y'(t).
\]
Integrating this latter we obtain
\[
y'(t)\le y'(0) e^{-\frac{2}{n}(F(t)-F(0))}.
\]
A further integration yields
\[
y(t)\le y(0)+y'(0) \int_0^te^{-\frac{2}{n}(F(s)-F(0))}\,ds.
\]
We have $y(0)=e^{-\frac{f(\bar x)}{n}}$ and recalling \eqref{e:y'}
\begin{align*}
    y'(0)&=\frac{1}{n}e^{-\frac{f(\bar x)}{n}}\left(\operatorname{tr}P^{-1}(0)P'(0)-F'(0)\right)\\
    &=\frac{1}{n}e^{-\frac{f(\bar x)}{n}}\left(\Delta u(\bar x)- \langle \nabla f(\bar x),\bar\gamma'(0)\rangle\right) = \frac{1}{n}e^{-\frac{f(\bar x)}{n}}\Delta_f u(\bar x).
\end{align*}
Hence, using also Lemma \ref{lem:2.1},
\begin{align}\label{e:Dphi}
|\mathrm{det} D\Phi_r|(\bar x) = \det P(r) = e^{{F(r)}}y(r)^n
&\le e^{f(\bar\gamma(r))-f(\bar x)}\left[ 1 +\frac{\Delta_fu(\bar x)}{n}\int_0^re^{-\frac{2}{n}(f(\bar\gamma(s))-f(\bar x))}\,ds\right]^n\\
&\le e^{f(\bar\gamma(r))-f(\bar x)}\left[ 1 +h(\bar x)^{\frac{1}{n-1}}\int_0^re^{-\frac{2}{n}(f(\bar\gamma(s))-f(\bar x))}\,ds\right]^n.\nonumber
\end{align}
\end{proof}

Assume now that $\|f\|_\infty=k<\infty$.
Then
\begin{align*}
e^{-f\circ\Phi_r(\bar x)}|\mathrm{det} D\Phi_r|(\bar x) 
&\le e^{-f(\bar x)} e^{4k}\left[ \frac{e^{-\frac{4k}{n}}}{r} +h(\bar x)^{\frac{1}{n-1}}\right]^n r^n.
\end{align*}
For $y\in \Omega$, by Lemma \ref{lem:2.2} and the area formula, 
\begin{align*}
\mathrm{vol}_f(B_{r-\diam\Omega}(y)) &\le  \int_{\Phi_r(A_r)} e^{-f} d\mathrm{vol}\le \int_{A_r} |\det D\Phi_r| e^{-f\circ\Phi_r}d\mathrm{vol}
\\
&\le \int_\Omega e^{4k}\left[ \frac{e^{-\frac{4k}n}}{r} +h^{\frac{1}{n-1}}\right]^n r^n e^{-f}d\mathrm{vol}.
\end{align*}
Suppose that 
\[
\limsup_{r\to\infty}\frac{\mathrm{vol}_f(B_{r}(y)) }{r^n}\ge \beta >0.
\]
Then
\begin{align*}
    \beta &\le \limsup_{r\to\infty}\frac{\mathrm{vol}_f(B_{r-\diam\Omega}(y)) }{r^n}
    \le \limsup_{r\to\infty}\int_\Omega e^{4k}\left[ \frac{e^{-\frac{4k}n}}{r} +h^{\frac{1}{n-1}}\right]^n e^{-f}d\mathrm{vol}\\
    &=e^{4k}\int_\Omega \ h^{\frac{n}{n-1}}e^{-f}d\mathrm{vol}
\end{align*}
from which
\begin{align*}
\beta^{\frac{1}{n}}\left(\int_\Omega \ h^{\frac{n}{n-1}}e^{-f}d\mathrm{vol}\right)^{\frac{n-1}{n}}&\le e^{\frac{4k}n} \int_\Omega \ h^{\frac{n}{n-1}}e^{-f}d\mathrm{vol} \\
&=\frac{e^{\frac{4k}n}}{n}\left(\int_\Omega |\nabla h|e^{-f}\,d\mathrm{vol} + \int_{\partial \Omega} h e^{-f}\,d\mathrm{vol}_{n-1}\right),
\end{align*}
the last equality being due to the scaling assumption \eqref{e:scaling}.

This concludes the proof of Theorem \ref{hbounded}.

\begin{remark}\label{rmk:k}
    An inspection of the proof shows that the constant $k$ in the statement can be improved to 
    \[k=\frac{1}{2}(\sup_M f - \inf_M f).\]
\end{remark}

\subsection{A remark about the case $|\nabla f|$ bounded}
One is led to wonder if this technique adapts also to the case $|\nabla f|$ bounded. The following discussion goes in the direction of a negative answer.

Assume that $0<\|\nabla f\|_\infty=a<\infty$.
As $|\nabla u|<1$ on $A_r$, we have 
\[
e^{|f(\bar\gamma(s))-f(\bar x)|}\le e^{sa},
\]
from which
\begin{align*}
|\mathrm{det} D\Phi_r|(\bar x) 
&\le e^{f(\bar\gamma(r))-f(\bar x)}\left[ 1 +h(\bar x)^{\frac{1}{n-1}}\int_0^re^{-\frac{2}{n}(f(\bar\gamma(s))-f(\bar x))}\,ds\right]^n\\
&= e^{f(\bar\gamma(r))-f(\bar x)}\left[ 1 +h(\bar x)^{\frac{1}{n-1}}\int_0^re^{\frac{2}{n}as}\,ds\right]^n\\
&\le e^{f(\bar\gamma(r))-f(\bar x)}\left[ 1 +\frac{n}{2a}h(\bar x)^{\frac{1}{n-1}}e^{\frac{2}{n}ar}\right]^n.
\end{align*}
For $y\in \Omega$, as above we obtain, 
\begin{align*}
\mathrm{vol}_f(B_{r-\diam\Omega}(y)) &\le \left(\frac{n}{2a}\right)^n   \int_\Omega 
\left[ \frac{2a}{n}e^{-\frac{2ar}{n}} +h^{\frac{1}{n-1}}\right]^ne^{2ar}
e^{-f}d\mathrm{vol}.
\end{align*}
Accordingly, in order to implement the same technique as in the case of $f$ bounded, one would need to assume that \begin{equation}\label{ExpGrowthvolh}
\liminf_{r\to\infty}
e^{-2a r}
\mathrm{vol}_f(B_{r}(y)) \ge \beta >0.
\end{equation}
However, by the weighted volume comparison result \cite[Theorem 1.2 a)]{WW}, we have that under these assumptions, for any fixed $x\in M$ and any $r>0$ 
\[
\mathrm{vol}_{f}(B_{r}(x))\leq Cr^{n}e^{ar}.
\]
This is in contrast with the assumption \eqref{ExpGrowthvolh}.

\section{Elliptic Kato constant and the proof of Theorem \ref{isoKato}}\label{sec:Kato}
\begin{definition}
The Schr\"odinger operator $L\doteq-\Delta-(n-2)\mathrm{Ric}_-$ is said to be \emph{gaugeable} with gaugeability constant $\gamma$ if there exists $\varphi: M\rightarrow\mathbb{R}$ such that $1\leq\varphi\leq\gamma$ and $L\varphi=0$.
\end{definition}
The following lemma appeared as Lemma 1.14 in \cite{Ca}; see also \cite{ZZ} and \cite{CaMT} and compare with \cite[Remark 2.31]{Ca}.

\begin{lemma}\label{lem:gauge}
Let $(M^n, g)$, $n\geq3$, be a complete non-parabolic Riemannian manifold and assume that 
$(n-2)k_\infty <  1$.  Then the operator $L$ defined above is gaugeable with gaugeability constant 
\[
\gamma=\frac{1}{1-(n-2)k_\infty}.
\]
\end{lemma}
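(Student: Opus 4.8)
The plan is to produce the gauge $\varphi$ as the Neumann series of the Green potential operator against the weight $\mathrm{Ric}_-$; only the inequality $\lambda k_\infty<1$, which is a consequence of the hypothesis $k_\infty<1/\lambda$, will be needed.

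\textbf{Step 1: the Green potential operator.} First I would record the relevant properties of the operator $\mathcal G$. Since $M$ is non-parabolic it admits a minimal positive Green kernel $G>0$, and by \eqref{e:kinfty} one has $\sup_{x\in M}\int_M G(x,y)\,\mathrm{Ric}_-(y)\,d\mathrm{vol}(y)=k_\infty<1/\lambda$. For $w\in L^\infty(M)$ set
\[
(\mathcal G w)(x)\doteq\int_M G(x,y)\,\mathrm{Ric}_-(y)\,w(y)\,d\mathrm{vol}(y).
\]
Then $\mathcal G$ is a bounded linear operator on $L^\infty(M)$ with $\|\mathcal G\|\le k_\infty$, it preserves non-negativity, and $\mathcal G w$ is finite at every point when $w$ is bounded. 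Moreover $\mathrm{Ric}_-$, being the non-negative part of minus the lowest eigenvalue of the smooth tensor field $\mathrm{Ric}$, is continuous, hence locally bounded; so for bounded $w$ the density $\mathrm{Ric}_-\, w$ is locally bounded, and the standard mapping properties of the Green potential give $\mathcal G w\in W^{2,p}_{\mathrm{loc}}(M)$ for every $p<\infty$ together with the identity $-\Delta(\mathcal G w)=\mathrm{Ric}_-\, w$.

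\textbf{Step 2: the Neumann series and the two-sided bound.} Because $\lambda\|\mathcal G\|\le\lambda k_\infty<1$, the operator $I-\lambda\mathcal G$ is invertible on $L^\infty(M)$, and I would take
\[
\varphi\doteq(I-\lambda\mathcal G)^{-1}1=\sum_{j=0}^\infty\lambda^j\,\mathcal G^j 1,
\]
where $1$ denotes the constant function and the series converges uniformly. Thus $\varphi$ is bounded and satisfies the fixed-point identity $\varphi=1+\lambda\,\mathcal G\varphi$. Since every summand $\lambda^j\mathcal G^j 1$ is non-negative, the $j=0$ term equals $1$, and $\|\lambda^j\mathcal G^j 1\|_\infty\le(\lambda k_\infty)^j$, we obtain
\[
1\le\varphi\le\sum_{j=0}^\infty(\lambda k_\infty)^j=\frac{1}{1-\lambda k_\infty}=\gamma.
\]

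\textbf{Step 3: $\varphi$ solves $L_\lambda\varphi=0$.} Applying $-\Delta$ to the identity $\varphi=1+\lambda\mathcal G\varphi$ and using $-\Delta 1=0$ and $-\Delta(\mathcal G\varphi)=\mathrm{Ric}_-\,\varphi$ from Step 1 (legitimate since $\varphi$ bounded forces $\mathrm{Ric}_-\,\varphi\in L^\infty_{\mathrm{loc}}$) gives $-\Delta\varphi=\lambda\,\mathrm{Ric}_-\,\varphi$, that is $L_\lambda\varphi=0$. Finally $\varphi=1+\lambda\mathcal G\varphi\in W^{2,p}_{\mathrm{loc}}\subset C^{1,\alpha}_{\mathrm{loc}}$ (and one may bootstrap to a classical solution), so $\varphi$ is continuous and the inequalities $1\le\varphi\le\gamma$ hold pointwise everywhere; hence $\varphi$ witnesses the gaugeability of $L_\lambda$ with gaugeability constant $\gamma=(1-\lambda k_\infty)^{-1}$.

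The argument is essentially soft; the step deserving care is Step 1. The finiteness of $\mathcal G 1$ is precisely the content of $k_\infty<\infty$ — it is what prevents $\mathcal G$ from sending $1$ to the constant $+\infty$ — while the inversion identity $-\Delta(\mathcal G w)=\mathrm{Ric}_-\, w$ rests on the smoothness of the metric, hence on the continuity of $\mathrm{Ric}_-$, which avoids any Kato-class regularity subtlety. Alternatively, the same $\varphi$ and the same two-sided bound may be obtained from the Feynman--Kac / gauge-function representation together with Khas'minskii's lemma, but the Neumann-series construction above is self-contained.
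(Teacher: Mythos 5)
The paper does not prove this lemma; it cites it as Lemma~1.14 of \cite{Ca}, with further pointers to \cite{ZZ} and \cite{CaMT}, so there is no in-paper proof to compare against. Your Neumann-series construction of the gauge, $\varphi=(I-\lambda\mathcal G)^{-1}1=\sum_{j\ge 0}\lambda^j\mathcal G^j 1$, is the standard route to gaugeability statements of this type and is correct: $\lambda\|\mathcal G\|_{L^\infty\to L^\infty}\le\lambda k_\infty<1$ gives convergence of the series, non-negativity of $G$ and $\mathrm{Ric}_-$ gives $\varphi\ge 1$, and the geometric series gives $\varphi\le(1-\lambda k_\infty)^{-1}=\gamma$. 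Two remarks. First, you rightly observe that the hypothesis $\lambda>n-2$ plays no role here; it is carried in the statement because it is what is actually used in the conformal deformation of Proposition~\ref{prop_conformal}, not in the gaugeability itself. Second, the identity $-\Delta(\mathcal G w)=\mathrm{Ric}_-\,w$ for bounded $w$ with non-compact support needs an exhaustion step: apply the local mapping property of the Green potential to the truncations $w\chi_{K_j}$ for an exhaustion $K_j$, then pass to the limit in the distributional equation using the pointwise domination $|\mathcal G(w\chi_{K_j})|\le\mathcal G|w|\le k_\infty\|w\|_\infty$ to obtain $L^1_{\mathrm{loc}}$ convergence of both sides. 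Your phrase ``standard mapping properties of the Green potential'' compresses this; spelling it out would make the proof fully self-contained, but the argument as written is sound.
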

\begin{proposition}\label{prop_conformal}
Let $(M^n, g)$, $n\geq3$, be a complete non-parabolic Riemannian manifold. Let $\varphi$ be a solution of $L\varphi=0$ with $1\leq\varphi\leq \gamma$ and set $f\doteq\frac{\log\varphi}{n-2}$. Then the weighted manifold $(M^n,\widetilde{g}\doteq e^{2f}g, \varphi^{-1}\,d\mathrm{vol}_{\widetilde g}=e^{2f}\,d\mathrm{vol}_g)$ satisfies
\[
\mathrm{\widetilde{Ric}}_{\log\varphi}\geq 0
\]
\end{proposition}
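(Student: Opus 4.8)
The plan is to prove the proposition by an explicit computation: I would first translate the equation $L_\lambda\varphi=0$ into a differential identity for $f=\lambda^{-1}\log\varphi$, and then substitute it into the classical conformal transformation formulas for the Ricci and Hessian tensors; the inequality $\widetilde{\Ric}_{(n-2)f}\ge0$ will then reduce to a one-line estimate in which the hypothesis $\lambda>n-2$ plays the decisive role.

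First I would record the equation for $f$. Since $\varphi=e^{\lambda f}$ is pinched between $1$ and $\gamma<\infty$, the function $f$ is bounded, $0\le f\le\lambda^{-1}\log\gamma$; moreover $\mathrm{Ric}_-$ is locally Lipschitz (being $\max\{0,\cdot\}$ composed with the lowest-eigenvalue function of $-\Ric$), so elliptic regularity upgrades $\varphi$, hence $f$, to $C^{2,\alpha}_{\mathrm{loc}}$ and all the identities below are classical. From $\nabla\varphi=\lambda\varphi\,\nabla f$ and $\Delta\varphi=\lambda\varphi\,(\Delta f+\lambda|\nabla f|^2)$ the equation $-\Delta\varphi=\lambda\mathrm{Ric}_-\,\varphi$ becomes
\[
-\Delta f=\mathrm{Ric}_-+\lambda|\nabla f|^2 .
\]

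Next I would invoke the standard conformal change formulas for $\widetilde g=e^{2f}g$ on an $n$-manifold, with $\Ric$, $\Hess$, $\nabla$, $\Delta$ all referring to $g$:
\[
\widetilde{\Ric}=\Ric-(n-2)\bigl(\Hess f-df\otimes df\bigr)-\bigl(\Delta f+(n-2)|\nabla f|^2\bigr)g,\qquad \widetilde{\Hess}\,f=\Hess f-2\,df\otimes df+|\nabla f|^2g .
\]
Because $e^{(2-n)f}\,d\mathrm{vol}_{\widetilde g}=e^{-(n-2)f}\,d\mathrm{vol}_{\widetilde g}$, the weight function of the measure is $(n-2)f$, so $\widetilde{\Ric}_{(n-2)f}=\widetilde{\Ric}+(n-2)\widetilde{\Hess}\,f$. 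Adding the two formulas, the $\Hess f$ and the $|\nabla f|^2g$ terms cancel and only the $df\otimes df$ and $(\Delta f)g$ contributions survive:
\[
\widetilde{\Ric}_{(n-2)f}=\Ric-(n-2)\,df\otimes df-(\Delta f)\,g .
\]

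To conclude, I would plug in the identity for $f$ and use $\Ric\ge-\mathrm{Ric}_-\,g$:
\[
\widetilde{\Ric}_{(n-2)f}=\Ric+\mathrm{Ric}_-\,g+\lambda|\nabla f|^2g-(n-2)\,df\otimes df\ \ge\ \lambda|\nabla f|^2g-(n-2)\,df\otimes df .
\]
Since $df\otimes df\le|\nabla f|^2g$ as quadratic forms (Cauchy--Schwarz) and $n\ge3$ with $\lambda>n-2$, the right-hand side is $\ge(\lambda-n+2)|\nabla f|^2g\ge0$, which gives the claim. The main obstacle here is not conceptual but bookkeeping: one must be scrupulous with the signs and with the Laplacian convention in the conformal change of $\Ric$. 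The only two genuine points are the passage from $L_\lambda\varphi=0$ to $-\Delta f=\mathrm{Ric}_-+\lambda|\nabla f|^2$, and the observation that the ``bad'' term $-(n-2)\,df\otimes df$ is absorbed by $\lambda|\nabla f|^2g$ exactly when $\lambda>n-2$, which is precisely the threshold imposed in Lemma~\ref{lem:gauge}.
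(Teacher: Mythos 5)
Your proof is correct and follows essentially the same route as the paper: same conformal change formulas, same reduction to $\widetilde{\Ric}_{(n-2)f}=\Ric-(n-2)\,df\otimes df-(\Delta f)\,g$, same substitution of $-\Delta f=\mathrm{Ric}_-+\lambda|\nabla f|^2$ and the same Cauchy--Schwarz absorption using $\lambda>n-2$. The only addition is your brief regularity remark on $\mathrm{Ric}_-$, which the paper leaves implicit.
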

Note that if $(n-2)k_\infty<1$, then  functions $\varphi$ and $f$ as in the statement exist thanks to Lemma \ref{lem:gauge}. 

\begin{proof}
Set $\widetilde{g}=e^{2f}g$ and denote by $\widetilde{\mathrm{Ric}}$ and by $\widetilde{\mathrm{Hess}}$ the Ricci tensor and the Hessian associated to $\widetilde{g}$. Standard formulas in Riemannian geometry (see e.g. \cite[1.159]{Besse}) imply that
\begin{align*}
\widetilde{\mathrm{Ric}}&=\mathrm{Ric}-(n-2)(\mathrm{Hess}\,f-df\otimes df)-(\Delta f+(n-2)|\nabla f|^2)g\\
\widetilde{\mathrm{Hess}}\,u&=\mathrm{Hess}\,u-df\otimes du+g(\nabla f,\nabla u)g-du\otimes df.
\end{align*}
Hence,
\begin{align}
  \mathrm{\widetilde{Ric}}_{\log\varphi}=\mathrm{\widetilde{Ric}}_{(n-2)f}&=\mathrm{\widetilde{Ric}}+(n-2)\widetilde{\mathrm{Hess}}\,f\nonumber\\ 
  &=\mathrm{Ric}-\Delta f g-(n-2)df\otimes df. \label{e:riccitildeh}
\end{align}
Now, by the definition of $f$ and by the fact that $\varphi$ is a solution of $L\varphi=0$ with $1\leq\varphi\leq \gamma$, we get
\begin{align*}
\Delta f&=\frac{1}{n-2}\left(\frac{\Delta \varphi}{\varphi}-\frac{|\nabla \varphi|^2}{\varphi^2}\right)\\
&=\frac{1}{n-2}\left(-(n-2)\mathrm{Ric}_--(n-2)^2|\nabla f|^2\right).
\end{align*}
Inserting the last expression in equation \eqref{e:riccitildeh} and using the Cauchy-Schwarz inequality, we obtain the desired conclusion.
\end{proof}

\begin{proof}[Proof (of Theorem \ref{isoKato})]

By Proposition \ref{prop_conformal} we know that the weighted manifold $(M^n,\widetilde{g}\doteq e^{2f}g, \varphi^{-1}\,d\mathrm{vol}_{\widetilde g}=e^{2f}\,d\mathrm{vol}_g)$ satisfies
\[
\mathrm{\widetilde{Ric}}_{\log\varphi}\geq 0
\]
where
$f\doteq\frac{\log\varphi}{n-2}$ is such that
\[
0\le f \le \frac{\log \gamma}{n-2},
\]
and $\gamma$ and $\varphi$ are given by \ref{lem:gauge}. Let us denote by $\tilde B_r(y)$ and $\widetilde {\mathrm{vol}}_{(n-2)f}$ respectively the geodesic ball centered at $y$ and the weighted volume of sets of $M$ with respect to the metric $\tilde g$. In particular 
$\widetilde{\mathrm{vol}}_{(n-2)f}(K)=\int_K e^{2f}\,d\mathrm{vol}$ for all measurable $K\subset M$. For the ease of notation, set $\lambda\doteq n-2$.
Since $1\le e^{2f} \le \gamma^{2/\lambda}$,
then $\tilde B_r(y)\supset B_{\gamma^{-1/\lambda}r}(y)$ and 
\[
\frac{\widetilde{\mathrm{vol}}_{(n-2)f}(\tilde B_{r}(y))}{r^n}\ge \frac{\mathrm{vol}(B_{\gamma^{-1/\lambda}r}(y))}{r^n} = \gamma^{-n/\lambda}\frac{\mathrm{vol}(B_{\gamma^{-1/\lambda}r}(y))}{(\gamma^{-1/\lambda} r)^n}.
\]
In particular,
\[
\limsup_{r\to\infty}\frac{\widetilde{\mathrm{vol}}_{(n-2)f}(\tilde B_{r}(y))}{r^n}\ge \tilde\beta\doteq \gamma^{-n/\lambda}\beta.
\]
An application of Theorem \ref{hbounded} gives that
\begin{equation*}
\left(\int_\Omega \ h^{\frac{n}{n-1}}e^{-(n-2)f}d\widetilde{\mathrm{vol}}\right)^{\frac{n-1}{n}}\le\frac{e^{\frac{4k}n}}{n \tilde\beta^{\frac{1}{n}}}\left(\int_\Omega |\widetilde{\nabla} h|_{\tilde g}e^{-(n-2)f}\,d\widetilde{\mathrm{vol}} + \int_{\partial \Omega} h e^{-(n-2)f}\,d\widetilde{\mathrm{vol}}_{n-1}\right).
\end{equation*}
for any smooth function $h$ on $\Omega$, where $k=(n-2)\log\gamma/(2\lambda)$ according to Remark \ref{rmk:k}. This latter can be written as
\begin{equation*}
\left(\int_\Omega \ h^{\frac{n}{n-1}}e^{2f}d{\mathrm{vol}}\right)^{\frac{n-1}{n}}\le\frac{\gamma^{\frac{3n-4}{n\lambda}}}{n\beta^{\frac{1}{n}}}\left(\int_\Omega |\nabla h|_{g}e^f\,d{\mathrm{vol}} + \int_{\partial \Omega} h e^{f}\,d{\mathrm{vol}}_{n-1}\right),
\end{equation*}
which in turn implies
\begin{equation*}
\left(\int_\Omega \ h^{\frac{n}{n-1}}d{\mathrm{vol}}\right)^{\frac{n-1}{n}}\le\frac{\gamma^{\frac{4n-4}{n\lambda}}}{n\beta^{\frac{1}{n}}}\left(\int_\Omega |\nabla h|_{g}\,d{\mathrm{vol}} + \int_{\partial \Omega} h \,d{\mathrm{vol}}_{n-1}\right).
\end{equation*}
\end{proof}

\section{Kato constant of manifolds with asymptotically non-negative Ricci curvature}\label{sec:asymp}

When $\mathrm{Ric}\geq 0$, Li and Yau proved the following estimate for the Green function, if it exists:
\begin{equation}\label{e:GreenEstimates}
G(x,y)\leq C(n)\int_{d(x,y)}^{+\infty} \frac{t}{\mathrm{vol}(B_t(y))}dt,
\end{equation}
where $C(n)$ is a purely dimensional positive constant, \cite{LY}.

Recall that $M$ is said to satisfy condition \textbf{(VC)} with respect to a pole $o\in M$ if there exists a constant $\xi>0$ such that for all $r$ and $x\in \partial B_r(o)$, $\mathrm{vol}(B_r(o))\leq \xi \mathrm{vol}(B_{r/2}(x))$.    In \cite[Theorem 1.9]{LiTam95}, Li and Tam proved that when the manifold satisfies condition \textbf{(VC)} and has negative part of the Ricci curvature which decays at least quadratically,
then the non-parabolicity is equivalent to the volume growth condition
\begin{equation}\label{e:largevolumegrowth}
\frac{t}{\mathrm{vol}(B_t(p))}\in L^1(+\infty).
\end{equation}
Moreover, in the same assumptions it is possible to obtain upper estimates \`a la Li-Yau on each non-parabolic end. Also,  they proved that the assumptions above are granted when the manifold has asymptotically non-negative sectional curvature and only one end. 
More precisely, they proved the following result (see also \cite[Theorem A.1]{Fa} where the expression of $C(K,n)$ is computed).
\begin{proposition}
    \label{lem:fan}
Let $M$ be a complete noncompact manifold of dimension $n$ with only one end,
and let $o\in M$ be a fixed point. Suppose that
$\int_1^\infty
\frac{t}
{\mathrm{vol}(B_t(p))} \,dt < \infty$. Suppose that the Ricci curvature of
$M$ satisfies $\operatorname{Ric}(y) \ge -\frac{(n-1)K}
{(1+r_o(y))^2}$ for some $K > 0$, where $r_o(y) = d(y, o)$. Suppose also that $M$ satisfies \textbf{(VC)}
for some $\xi>0$ with respect to the point $o$. Let $y\in M_r$, where $M_r$ is the union of the unbounded connected components of $M\setminus \bar B_r(o)$. Then
\[
G(o,y)\le C(K,n,\xi)\int_{r}^\infty\frac{t}{\mathrm{vol}(B_t(o))}\,dt,
\]
where $C(K,n,\xi)=K^{-n/2}\exp((1+\xi)\exp(C(n)(1+\sqrt{K}))).$
\end{proposition}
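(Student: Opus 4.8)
The plan is to follow the strategy of Li--Tam \cite{LiTam95} and Kasue \cite{Ka}, deriving the estimate from the representation $G(o,y)=\int_0^\infty H(o,y,t)\,dt$ of the minimal positive Green kernel together with a Gaussian-type upper bound
\[
H(o,y,t)\le \frac{C(n,K,\xi)}{\mathrm{vol}(B_{\sqrt t}(o))}\exp\!\Big(-\frac{d(o,y)^2}{C t}\Big),\qquad y\in M_r,\ t>0.
\]
The point to keep in mind throughout is that the decay rate $(1+r_o)^{-2}$ is exactly the \emph{borderline} one: on a ball of radius comparable to $t$ centered near the sphere $\partial B_t(o)$ the Ricci lower bound $-(n-1)K(1+r_o)^{-2}$ rescales to a uniformly bounded quantity, so the local geometric--analytic inequalities hold there with constants independent of $t$; and the role of \textbf{(VC)} is to transfer these estimates from that local scale to the global volume $\mathrm{vol}(B_t(o))$ appearing in the statement.

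Concretely, the first step is to fix $t\ge1$ and $x\in\partial B_t(o)$ and note that on $B_{t/2}(x)$ one has $\mathrm{Ric}\ge-(n-1)\kappa$ with $\kappa\doteq 4K t^{-2}$ and $\sqrt\kappa\cdot(t/2)=\sqrt K$; hence Bishop--Gromov, the Cheng--Yau gradient estimate and (for the parabolic version needed to produce the Gaussian factor) the Li--Yau differential Harnack inequality all hold on $B_{t/2}(x)$ with constants depending only on $n$ and $\sqrt K$. This gives a scale-invariant elliptic Harnack and mean-value inequality on balls $B_{t/4}(x)$, $x\in\partial B_t(o)$, and, integrated along a minimizing geodesic from $o$ to $y$ (where the effective curvature is $\sim K(1+r_o)^{-2}$, whose integral along the ray is $\lesssim K$), the desired heat kernel bound. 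The second step uses \textbf{(VC)}, i.e.\ $\mathrm{vol}(B_t(o))\le\xi\,\mathrm{vol}(B_{t/2}(x))$ for $x\in\partial B_t(o)$: combined with the local doubling of Step~1 and a Vitali covering of $\partial B_t(o)$, it upgrades everything to a global doubling property $\mathrm{vol}(B_{2t}(o))\le C(n,\sqrt K,\xi)\,\mathrm{vol}(B_t(o))$ for $t\ge t_0$, which is what makes the on-diagonal term $\mathrm{vol}(B_{\sqrt t}(o))^{-1}$ legitimate. Near $o$ one has only $\mathrm{Ric}\ge-(n-1)K$ on $B_1(o)$; this is the origin of the $K^{-n/2}$ factor in the final constant.

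The third step is the integration. Writing $d\doteq d(o,y)\ge r$ and splitting $G(o,y)=\int_0^{d^2}H\,dt+\int_{d^2}^\infty H\,dt$: on $[d^2,\infty)$ the exponential is harmless and, after the substitution $t=\tau^2$, $\int_{d^2}^\infty \mathrm{vol}(B_{\sqrt t}(o))^{-1}dt\le C\int_d^\infty \tau\,\mathrm{vol}(B_\tau(o))^{-1}d\tau$; on $[0,d^2]$ the reverse volume bound $\mathrm{vol}(B_{\sqrt t}(o))\ge c\,(\sqrt t/d)^{N}\mathrm{vol}(B_d(o))$ for $\sqrt t\le d$ (a consequence of the global doubling, with a ``dimension'' $N=N(n,\sqrt K,\xi)$) together with $\int_0^{d^2}t^{-N/2}e^{-cd^2/t}dt\le C(N)\,d^{2-N}$ gives $\int_0^{d^2}H\,dt\le C\,d^2\,\mathrm{vol}(B_d(o))^{-1}\le C\int_d^\infty\tau\,\mathrm{vol}(B_\tau(o))^{-1}d\tau$, again by doubling. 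Since $d\ge r$, the last integral is $\le\int_r^\infty\tau\,\mathrm{vol}(B_\tau(o))^{-1}d\tau$, and we are done. The single-end hypothesis is used to ensure $M_r$ is connected and that $G(o,\cdot)$ decays to $0$ along $M_r$ (so the heat-kernel representation and the comparisons on $M_r$ are legitimate), while the hypothesis $\int_1^\infty\tau\,\mathrm{vol}(B_\tau(o))^{-1}d\tau<\infty$ makes the right-hand side finite.

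The main obstacle is carrying out Steps~1--2 with honest control of the constants: one must check that no estimate secretly depends on $t$ (or on a diameter), which forces using only gradient/Harnack/volume inequalities whose constants depend on the Ricci bound through $\sqrt K\cdot(\text{radius})$ alone. The double-exponential shape $K^{-n/2}\exp((1+\xi)\exp(C(n)(1+\sqrt K)))$ then emerges because the scale-invariant Bishop--Gromov and Li--Yau--Harnack constants are already exponential in $\sqrt K$, and they get composed through the \textbf{(VC)} covering, which injects the extra $(1+\xi)$ into the exponent. An essentially equivalent elliptic route, closer to \cite{Ka}, replaces the heat kernel by a maximum-principle argument on $M_r$ with barrier $\Psi(r_o)$, $\Psi(s)\doteq\int_s^\infty\tau\,\mathrm{vol}(B_\tau(o))^{-1}d\tau$: using the Laplacian comparison $\Delta r_o\le(n-1)\phi'/\phi$ with $\phi''=K(1+s)^{-2}\phi$, $\phi(0)=0$, $\phi'(0)=1$, and the area estimate $\mathrm{Area}(\partial B_t(o))\le C\,t^{-1}\mathrm{vol}(B_t(o))$ from Step~2, one shows $C(n,\sqrt K,\xi)\,\Psi(r_o)$ is superharmonic on $M_r$; it dominates $G(o,\cdot)$ on $\partial B_r(o)$ by the unit-flux identity together with the Harnack inequality around the sphere; and both vanish at infinity, so the maximum principle on $M_r$ concludes.
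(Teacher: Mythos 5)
Your primary route --- a Gaussian heat-kernel upper bound
\[
H(o,y,t)\le \frac{C(n,K,\xi)}{\mathrm{vol}(B_{\sqrt t}(o))}\exp\Big(-\frac{d(o,y)^2}{C t}\Big)
\]
followed by integration in $t$ --- is the route Li--Yau use for $\mathrm{Ric}\ge 0$ in \cite{LY}, but it is \emph{not} Li--Tam's strategy in \cite{LiTam95}, and as stated it has a genuine gap. The rescaling observation (on $B_{t/2}(x)$, $x\in\partial B_t(o)$, the effective curvature rescales to $\sqrt K$) gives only \emph{local} scale-invariant parabolic estimates; upgrading these to a uniform global Gaussian off-diagonal bound for $H(o,\cdot,\cdot)$ requires a chaining or integrated-maximum-principle argument whose constants must be controlled uniformly across all scales, and under the present hypotheses (only $(\textbf{VC})$ and $\int_1^\infty t/\mathrm{vol}(B_t(o))\,dt<\infty$, with no positive asymptotic volume ratio) no such bound is available off the shelf --- proving it would be at least as hard as the Green-function estimate you want. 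This is precisely why Li--Tam avoid the heat kernel and argue elliptically: they cover dyadic annuli by balls using $(\textbf{VC})$, apply mean-value and gradient estimates on each piece (this is where the exponential-in-$\sqrt K$ constant enters), sum the oscillations across annuli, and close with the Dirichlet-energy identity $\int_{M_r}|\nabla G|^2\le 4\sup_{\partial M_r}G$.

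Your elliptic alternative in the last paragraph is much closer --- and closer to what Section~5 of this paper generalizes --- but it compresses the heart of the matter into the phrase ``Harnack inequality around the sphere.'' The entire difficulty, and the source of the doubly-exponential constant, is the chain-of-balls / oscillation argument on the annuli, controlled via $(\textbf{VC})$ and the one-end hypothesis; by comparison, the barrier $\Psi(r_o)$ and the area bound $\mathrm{Area}(\partial B_t(o))\le C\,t^{-1}\mathrm{vol}(B_t(o))$ are secondary, and in fact Li--Tam close with the energy estimate rather than a maximum-principle comparison. For the record, the paper does not re-prove this proposition --- it cites Li--Tam \cite{LiTam95}, with the explicit form of the constant taken from Fan \cite{Fa} --- and the refinement proved in Section~5 (Theorem~\ref{th:green estimate_abstract}) follows exactly the elliptic covering/oscillation/energy template just described, not the heat-kernel route.
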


The content of the following result is that if we assume asymptotically non-negative curvature together with $\textbf{(VC)}$ and positive asymptotic volume ratio, then the volumes of all the balls in $M$ are comparable to the Euclidean ones, that is to say, $(M, g)$ is $n$-Ahlfors regular. 

%

\begin{lemma}\label{lem:VC}
Let $(M^n, g)$, $n\geq3$, be a complete Riemannian manifold satisfying
\begin{equation}\label{e:vol 4.2}
\limsup_{r\to\infty}\frac{\mathrm{vol}(B_r(q))}{r^n}\geq \beta>0
\end{equation}
for some (hence any) $q\in M$. Let $o\in M$ be a fixed pole.
Suppose that \[\operatorname{Ric}(y) \ge -\frac{(n-1)K}
{(1+d(o,y))^\alpha}\] 
for some $K > 0$, $\alpha>2$. Suppose also that $M$ satisfies the condition $\textbf{(VC)}$ with constant $\xi>0$ with respect to the reference point $o$. Then, there exists a constant $v_0>1$ depending on $n,\alpha,\beta,\xi$ and $K$ such that for every $y\in M$ and $r>0$,
\[
v_0^{-1}r^n \le \mathrm{Vol}\left(B_r(y)\right)\leq v_0 r^n.
\]
\end{lemma}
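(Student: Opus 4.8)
The plan is to reduce the statement to two things: first the two‑sided Ahlfors bound \emph{at the pole $o$ itself}, and then a transfer of that bound to an arbitrary $y\in M$ by a case analysis on the size of $r$ relative to $\rho:=d(o,y)$. In the transfer, the condition \textbf{(VC)} is used precisely to carry a volume lower bound from $o$ to $y$ across the macroscopic scale $\rho$, while Bishop and Bishop--Gromov comparison, applied with the \emph{decaying} curvature bound (which is uniformly small deep inside $M$), handle all the smaller scales around $y$.

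\emph{Step 1: regularity at $o$.} Since $\alpha>2$, the function $\lambda(t):=K(1+t)^{-\alpha}$ is non‑increasing with $b_0:=\int_0^\infty t\lambda(t)\,dt<\infty$, so $\operatorname{Ric}\ge-(n-1)\lambda(d(o,\cdot))$ is exactly the asymptotically non‑negative Ricci condition based at $o$. Letting $\sigma$ solve $\sigma''=\lambda\sigma$, $\sigma(0)=0$, $\sigma'(0)=1$, a standard Gronwall/bootstrap argument using $b_0<\infty$ gives $t\le\sigma(t)\le C(\alpha,K)\,t$ for all $t>0$. The radial Bishop comparison theorem then yields $\mathrm{vol}(B_r(o))\le\omega_{n-1}\int_0^r\sigma(t)^{n-1}\,dt\le C_1(n,\alpha,K)\,r^n$, and the associated relative volume comparison gives $\mathrm{vol}(B_R(o))/\mathrm{vol}(B_r(o))\le C(n,\alpha,K)(R/r)^n$ for $R\ge r$. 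As the asymptotic volume ratio is base‑point independent (via $B_{R-d}(o)\subseteq B_R(q)\subseteq B_{R+d}(o)$, $d=d(o,q)$), hypothesis \eqref{e:vol 4.2} holds at $o$; letting $R\to\infty$ in the relative comparison along a maximizing sequence gives $\mathrm{vol}(B_r(o))\ge c_1(n,\alpha,K,\beta)\,r^n$ for all $r>0$. This settles the case $y=o$.

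\emph{Step 2: from $o$ to $y$.} Write $\rho:=d(o,y)>0$. If $r\ge2\rho$, the inclusions $B_{r/2}(o)\subseteq B_{r-\rho}(o)\subseteq B_r(y)\subseteq B_{r+\rho}(o)\subseteq B_{2r}(o)$ reduce both bounds to Step 1. If $\rho/2\le r\le2\rho$, then $B_r(y)\subseteq B_{3\rho}(o)\subseteq B_{6r}(o)$ gives the upper bound via Step 1, while \textbf{(VC)} applied at the point $y\in\partial B_\rho(o)$ gives $\mathrm{vol}(B_{\rho/2}(y))\ge\xi^{-1}\mathrm{vol}(B_\rho(o))\ge\xi^{-1}c_1\rho^n$, whence $\mathrm{vol}(B_r(y))\ge\mathrm{vol}(B_{\rho/2}(y))\ge\xi^{-1}c_1(r/2)^n$ using $\rho\ge r/2$. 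The remaining and most delicate regime is $0<r<\rho/2$: every $z\in B_{\rho/2}(y)$ has $d(o,z)\ge\rho/2$, so $\operatorname{Ric}\ge-(n-1)\kappa$ on $B_{\rho/2}(y)$ with $\kappa:=K(1+\rho/2)^{-\alpha}$, and the decisive scale‑invariant bound $\kappa(\rho/2)^2\le K$ holds since $(1+u)^\alpha\ge(1+u)^2\ge u^2$ for $u>0$ when $\alpha\ge2$. Writing $V_{-\kappa}(t)$ for the volume of a ball of radius $t$ in the space form of curvature $-\kappa$, one has $V_{-\kappa}(t)=\kappa^{-n/2}V_{-1}(\sqrt\kappa\,t)$, and since $\sqrt\kappa\,t\le\sqrt\kappa\,\rho/2\le\sqrt K$ for $t\le\rho/2$, the elementary bounds $C(n,K)^{-1}s^n\le V_{-1}(s)\le C(n,K)\,s^n$ on $[0,\sqrt K]$ give $C(n,K)^{-1}t^n\le V_{-\kappa}(t)\le C(n,K)\,t^n$ for $0\le t\le\rho/2$. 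Bishop's theorem then yields the upper bound $\mathrm{vol}(B_r(y))\le V_{-\kappa}(r)\le C(n,K)\,r^n$, while Bishop--Gromov relative volume comparison on $B_{\rho/2}(y)$, together with \textbf{(VC)} as above, yields
\[
\mathrm{vol}(B_r(y))\ \ge\ \frac{V_{-\kappa}(r)}{V_{-\kappa}(\rho/2)}\,\mathrm{vol}(B_{\rho/2}(y))\ \ge\ C(n,K)^{-2}\,\frac{r^n}{(\rho/2)^n}\cdot\xi^{-1}c_1\rho^n\ =\ c(n,\alpha,K,\beta,\xi)\,r^n.
\]
Taking $v_0$ to be the largest of the finitely many upper constants and of the reciprocals of the finitely many lower constants produced above finishes the proof.

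\emph{Where the difficulty lies.} The crux is the regime $r\ll\rho$, where one cannot afford to transfer a volume lower bound from $o$ by Bishop--Gromov on a large ball such as $B_{2\rho}(y)$: on such a ball the only available curvature bound is the non‑decaying $\operatorname{Ric}\ge-(n-1)K$, and $K(2\rho)^2\to\infty$, so the comparison loses a factor exponential in $\rho$ and destroys the uniformity. Condition \textbf{(VC)} is exactly what moves the lower bound down to the scale $\rho/2$ with no loss; and only then, because the curvature actually felt on $B_{\rho/2}(y)$ is small enough that $\kappa(\rho/2)^2\le K$, can the remaining descent to scale $r$ be carried out by scale‑invariant — hence lossless — Bishop/Bishop--Gromov comparison. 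A secondary but genuine point is the Gronwall bound $\sigma(t)\le C(\alpha,K)\,t$ for arbitrary finite $b_0$ used in Step 1.
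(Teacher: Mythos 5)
Your proposal is correct, and its overall architecture mirrors the paper's: a two‑sided Ahlfors bound at the pole $o$ first, then a transfer to arbitrary $y$ via the same three‑way case split on $r$ versus $\rho=d(o,y)$, with \textbf{(VC)} invoked precisely to push the lower bound from $o$ down to scale $\rho/2$ around $y$. The genuine difference is in how the final descent from scale $\rho/2$ to scale $r$ is executed. The paper cites a Bishop--Gromov comparison for asymptotically non‑negative Ricci curvature due to Bazanfar\'e, valid on balls $B_R(x)$ with $R<d(o,x)$, which produces a ratio bound $\mathrm{vol}(B_R(x))/\mathrm{vol}(B_r(x))\le e^{(n-1)b_0}(R/r)^n$; you instead observe that on $B_{\rho/2}(y)$ the \emph{constant} curvature lower bound $-\kappa$, $\kappa=K(1+\rho/2)^{-\alpha}$, is already scale‑invariantly controlled since $\kappa(\rho/2)^2\le K$ for $\alpha\ge 2$, and then run ordinary constant‑curvature Bishop and Bishop--Gromov. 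Your route is more self‑contained and elementary, and it sidesteps the need to track the range of validity of Bazanfar\'e's lemma (which, as the paper itself notes in Remark~\ref{rmk:bazan}, has to be restricted to $R<d(o,x)$; your argument bypasses that subtlety entirely because the ball $B_{\rho/2}(y)$ is automatically disjoint from a neighborhood of $o$). The Gronwall estimate $t\le\sigma(t)\le e^{b_0}t$ that you use in Step~1 is the standard one, obtained from $(\log\sigma')'\le t\lambda(t)$ via the convexity inequality $\sigma(t)\le t\sigma'(t)$, and is exactly the content of the Bazanfar\'e estimate when applied at the pole. Altogether this is a valid and slightly more robust proof of the same lemma.
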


\begin{proof}
Set $\frac{K}
{(1+d(o,y))^\alpha}\doteq\lambda(d(o,y))$ and let 
$b_0=b_0(K,\alpha)\doteq\int_0^\infty t\lambda(t)<\infty$. Set $\delta_x=d(x,o)$.
A version of the Bishop-Gromov inequality ensures that 
\begin{equation}\label{e:bazan}
\frac{\operatorname{Vol}(B_R(x))}{\operatorname{Vol}(B_r(x))}\leq e^{(n-1)b_0}\left(\frac{R}{r}\right)^n
\end{equation}
for all point $x\in M$ and all $0<r<R<\delta_x$, and for all $0<r<R<\infty$ when $x=o$; see \cite[Lemma 1]{Baz-RMC}. In particular, for all $r>0$,
\[
\frac{\operatorname{Vol}(B_r(o))}{r^n}\geq \limsup_{R\to\infty}e^{-(n-1)b_0}\frac{\operatorname{Vol}(B_R(o))}{R^n} \ge e^{-(n-1)b_0}\beta.
\]
     We first prove the lower bound. If $r\le\delta_x/2$, then 
     \begin{align*}
              \mathrm{vol}(B_r(x))&\ge C(n,b_0)\left(\frac{2r}{\delta_x}\right)^n\mathrm{vol}(B_{\delta_x/2}(x))\ge C(n,b_0,\xi)\frac{\mathrm{vol}(B_{\delta_x}(o))     
     }{\delta_x^n} r^n  \ge C(n,b_0,\xi)\beta r^n,
          \end{align*}
where $R>\delta_x$ and the last inequality is obtained as $R\to\infty$. If $\delta_x/2<r\le 2\delta_x$, then 
     \[
     \mathrm{vol}(B_r(x))\ge\mathrm{vol}(B_{\delta_x/2}(x))\ge
     \xi
     \mathrm{vol}(B_{\delta_x}(o))\ge  C(n,b_0,\xi)\beta\delta_x^n\ge C(n,b_0,\xi)\beta r^n.
     \]
     If $r> 2\delta_x$, then  
     \[
     \mathrm{vol}(B_r(x))\ge\mathrm{vol}(B_{r-\delta_x}(o))\ge
C(n,b_0)\beta
    (r-\delta_x)^n\ge C(n,b_0)\beta r^n.
     \]
Concerning the upper bound, if $r\le \delta_x$, then
\[
\mathrm{vol}(B_r(x))\le C(n,b_0)\lim_{s\to 0} \frac{\mathrm{vol}(B_s(x))}{s^n} r^n = C(n,b_0) r^n,
\]
while if $r>\delta_x$,
\[
\mathrm{vol}(B_r(x))\le \mathrm{vol}(B_{2r}(o))\le C(n,b_0) r^{n}.
\]
\end{proof}

\begin{remark}\label{rmk:bazan}
\rm{In a previous version of this manuscript we stated a stronger version of Lemma \ref{lem:VC} (and thus of Theorem \ref{coro:asymp}) which did not assume condition \textbf{(VC)}. This was based on Theorem 1 of \cite{Baz-RMC}, which affirms that \eqref{e:bazan} holds for any $R>r>0$. As remarked in \cite[Section A.2]{CDM},
an inspection of the proof of \cite[Theorem 1]{Baz-RMC} shows that such a comparison is true for $0<r<R<\delta_x$, while for larger $R>r$ an additional polynomial term $\delta_x^{n-1}$ appears.
}
\end{remark}

In the assumptions of Lemma \ref{lem:VC}, also Proposition \ref{lem:fan} applies, and we obtain that for all $y\in M_r$,
\begin{equation}\label{e:GreenFan}
    G(o,y)\le C(n,K,\alpha,\beta,\xi) d(o,y)^{2-n}.
\end{equation}
Now, we would like to use estimates for the Green function to bound the elliptic Kato constant 
\begin{equation}\label{e:kinfty_sec4}
k_\infty\doteq k_\infty(\mathrm{Ric}_-)=\sup_{x\in M}\int_M G(x,y)\mathrm{Ric}_-(y)\,d\mathrm{vol}(y).
\end{equation}
The problem is that the machinery introduced by Li-Tam gives estimates of the Green function $G(o, \cdot)$ centered at the pole $o$ of $M$, i.e. the point with respect to which the curvature asymptotic condition is assumed. It is straightforward to show that if
$\operatorname{Ric}(y) \ge -(n-1)K(1+d(o,y))^{-2}$, then
\begin{align}\label{e:K tilde}
\operatorname{Ric}(y) \ge  -\frac{(n-1)K}
{(1+d(y,p))^2}\frac{(1+d(y,p))^2}
{(1+d(y,o))^2}
\ge -\frac{(n-1)\tilde K}
{(1+d(y,p))^2}
\end{align}
with $\tilde K=2K\left(1+d(o,p)\right)^2$.
In particular, assuming also \eqref{e:vol 4.2}, the integral in
\eqref{e:kinfty_sec4} converges \footnote{up to deal with points in the bounded connected components of $M\setminus \bar B_r(p)$; see the proof of Theorem \ref{th:green estimate} in Section \ref{Sect:GreenEstimates}.} 
and the elliptic Kato constant is well defined, although possibly infinite. However, the dependence on $K$ of the constant in \eqref{e:GreenFan} is doubly exponential (see \cite[Theorem A.1]{Fa}), so that to conclude the smallness of $k_\infty$, apparently, a very fast doubly exponential decay of the curvature is needed. 

To overcome such an unsatisfactory requirement, in this paper we follow a different approach. Namely, we first recall that the curvature assumption
\begin{equation}\label{e:asymp bound sec4}\operatorname{Ric}(y) \ge -\frac{(n-1)K}{1+d(o,y)^{2}}
\end{equation} 
also imply that 
\begin{align}\label{e:asymp translated}
\operatorname{Ric}(y) \ge  -\frac{(n-1)K}
{1+(d(y,p))-d(o,p))^2}
\end{align}
for all $y\in M$, with $o$ the pole of the manifold, and $p$ the point with respect to which we want to estimate the Green function.
One of the main ideas of Li-Tam's proof is roughly the following. The curvature bound \eqref{e:asymp bound sec4} implies that any annular neighborhood about $\partial B_R(o)$ of width $\asymp R$, can be covered by a uniform (with respect to $R$) number of balls of radius $\rho\doteq R/4$. Denoting by $\theta$ the infimum of the curvature on one of these balls, one has that $\theta \cdot \rho^2$ is uniformly bounded. Accordingly, Harnack and mean values inequalities hold on such balls, with constants independent of $R$. A tricky covering argument thus permits to estimate $G(x,o)$ in terms of $G(y,o)$ for any couple of point $x,y\in M_r$. On non-parabolic ends one can let $y$ escape to infinity in such a way that $G(y,o)\to 0$. This permits to deduce an effective upper bound on $G(x,o)$.  
Suppose that we have instead the assumption \eqref{e:asymp translated}, with $p$ playing the role of the new reference point (in order to deduce an estimate for $G(p,x)$), and $d(o,p)$ interpreted as a parameter. Outside the annulus $A\doteq B_{6 d_{(o,p)}}(p)\setminus B_{d_{(o,p)}/6}(p)$, the curvature bound \eqref{e:asymp translated} implies a bound of the same form of \eqref{e:asymp bound sec4} up to a multiplicative numerical constant, i.e.
\begin{equation*}\operatorname{Ric}(y) \ge -\frac{C(n-1)K}{1+d(p,y)^{2}}.
\end{equation*} 
Accordingly, in this region one can recover the same local estimates as in Li-Tam. Conversely, in the annulus $A$ one can only assume a constant lower bound on the curvature, hence a constant value of $\theta$ independent of $r$. Thus, the Harnack and mean value inequalities hold on ball of constant radius. Obviously, the number of balls which are necessary to cover $A$ increases with the width of the annulus, i.e. with $d(o,p)$. However, the dependence is polynomial and not doubly exponential. The proof of the following theorem is rather technical and involved, and it is thus postponed to Section \ref{Sect:GreenEstimates}.

\begin{theorem}\label{th:green estimate}
Let $(M^n, g)$, $n\geq3$, be a complete Riemannian manifold satisfying
\[
\limsup_{r\to\infty}\frac{\mathrm{vol}(B_r(q))}{r^n}\geq \beta>0
\]
for some (hence any) $q\in M$.
Let $\alpha>2$ and assume that for all $x\in M$
\[
\mathrm{Ric}(x)\geq -\frac{(n-1)K}{1+d(o,x)^\alpha},\] 
where $o\in M$ is some fixed reference point. Suppose also that $M$ satisfies the condition $\textbf{(VC)}$ with constant $\xi>0$ with respect to the reference point $o$. Fix $p\in M$ and let $x\in \partial B_{r}(p)$ for some $r\leq 1$. Then
\begin{equation}\label{e:Green}
    \ G(x,p)\leq C(n,\alpha,K,\beta,\xi)\max\left\{1,d(o,p)^{3n-2}\right\}r^{2-n}.
\end{equation}
\end{theorem}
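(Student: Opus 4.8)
The plan is to adapt the Li--Tam covering scheme to the reference point $p$, tracking how the constants degenerate in $d_0\doteq d(o,p)$; since for $d_0$ bounded the statement follows from the classical estimates of Li--Tam and Kasue (Proposition \ref{lem:fan}), we may assume $d_0\ge 1$ large. Two structural facts will be used throughout. First, Lemma \ref{lem:VC} (in case (b), after invoking the Li--Tam fact, recalled above, that a one-ended manifold of asymptotically non-negative sectional curvature satisfies \textbf{(VC)}) gives the uniform Ahlfors $n$-regularity $v_0^{-1}t^n\le \mathrm{vol}(B_t(y))\le v_0 t^n$ for all $y\in M$ and $t>0$; in particular $M$ is non-parabolic, the minimal positive Green kernel $G$ exists, and $\int_s^\infty t\,\mathrm{vol}(B_t(p))^{-1}\,dt\asymp s^{2-n}$ for $n\ge 3$. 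Second, since $d(o,y)\ge |d(p,y)-d_0|$ and $\alpha>2$, the hypothesis translates into the shifted bound $\mathrm{Ric}(y)\ge -(n-1)\Lambda(d(p,y))$ with $\Lambda(t)\doteq C(\alpha)K\,(1+|t-d_0|^2)^{-1}$, which outside the annulus $A\doteq B_{6d_0}(p)\setminus B_{d_0/6}(p)$ is the genuinely decaying Li--Yau-type bound $\mathrm{Ric}(y)\ge -(n-1)C(\alpha)K\,(1+d(p,y)^2)^{-1}$, while on $A$ it degenerates to the constant bound $\mathrm{Ric}\ge -(n-1)K$.

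Next I would reduce the estimate to a single quantity. Since $G(\cdot,p)$ is positive, harmonic on $M\setminus\overline{B_r(p)}$ and vanishes at infinity, the maximum principle gives $\sup_{M\setminus B_r(p)}G(\cdot,p)=\sup_{\partial B_r(p)}G(\cdot,p)=:S_r$, so it suffices to bound $S_r$. On the inner ball $B_{d_0/6}(p)$ one has, thanks to $\alpha>2$, that $\mathrm{Ric}\ge -\kappa_0$ with $\kappa_0 d_0^2\le C(\alpha)K$, i.e. $B_{d_0/6}(p)$ is almost flat at its own scale, so Harnack and mean-value inequalities hold there at scale $d_0$ and $G(\cdot,p)$ is comparable to the Euclidean Green kernel: splitting $G(\cdot,p)$ on $B_{d_0/6}(p)$ into the Dirichlet Green kernel of the ball (bounded above by $C(n,\alpha,K)\,d(\cdot,p)^{2-n}$) plus a harmonic correction (bounded by its boundary values) yields $S_r\le C(n,\alpha,K)\,r^{2-n}+\sup_{\partial B_{d_0/6}(p)}G(\cdot,p)$, and the last supremum equals $M_{\mathrm{in}}\doteq\sup_{M\setminus B_{d_0/6}(p)}G(\cdot,p)$ again by the maximum principle. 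Since $r\le 1$, the theorem reduces to the single estimate $M_{\mathrm{in}}\le C(n,\alpha,K,\beta,\xi)\,d_0^{3n-2}$.

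To bound $M_{\mathrm{in}}$ I would run the Li--Tam covering argument with pole $p$, patched across $A$. On the good outer region $M\setminus B_{6d_0}(p)$ the shifted bound is of Li--Yau type at every scale, so each dyadic annulus $\{R\le d(\cdot,p)\le 2R\}$, $R\ge 6d_0$, is covered by a bounded number of balls of radius $R/4$ on which scale-invariant Harnack and mean-value inequalities hold, and the classical argument runs verbatim there. On $A$ one only has $\mathrm{Ric}\ge -(n-1)K$, so Harnack and mean-value inequalities are available only on balls of the fixed radius $\rho_0\asymp\min\{1,K^{-1/2}\}$, and $A$ must be covered by $\asymp (d_0/\rho_0)^n$ such balls; the estimate for $G(\cdot,p)$ is then propagated across this cover, from the inner edge $\partial B_{6d_0}(p)$ of the good region towards $\partial B_{d_0/6}(p)$, by a refinement of Li--Tam's ``tricky covering argument'', and matched with the nearly flat comparison inside $B_{d_0/6}(p)$ recorded above. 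Tallying the several polynomial losses incurred in covering $A$ and in bridging the estimate across it then produces the exponent $3n-2$.

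The heart of the proof, and the main obstacle, is to show that the propagation across $A$ costs only a \emph{polynomial} factor in $d_0$, whereas a naive iteration of the Harnack inequality through the $\asymp d_0^n$ fixed-radius balls (radially across $A$, or around a sphere of radius $\asymp d_0$) would cost a factor \emph{exponential} in $d_0$ --- the same obstruction that makes the straightforward application of Proposition \ref{lem:fan} with the translated constant $\widetilde K\asymp Kd_0^2$ useless. The way around it is to propagate flux/mean-value information rather than pointwise Harnack estimates: one exploits that $\int_{\partial B_\rho(p)}\partial_\nu G(\cdot,p)\,d\sigma=-1$ for every $\rho>0$ and that all balls entering the cover have comparable volume (by Ahlfors regularity), so that crossing $A$ loses only ratios of volumes, which are polynomial in $d_0$. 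This bookkeeping is carried out once and for all in the abstract Theorem \ref{th:green estimate_abstract}, stated for curvature bounds $-(n-1)\Lambda(d(\cdot,p))$ with $\Lambda$ of Li--Yau type off a single annulus and merely bounded on it, the constant recording the width and height of that annulus; the present statement is the instance $\Lambda(t)=C(\alpha)K\,(1+|t-d_0|^2)^{-1}$, and cases (a) and (b) enter only through the verification of its hypotheses (Lemma \ref{lem:VC} directly, respectively after the Li--Tam reduction of ``one end plus asymptotically non-negative sectional curvature'' to \textbf{(VC)}).
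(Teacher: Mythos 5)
Your proposal captures the paper's strategy accurately: translate the curvature hypothesis to a shifted bound $\mathrm{Ric}\ge -(n-1)K/(1+(d(p,\cdot)-\delta)^2)$ with $\delta=d(o,p)$, use Lemma \ref{lem:VC} (after reducing case (b) to \textbf{(VC)} via Li--Tam) to get uniform Ahlfors $n$-regularity, and then run a Li--Tam-type covering argument with pole $p$, where the dyadic annuli away from $A\doteq B_{6\delta}(p)\setminus B_{\delta/6}(p)$ behave as in the unshifted Li--Tam setting while $A$ is covered by $\asymp \delta^n$ unit-scale balls; you also correctly isolate the central obstruction (a naive Harnack chain across $A$ would lose an exponential factor in $\delta$, as would applying Proposition \ref{lem:fan} with the translated constant $\tilde K\asymp K\delta^2$). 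This is exactly what the paper does via the abstract Theorem \ref{th:green estimate_abstract}.

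Two remarks on where your outline diverges in emphasis from the actual argument. First, the mechanism by which crossing $A$ costs only a polynomial factor is not really flux: the paper uses the observation that \emph{oscillations add} along a chain (Lemma 2.1 of Li--Tam), with each oscillation controlled by a local gradient mean-value inequality (Bochner plus Corollary 14.8 of Li's book), so $\asymp\delta^n$ balls contribute a sum, not a product; the flux identity enters only once at the end, through the energy bound $\int_{M_r}|\nabla G|^2\le 4\sup_{\partial M_r}G$, which closes the inequality $G\le C\,\delta^{3n/2-1}r^{1-n/2}(\int|\nabla G|^2)^{1/2}$ and doubles the exponent to $3n-2$. Second, your ``inner-ball'' reduction --- splitting $G$ on $B_{\delta/6}(p)$ into the Dirichlet kernel of the ball (estimated using that $\kappa_0\delta^2$ is bounded) plus a harmonic correction, so that everything reduces to bounding $\sup_{\partial B_{\delta/6}(p)}G$ --- is a mild organizational variant not in the paper: the paper instead runs the covering argument starting directly at the small scale $r$, and then extends the resulting bound from the unbounded components $M_r$ to all of $\partial B_r(p)$ by a Harnack chain of bounded length (independent of $\delta$). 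Your variant is plausible, but note that it quietly assumes an estimate for the Dirichlet Green function of $B_{\delta/6}(p)$ of the form $C\,d(\cdot,p)^{2-n}$ under $\kappa_0\delta^2\lesssim K$, which is true but requires its own justification, and you would still have to address the bounded components of $M\setminus B_{\delta/6}(p)$ when passing from $M_{\delta/6}$ to $\partial B_{\delta/6}(p)$. Finally, for $\delta<1$ the paper does not appeal to Proposition \ref{lem:fan}; it simply absorbs $\delta$ into the constant via $1+(d(p,x)-1)^2\le 2\bigl(1+(d(p,x)-\delta)^2\bigr)$ and reapplies the case $\delta\ge 1$ with $K$ replaced by $2K$.
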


\begin{remark}\label{rmk:green estimates}
Note that condition \textbf{(VC)} is authomatically satisfied when $M$ has only one end and asymptotically non-negative sectional curvature, i.e. $\mathrm{Sect}_x\ge \lambda(d(x,o))$, with $b_{0}\doteq\int_0^\infty s\lambda(s)\,ds<\infty$; see \cite[Proposition 3.8]{LiTam95}. However, under this stronger set of assumptions, the estimate \eqref{e:Green} can be improved. Indeed, as a consequence of \cite[Corollary 7.14, Lemma 2.7]{GSC} one can actually prove that for any $p,x\in M$ it holds that
\[
\ \ G(x,p)\leq C(n,\alpha,K,\beta,b_{0})d(x,p)^{2-n}.
\]
We are grateful to the anonymous referee for bringing the reference \cite{GSC} to our attention.

For later purposes, let us also remark that the conclusion of Lemma \ref{lem:VC} holds true also in this setting without further assumptions on the Ricci curvature, as it is clear from the proof of that lemma.
\end{remark}

In the last part of this section, we are going to use Theorem \ref{th:green estimate} to estimate $k_\infty$.
\medskip

\begin{proof}[Proof of Theorem \ref{coro:asymp}] 

Assume first that we are in the assumptions (a). From Theorem \ref{th:green estimate}, we know that  
\[
G(x,p)\leq C(n,\alpha,K,\beta,\xi) \max\left\{1,d(o,p)^{3n-2}\right\}d(x,p)^{2-n}.\]
Set \[B_{1,p}= \left\{x\in M\ :\ d(p,x)\le \frac{1}{2}d(o,p)\right\},\qquad B_{2,p}= \left\{x\in M\ :\ d(p,x)\le 2d(o,p)\right\}.\]
Then 
\[
k_\infty\le (n-1)\,K\,C \sup_{p\in M}\left[ \int_{B_{1,p}} \mathcal F(x)\,d\mathrm{vol}(x) + \int_{B_{2,p}\setminus B_{1,p}} \mathcal F(x)\,d\mathrm{vol}(x) +\int_{M\setminus B_{2,p}} \mathcal F(x)\,d\mathrm{vol}(x) 
\right],\]
where
\[\mathcal F(x)=\frac{\max\left\{1,d(o,p)^{3n-2}\right\}}{d(x,p)^{n-2}}\frac{1}{1+d(o,x)^{\alpha}}.\]
We are going to estimate the three integral above separately.
First, since $d(x,o)\ge d(p,o)/2$ on $B_{1,p}$, we have
\[
\int_{B_{1,p}} \mathcal F(x)\,d\mathrm{vol}(x) \le 
\frac{\max\left\{1,d(o,p)^{3n-2}\right\}}{1+(d(o,p)/2)^{\alpha}}\int_{B_{1,p}} \frac{1}{d(x,p)^{n-2}}
\,d\mathrm{vol}(x). 
\]
Integrating by part and using Lemma \ref{lem:VC}, we compute
\begin{align*}
\int_{B_{1,p}} \frac{1}{d(x,p)^{n-2}}
\,d\mathrm{vol}(x) &= \int_0^{d(o,p)/2} A(r)r^{2-n}\,dr \\
&=   V\left(\frac{d(o,p)}{2}\right)\,\left(\frac{d(o,p)}{2}\right)^{2-n} + (n-2) \int_0^{d(o,p)/2} {V}(r)r^{1-n}\,dr\\
&\le \frac{n}{8}v_0\,d(o,p)^2,
\end{align*}
where $V(r)$ and $A(r)$ denotes the volume $B_r(p)$ and the area of its boundary, respectively. Hence,
\[
\int_{B_{1,p}} \mathcal F(x)\,d\mathrm{vol}(x) \le 
\frac{\max\left\{1,d(o,p)^{3n-2}\right\}}{1+(d(o,p)/2)^{\alpha}}\frac{n}{8}v_0\,d(o,p)^2 \le C(n,v_0)
\]
as soon as $\alpha \ge 3n$.
Similarly, as $d(x,p)\ge d(o,p)/2$ on $B_{2,p}\setminus B_{1,p}$, and this latter is contained in $B_{3 d(o,p)}(o)$, we get
\[
\int_{B_{2,p}\setminus B_{1,p}} \mathcal F(x)\,d\mathrm{vol}(x) \le 2^{n-2}\frac{\max\left\{1,d(o,p)^{3n-2}\right\}}{d(o,p)^{n-2}}\int_{B_{3d(o,p)}(o)} \frac{1}{1+d(o,x)^{\alpha}}
\,d\mathrm{vol}(x)\le C(n,v_0)
\]
as soon as $\alpha \ge 3n$, because
\[
\int_{B_{3d(o,p)}(o)} \frac{1}{1+d(o,x)^{\alpha}}d\mathrm{vol}(x)\le C(n,v_0)\begin{cases}
d(o,p)^n&\text{if } d(o,p)\le 1/3,\\
d(o,p)^{n-\alpha}&\text{if } d(o,p)>1/3.    
\end{cases}
\]
Finally, as $d(x,o)\ge d(x,p)/2$ and $d(o,p)\le d(p,x)/2$ on $M\setminus B_{2,p}$, and this latter is nothing but $M\setminus B_{2 d(o,p)}(p)$, we get
\begin{align*}
    \int_{M\setminus B_{2,p}} \mathcal F(x)\,d\mathrm{vol}(x) &\le 
\max\left\{1,d(o,p)^{3n-2}\right\}\int_{M\setminus B_{2d(o,p)}(p)} \frac{2^\alpha}{d(p,x)^{n-2}(1+d(p,x)^{\alpha})}
\,d\mathrm{vol}(x)\\
& =
2^\alpha\int_{2d(o,p)}^{\infty} \frac{\max\left\{1,r^{3n-2}\right\}\, A(r)}{r^{n-2}(1+r^{\alpha})}\,dr\\
&\le 2^\alpha\int_{0}^{1} \frac{A(r)}{r^{n-2}(1+r^{\alpha})}\,dr+2^\alpha\int_{1}^{\infty} \frac{r^{3n-2}\, A(r)}{r^{n-2}(1+r^{\alpha})}\,dr.
\end{align*}
Integrating by part as in the first case, we finally get that the right hand side above has an upper bound which depends only on $\alpha$, $n$ and $v_0$, as soon as $\alpha > 3n$. This concludes the proof in the assumptions (a).

Suppose now that we are in assumptions (b). We observe preliminarily that in this case one has
 \[
\sup_{r>0} r^2\lambda(r) \le 2b_0\quad\text{and}\quad \mathrm{Ric}(x)\geq -\frac{(n-1)2b_0}{d(o,x)^2},\]
see \cite[Lemma 1.1]{Abr}. Moreover, $G(x,p)\leq C(n,\alpha,K,\beta,b_{0}) d(x,p)^{2-n}$ by Remark \ref{rmk:green estimates}. We have thus to estimate 
\[
 \int_{B_{1,p}} \mathcal F(x)\,d\mathrm{vol}(x) + \int_{B_{2,p}\setminus B_{1,p}} \mathcal F(x)\,d\mathrm{vol}(x) +\int_{M\setminus B_{2,p}} \frac{\lambda(d(o,x))}{d(x,p)^{n-2}}\,d\mathrm{vol}(x) 
,\]
where now
\[\mathcal F(x)=\frac{1}{d(x,p)^{n-2}}\frac{1}{d(o,x)^{2}}.\]
Computing as above the first two integrals can be upper bounded by a constant which depends only on $n$ and $v_0$, where $v_0$ is the constant appearing in Lemma \ref{lem:VC}; see also Remark \ref{rmk:green estimates}. Concerning the last integral, recall that $d(x,o)\ge d(x,p)/2$ and $d(o,p)\le d(p,x)/2$ on $M\setminus B_{2,p}$. Then, using that $\lambda$ and $r\mapsto \lambda(r/2)r^{2-n}$ are nonincreasing, we have 
\begin{align*}
    \int_{M\setminus B_{2,p}} \frac{\lambda(d(o,x))}{d(x,p)^{n-2}}\,d\mathrm{vol}(x) 
    &\le  \int_{M\setminus B_{2,p}}
\frac{\lambda(d(p,x)/2)}{d(x,p)^{n-2}}\,d\mathrm{vol}(x)\\
    &\le \int_{0}^\infty \frac{\lambda(r/2)\,A(r)}{r^{n-2}}\,dr\\
    &\le \lim_{r\to\infty}\frac{\lambda(r/2)\,V(r)}{r^{n-2}} - \int_0^\infty V(r)\left(\frac{\lambda(r/2)}{r^{n-2}}\right)'\,dr\\
    &\le 8\,b_0\,v_0 - v_0 \int_0^\infty r^n\left(\frac{\lambda(r/2)}{r^{n-2}}\right)'\,dr \\
    &\le 8\,b_0\,v_0 + v_0 \int_0^\infty nr^{n-1}\frac{\lambda(r/2)}{r^{n-2}}\,dr\\
    &\le 8\,b_0\,v_0 + 8\,v_0\,n\,b_0.
\end{align*}

\end{proof}

\begin{remark}
Given a measurable function $h$ on $M$, consider the function $\phi_h(x):M\to [0,+\infty)\cup\{+\infty\}$ defined by
\[
    \phi_h(x)\doteq\int_M G(x,y)h(y)\,d\mathrm{vol}(y),
\]
and note that when $h$ is regular enough (e.g. $h\in C^\infty_c(M)$) then $\phi_h$ solves
\begin{equation}\label{e:poisson pb}
    - \Delta \phi_h (x)=  h(x).
\end{equation}
The argument above proves that in the assumptions of Theorem \ref{th:green estimate} the Poisson problem \eqref{e:poisson pb} has a bounded solution whenever $h\in C^\infty(M)$
    satisfies $
|h(x)|=o(d(o,x)^{-\alpha})$ for $\alpha>3n$.
Similarly, if $M$ has only one end and asymptotically nonnegative curvature, then \eqref{e:poisson pb} has a bounded solution if $|h(x)|\le \lambda(d(x,o))$ where $\lambda$ is a non-increasing function satisfying \eqref{e:asymp curv}.
The search for conditions ensuring the existence and the boundedness of solutions to the Poisson problem on complete non-compact manifolds is a quite active field (see for instance \cite{MSW,CMP,Fa} and references therein). As a byproduct, our work gives also a contribution in that direction.
\end{remark}

\section{Estimates on the Green function on manifolds with asymptotically non-negative Ricci curvature}\label{Sect:GreenEstimates}

In the present section we 
prove the following result, from which Theorem \ref{th:green estimate} can be easily deduced.

\begin{theorem}\label{th:green estimate_abstract}
Let $(M^n, g)$, $n\geq3$, be a complete Riemannian manifold satisfying
\begin{equation}\label{e:euclvol}
\ v_{0} r^n\leq \mathrm{vol}(B_{r}(x))\leq V_{0}r^{n}\,\qquad \forall\,x\in M,\,\forall r>0.
\end{equation}
Assume that \begin{equation}\label{e:Ric delta}
\mathrm{Ric}\geq -\frac{(n-1)K}{1+(d(p,x)-\delta)^2},\qquad\forall\, x\in M\end{equation} for some fixed reference point $p\in M$ and some $\delta>0$. Let $x\in M\setminus B_{r}(p)$ for some $r\leq 1$. Then
\begin{equation}
\label{e:Green abstract}   
\ G(p,x)\leq C(n,K,v_0,V_0)\max\left\{1,\delta^{3n-2}\right\}r^{2-n}.
\end{equation}
\end{theorem}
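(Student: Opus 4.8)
The strategy is to adapt the Li--Tam--Kasue covering/Harnack machinery to a ``translated'' curvature bound, carefully quantifying how the bad annular region $A\doteq B_{6\delta}(p)\setminus B_{\delta/6}(p)$ contributes. First I would fix the reference point $p$ and introduce the radial scales $R$, with the dichotomy: (i) $R\gtrsim\delta$, the ``good'' regime, where \eqref{e:Ric delta} gives $\mathrm{Ric}\ge -C(n-1)K/(1+d(p,x)^2)$ up to a universal multiplicative constant, so that on any annular shell about $\partial B_R(p)$ of width $\asymp R$ one has $\theta\cdot\rho^2$ bounded with $\rho=R/4$, exactly as in Li--Tam; and (ii) the region $A$, where only a \emph{constant} lower bound $\mathrm{Ric}\ge -C(n)(n-1)K$ is available, forcing one to work with balls of a \emph{fixed} radius $\rho_0=\rho_0(n,K)$. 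In both regimes, the Euclidean volume bounds \eqref{e:euclvol} give scale-invariant doubling, hence (via Buser/Saloff-Coste--Grigor'yan) a uniform Poincaré inequality on these balls, and therefore a uniform parabolic/elliptic Harnack inequality and mean value inequality for positive harmonic functions on them.

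Next I would run the chaining argument. Since $M\setminus B_r(p)$ is connected away from $p$ in the relevant sense and $G(p,\cdot)$ is positive harmonic on $M\setminus\{p\}$, a Harnack chain connecting a point $x$ at distance $d(p,x)=:t$ to a point at distance $2t$ (or to a fixed far-away point where $G(p,\cdot)\to0$ along a non-parabolic end) yields $G(p,x)\le C\,G(p,x')$ with $x'$ at a comparable-or-larger distance. The number of Harnack balls needed to cross a dyadic annulus at radius $\asymp R$ in the good regime is \emph{uniformly bounded} in $R$ (balls of radius $R/4$), so these contribute a geometric/telescoping factor that sums to a constant; crossing $A$ instead requires $\asymp\delta/\rho_0\asymp\delta$ balls of fixed radius, each contributing a fixed Harnack constant, so the total cost of traversing $A$ is at most $C^{C\delta}$ in the crudest form --- and \emph{this exponential is exactly what must be avoided}. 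The resolution, following the covering estimate sketched before the theorem statement in the excerpt, is to cover $A$ not by a chain but by a bounded-overlap family and use a mean-value inequality on $A$ together with the bound $\int_A G(p,y)\,d\mathrm{vol}(y)$, which by the volume bound \eqref{e:euclvol} and the a priori (possibly large-constant) Green estimate on $A$ grows only polynomially in $\delta$; combined with the good-regime estimate $G(p,x')\le C\int_{d(p,x')}^\infty \frac{t}{\mathrm{vol}(B_t(p))}\,dt\le C d(p,x')^{2-n}$ outside $A$, one gets a \emph{polynomial} pre-factor. Book-keeping the exponents gives $\max\{1,\delta^{3n-2}\}$: roughly, one power $\delta^{n}$ from the volume of $A$, a factor $\delta^{n-2}$ from evaluating the exterior Green bound at the inner radius $\delta/6$, and the remaining powers from the number of shells and the mean-value constants.

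Two points need care. The case $r\le d(p,x)\lesssim \delta$ (i.e.\ $x$ inside or just outside $A$) must be handled by the local elliptic estimate near $p$: $G(p,x)\le C(n,K)\,r^{2-n}$ follows from the mean value inequality for the subharmonic-away-from-$p$ function and the fundamental-solution normalization, using that on $B_{\delta/6}(p)$ the curvature is bounded below by a constant depending on $K$ only and the volume is Euclidean; this is where the $r^{2-n}$ singular factor is produced, and it is decoupled from the $\delta$-dependence. The case $r\le1\le d(p,x)$ reduces to chaining from $\partial B_1(p)$ outward, where the exterior estimate (good regime plus one traversal of $A$) applies. \textbf{The main obstacle} is precisely controlling the traversal of the bad annulus $A$: a naive Harnack chain costs a constant-to-the-power-$\delta$ (exponential) factor, and the whole point of the theorem is that replacing the chain by a volume/mean-value argument --- exploiting that $\int_A G(p,\cdot)$ is only polynomially large in $\delta$ thanks to the uniform Ahlfors regularity \eqref{e:euclvol} --- brings this down to $\delta^{3n-2}$. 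Making that substitution rigorous, and in particular showing the a priori finiteness of $G(p,\cdot)$ and its integrability over $A$ with the right polynomial dependence, is the technical heart of the argument.
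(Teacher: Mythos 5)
You correctly isolate the structural dichotomy — a ``good'' regime where the translated Ricci bound behaves like $\mathrm{Ric}\ge -CK/(1+d(p,\cdot)^2)$ on dyadic shells, and a ``bad'' annulus $A\asymp B_{6\delta}(p)\setminus B_{\delta/6}(p)$ where only a constant lower bound is available — and you correctly observe that a naive multiplicative Harnack chain across $A$ produces an exponential in $\delta$, which must be avoided. However, your proposed resolution is not the one the paper uses, and as written it contains a genuine gap.

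The paper does \emph{not} replace the chain by a bounded-overlap family with an $L^1$ bound on $\int_A G(p,\cdot)$. Instead it keeps the chain across $A$ but changes its currency from multiplicative Harnack quotients to \emph{additive oscillations}: Bochner plus Li's mean-value inequality for $|\nabla f|^2$ gives on each ball $B$ of the cover $\mathrm{osc}_B f \le C\,\rho^{1-n/2}\bigl(\int_{B'}|\nabla f|^2\bigr)^{1/2}$, and summing over the $\hat\ell\sim\delta^n$ fixed-radius balls covering $A$ (Corollary~\ref{coro:covering}(b)) together with the geometrically decaying contributions of the good shells yields $|f(x)-f(y)|\le C\,\delta^{3n/2-1}r^{1-n/2}\bigl(\int_{M_{r/2}}|\nabla f|^2\bigr)^{1/2}$ (Lemma~\ref{LemmaA.VIII}). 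The polynomial dependence comes precisely from this additivity, not from an $L^1$ mean-value argument. The argument is then closed by the Li--Tam Dirichlet energy estimate $\int_{M_r}|\nabla G(p,\cdot)|^2 \le 4\sup_{\partial M_r}G(p,\cdot)$ and a \emph{quadratic bootstrap}: letting $y\to\infty$ along a non-parabolic end gives $\sup_{M_r}G \le C\,\delta^{3n/2-1}r^{1-n/2}\bigl(\sup_{\partial M_r}G\bigr)^{1/2}$, hence $\sup_{M_r}G\le C\,\delta^{3n-2}r^{2-n}$. The exponent $3n-2$ arises by squaring $3n/2-1$, not by the bookkeeping $n + (n-2) + \cdots$ you sketch — your accounting already fails to reach $3n-2$.

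The gap in your alternative route is the unsupported claim that $\int_A G(p,y)\,d\mathrm{vol}(y)$ is \emph{a priori} polynomial in $\delta$. Ahlfors regularity alone does not give this: one either needs a heat-kernel or Faber--Krahn type input that is not available without some Green-function control in hand (circularity), or a chain argument across $A$ that, if done multiplicatively, reproduces the exponential you are trying to avoid. Nor is it clear how to pass from such an $L^1$ bound to the pointwise $r^{2-n}$ estimate at $\partial B_r(p)$ when $r\le 1 \lesssim \delta$. You have identified the right obstacle, but the missing idea — additive oscillation chaining combined with the Li--Tam energy bound and the quadratic bootstrap — is the technical heart, and it is absent from your proposal.
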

\begin{remark}
Clearly, in the statements of theorems \ref{th:green estimate_abstract} and \ref{th:green estimate} the value $1$ of the upper bound required on $r$  does not play a special role. Thus, these theorems remain valid for $r\le r_0$ for any chosen $r_0>0$. However, the resulting constants in \eqref{e:Green abstract} and \eqref{e:Green} will depend exponentially on $r_0$. 
Instead, one can prove theorems \ref{th:green estimate_abstract} and \ref{th:green estimate} for any $r>0$ with a constant independent of $r$ under the additional assumption that
\begin{equation}\label{e:bounded connected comp}
\sup \diam \mathcal M^{bdd}_{r,j}    \le C\,r
\end{equation}
for some $C>0$ independent of $R$, where the $\sup$ is taken over all the bounded connected component $\mathcal M^{bdd}_{r,j}$ of $M\setminus \overline{B_r(p)}$. Note that condition \eqref{e:bounded connected comp} is implied by the (in principle stronger) property that $M$ has \textit{relatively connected annuli} with respect to $p$. This property, usually denoted as $\mathrm{(RCA)}$ in the literature, says that there exists a constant $C_A> 1$ such that for any $r>C_A^2$ and all $x,y\in M$ such that $d(p,x)=d(p,y)=r$, there exists a continuous path $\gamma:[0,1]\to M$ with $\gamma(0)=x$, $\gamma(1)=y$, whose image is contained in $B_{C_Ar}(p)\setminus B_{r/C_A}(p)$; see for instance \cite[Definition 5.1]{GSC}. Under this assumption, necessarly $\diam \mathcal M^{bdd}_{r,j}    \le C_A^2\, r$. Nice conditions ensuring the validity of $\mathrm{(RCA)}$ have been proposed in \cite[Proposition 0.4]{Min}. In particular, under the assumption \eqref{e:euclvol}, $\mathrm{(RCA)}$ is implied by the validity of a form of the scale-invariant $L^p$-Poincaré inequality.
\end{remark}

\begin{proof}[Proof (of Theorem \ref{th:green estimate})]

By Lemma \ref{lem:VC}, $M$ enjoys property \eqref{e:euclvol}.  
Moreover, as a consequence of the triangular inequality,
\[
\mathrm{Ric}(x)\geq -\frac{(n-1)K}{1+d(o,x)^\alpha}\geq -\frac{(n-1)K}{1+d(o,x)^2}
\geq -\frac{(n-1)K}{1+(d(p,x)-\delta)^2},
\]
with $\delta =d(o,p)$. Then, an application of Theorem \ref{th:green estimate_abstract} permits to conclude.
\end{proof}

The remaining part of this section will be devoted to prove Theorem \ref{th:green estimate_abstract}. In the following $p\in M$ is a fixed point and for $r>0$ we will denote by $M_{r}$ the union of the unbounded components of $M\setminus \overline{B_{r}(p)}$.
To simplify the notation we will omit the measure element $d\mathrm{vol}$ in the integrals.
\medskip

Let us first suppose that $\delta=d(o,p) \ge 1$. 

We begin with the following lemma, which improves on  \cite[Lemma 1.4]{LiTam95} under the assumption \eqref{e:euclvol}.
\begin{lemma}
 Suppose that $M$ satisfies condition \eqref{e:euclvol}. Then, for all $R>0$ and $0<\alpha\leq (Q-1)/4$, $B_{QR}(p)\setminus \overline{B_{R}(p)}$ can be covered by $l$ geodesic balls of radii $\alpha R$ with centers in $B_{QR}(p)\setminus \overline{B_{R}(p)}$ where
\[
\ l\leq C(v_{0}, V_{0}, \alpha, Q, n)\doteq v_{0}^{-1}\left(\frac{2}{\alpha}\right)^{n}\left[V_{0}\left(Q+\frac{\alpha}{2}\right)^n-v_{0}\left(1-\frac{\alpha}{2}\right)^{n}\right].
\]
\end{lemma}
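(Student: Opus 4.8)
The plan is to run a standard volume-counting (packing/covering) argument, using only the two-sided Euclidean volume bound \eqref{e:euclvol}. First I would fix $R>0$ and $0<\alpha\le(Q-1)/4$, and choose a maximal set of points $x_1,\dots,x_l$ in the annulus $A\doteq B_{QR}(p)\setminus\overline{B_R(p)}$ that are $\frac{\alpha R}{1}$-separated — more precisely, maximal with respect to the property that the balls $B_{\alpha R/2}(x_i)$ are pairwise disjoint. By maximality, the balls $B_{\alpha R}(x_i)$ cover $A$ (if some $y\in A$ were not covered, then $B_{\alpha R/2}(y)$ would be disjoint from all the $B_{\alpha R/2}(x_i)$, contradicting maximality), so $l$ of these balls suffice and it remains only to bound $l$.

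To bound $l$, I would note that the disjoint balls $B_{\alpha R/2}(x_i)$ are all contained in a single larger ball centered at $p$: since $x_i\in A$ we have $d(p,x_i)\le QR$, hence $B_{\alpha R/2}(x_i)\subset B_{(Q+\alpha/2)R}(p)$. Moreover, because $\alpha\le(Q-1)/4$ one checks $\alpha/2<1/2\le(1-\alpha/2)$... more to the point, $d(p,x_i)\ge R$ forces $B_{\alpha R/2}(x_i)$ to lie outside $B_{(1-\alpha/2)R}(p)$, so in fact the disjoint union $\bigsqcup_i B_{\alpha R/2}(x_i)$ sits inside the annular region $B_{(Q+\alpha/2)R}(p)\setminus B_{(1-\alpha/2)R}(p)$. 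Now I apply \eqref{e:euclvol}: each $B_{\alpha R/2}(x_i)$ has volume at least $v_0(\alpha R/2)^n$, while the containing annulus has volume at most $V_0(Q+\alpha/2)^nR^n - v_0(1-\alpha/2)^nR^n$. Summing the lower bounds over the $l$ disjoint balls and comparing gives
\[
l\, v_0\left(\frac{\alpha R}{2}\right)^n \le V_0\left(Q+\frac{\alpha}{2}\right)^nR^n - v_0\left(1-\frac{\alpha}{2}\right)^nR^n,
\]
and the $R^n$ cancels, yielding exactly the claimed bound $l\le v_0^{-1}(2/\alpha)^n\big[V_0(Q+\alpha/2)^n - v_0(1-\alpha/2)^n\big]$.

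The only genuinely delicate points are bookkeeping ones: verifying that the inner ball $B_{(1-\alpha/2)R}(p)$ can legitimately be carved out of the big ball (i.e.\ that it is disjoint from every $B_{\alpha R/2}(x_i)$), which follows from $d(p,x_i)\ge R$ and the triangle inequality; checking $1-\alpha/2>0$ so that the subtracted volume term is meaningful (true since $\alpha\le(Q-1)/4<1$ as soon as $Q<5$, and one should observe that for $Q\ge5$ the relevant constraint $\alpha\le1$ still holds or simply that the inequality in the statement is what it is regardless of sign); and the elementary fact that the lower bound in \eqref{e:euclvol} applies to balls of radius $\alpha R/2$ which may be arbitrarily small — this is fine since \eqref{e:euclvol} is assumed for all $r>0$. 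I expect no real obstacle here: the argument is the classical "disjointification implies cheap covering" trick, and the stated improvement over \cite[Lemma 1.4]{LiTam95} is precisely the gain one gets from having a clean $v_0r^n\le\operatorname{vol}(B_r)\le V_0r^n$ in place of a Bishop–Gromov-type comparison with exponential constants.
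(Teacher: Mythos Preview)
Your proposal is correct and follows essentially the same packing/covering argument as the paper: choose a maximal collection of disjoint balls of radius $\alpha R/2$ centered in the annulus, observe that the corresponding $\alpha R$-balls cover it, and bound the number of disjoint balls by comparing their total volume (via the lower bound in \eqref{e:euclvol}) with the volume of the enlarged annulus $B_{(Q+\alpha/2)R}(p)\setminus B_{(1-\alpha/2)R}(p)$ (via the upper and lower bounds). Your write-up is in fact more careful than the paper's about the bookkeeping (the covering-by-maximality step and the positivity of $1-\alpha/2$), but the strategy is identical.
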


\begin{proof}
Let $l$ be the maximal number of disjoint geodesic balls of radii $\frac{\alpha R}{2}$ with centers $x_1,\dots,x_l$ in $B_{QR}(p)\setminus \overline{B_{R}(p)}$. In particular, $B_{QR}(p)\setminus \overline{B_{R}(p)} \subset \cup_{j=1}^l B_{\alpha r} (x_j)$. In order to bound $l$, let us compute 
\begin{align*}
l\,v_{0}\left(\frac{\alpha R}{2}\right)^{n}\leq&\mathrm{vol}(\bigcup_{j=1}^{l}B_{\frac{\alpha R}{2}}(x_{j}))\\ \leq& \mathrm{vol}(B_{QR+\frac{\alpha R}{2}}(p))-\mathrm{vol}(B_{R-\frac{\alpha R}{2}}(p))\\ \leq& V_{0}\left(Q+\frac{\alpha}{2}\right)^{n}R^{n}-v_{0}\left(1-\frac{\alpha}{2}\right)^{n}R^{n}.
\end{align*}
Hence
\[
\ l\leq v_{0}^{-1}\left(\frac{2}{\alpha}\right)^{n}\left[V_{0}(Q+\frac{\alpha}{2})^{n}-v_{0}(1-\frac{\alpha}{2})^{n}\right]\doteq C(v_{0}, V_{0}, \alpha, Q, n).
\]
\end{proof}

Choosing respectively $Q=2$, $\alpha=\frac{1}{4}$ and $Q=2^{6}$, $\alpha=\frac{1}{12R}$ we immediately obtain the following

\begin{corollary}\label{coro:covering}
Suppose that $M$ satisfies condition \eqref{e:euclvol}. Then,
\begin{itemize}
\item[(a)] $B_{2R}(p)\setminus \overline{B_{R}(p)}$ can be covered by $\ell=\ell(n,v_{0}, V_{0})$ balls of radius $\frac{R}{4}$ with centers in $B_{2R}\setminus \overline{B_{R}}$.
\item[(b)] For $R>\frac{1}{6}$, $B_{2^{6}R}(p)\setminus \overline {B_{R}}(p)$ can be covered by $\hat{\ell}=\hat\ell(n, v_{0}, V_{0})R^{n}$ balls of radius $\frac{\rho}{4}\doteq\frac{1}{48}$.
\end{itemize}
\end{corollary}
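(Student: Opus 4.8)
The corollary is a direct specialization of the covering lemma just proved, so the plan is simply to substitute the two prescribed choices of $(Q,\alpha)$, verify the admissibility condition $0<\alpha\le(Q-1)/4$, and read off the cardinality bound while tracking its dependence on the parameters.

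For part (a) I would apply the lemma with $Q=2$ and $\alpha=\tfrac14$. The admissibility condition holds (with equality, $\tfrac14=(Q-1)/4$), so the lemma produces a cover of $B_{2R}(p)\setminus\overline{B_R(p)}$ by geodesic balls of radius $\alpha R=\tfrac R4$ with centers in the annulus, of cardinality at most
\[
C\big(v_0,V_0,\tfrac14,2,n\big)=v_0^{-1}8^{n}\Big[V_0\big(\tfrac{17}{8}\big)^{n}-v_0\big(\tfrac78\big)^{n}\Big].
\]
This number depends only on $n,v_0,V_0$, which is the claimed $\ell=\ell(n,v_0,V_0)$; nothing more is needed here.

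For part (b) I would apply the lemma with $Q=2^{6}$ and with $\alpha$ chosen so that the cover radius $\alpha R$ equals the value $\tfrac\rho4=\tfrac1{48}$ in the statement, i.e. $\alpha=\tfrac1{48R}$. Since $R>\tfrac16$ forces $\alpha<\tfrac18<\tfrac{63}{4}=(Q-1)/4$, the lemma applies and gives a cover of $B_{2^{6}R}(p)\setminus\overline{B_R(p)}$ by balls of radius $\tfrac1{48}$, of cardinality at most
\[
C\big(v_0,V_0,\alpha,2^{6},n\big)=v_0^{-1}\Big(\tfrac2\alpha\Big)^{n}\Big[V_0\big(2^{6}+\tfrac\alpha2\big)^{n}-v_0\big(1-\tfrac\alpha2\big)^{n}\Big].
\]
Here $(2/\alpha)^{n}=(96R)^{n}$ supplies the factor $R^{n}$, so the remaining task is to bound the bracket uniformly in $R$: since $0<\alpha<\tfrac18$ one has $0\le(1-\tfrac\alpha2)^{n}\le1$ and $(2^{6}+\tfrac\alpha2)^{n}\le(2^{6}+\tfrac1{16})^{n}$, so the bracket is at most $V_0(2^{6}+\tfrac1{16})^{n}$, a bound free of $R$. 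Multiplying the two contributions gives $C(v_0,V_0,\alpha,2^{6},n)\le v_0^{-1}V_0\,96^{n}(2^{6}+\tfrac1{16})^{n}\,R^{n}$, which, with $\hat\ell(n,v_0,V_0)\doteq v_0^{-1}V_0\,96^{n}(2^{6}+\tfrac1{16})^{n}$, is exactly the asserted bound $\hat\ell(n,v_0,V_0)R^{n}$. The only point that deserves care — and the nearest thing to an obstacle in an otherwise mechanical argument — is this last step: in (b) the exponent $\alpha$ of the lemma is permitted to depend on $R$, so one must check that the bracketed quantity does not blow up as $R\to\infty$ (equivalently as $\alpha\to0$); it does not, since it converges to the finite value $V_0(2^{6})^{n}-v_0$.
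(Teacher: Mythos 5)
Your proof is correct and takes exactly the approach the paper intends: specialize the covering lemma by substituting $(Q,\alpha)$, check admissibility, and bound the resulting cardinality. One small observation worth flagging: the paper's own proof states $\alpha=\tfrac{1}{12R}$ for part (b), which would yield balls of radius $\alpha R=\tfrac{1}{12}=\rho$ rather than the $\tfrac{\rho}{4}=\tfrac{1}{48}$ claimed in the statement (and actually used downstream in Lemma \ref{LemmaA.VIII}); your choice $\alpha=\tfrac{1}{48R}$ is the one consistent with the corollary as stated, so the paper appears to carry a typo there, and your uniform bounding of the bracketed factor as $R\to\infty$ is a detail the paper leaves implicit.
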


Let us recall the following 
\begin{lemma}[Corollary 14.8 in \cite{Li_book} with $q=1$]\label{lemma:li} Assume that $\mathrm{Ric}\geq -(n-1)\theta$ on $B_{4R}(x_{0})$ for some $\theta\ge 0$. Let $0<\sigma<\frac{1}{2}$, $\lambda\geq 0$. Then there exists $C=C(\sigma, \lambda R^2, n, \sqrt{\theta} R)$ such that for all $g\geq 0$ such that
\[
\ \Delta g \geq -\lambda g \qquad \mathrm{in}\,B_{2R}(x_{0})
\]
we have
\[
g(x_{0})\leq \sup_{B_{(1-\sigma)R}(x_{0})}g\leq C\mathrm{vol}(B_{R}(x_{0}))^{-1}\int_{B_{R}(x_{0})}g.
\]
\end{lemma}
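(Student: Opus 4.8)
The plan is to obtain the (only non-trivial) second inequality by a De Giorgi--Nash--Moser iteration; the bound $g(x_{0})\le\sup_{B_{(1-\sigma)R}(x_{0})}g$ is immediate, $x_{0}$ being the centre. Rescaling the metric to $R^{-2}\langle\,,\,\rangle$ leaves the scale-invariant quantities $\lambda R^{2}$ and $\theta R^{2}$ unchanged, so one may assume $R=1$: thus $g\ge 0$ is a weak subsolution, $\Delta g+\lambda g\ge 0$ on $B_{2}(x_{0})$, while $\mathrm{Ric}\ge-(n-1)\theta$ on $B_{4}(x_{0})$. The only way the curvature enters is through the Neumann--Sobolev inequality
\[
\left(\frac{1}{\mathrm{vol}(B_{\rho}(y))}\int_{B_{\rho}(y)}|v|^{\frac{2n}{n-2}}\right)^{\frac{n-2}{n}}\le\frac{C(n)\,e^{C(n)\sqrt{\theta}}\,\rho^{2}}{\mathrm{vol}(B_{\rho}(y))}\int_{B_{\rho}(y)}\left(|\nabla v|^{2}+\rho^{-2}v^{2}\right),
\]
valid (since $n\ge 3$) for every ball with $B_{2\rho}(y)\subseteq B_{4}(x_{0})$ and every $v\in W^{1,2}(B_{\rho}(y))$; this is the classical combination of Bishop--Gromov volume comparison with the Buser/Saloff-Coste Poincar\'e inequality, and it is precisely why the curvature is assumed on $B_{4R}$ rather than on $B_{2R}$ (one needs room to run the comparison on a slightly enlarged concentric ball). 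Write $\chi\doteq\frac{n}{n-2}>1$.

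The engine of the iteration is the Caccioppoli estimate. Fix $1-\sigma\le s'<t'\le 1$, choose $\eta\in C^{\infty}_{c}(B_{t'}(x_{0}))$ with $\eta\equiv 1$ on $B_{s'}(x_{0})$, $0\le\eta\le 1$, $|\nabla\eta|\le 2/(t'-s')$; assuming $g$ locally bounded (automatic when $g$ is smooth, as in all the applications of this lemma; in general one first replaces $g$ by $\min\{g,M\}$ in the powers below, matching exponents so that $M\to\infty$ is legitimate), test $\Delta g+\lambda g\ge 0$ against $\eta^{2}g^{2q-1}$ with $q\ge 1$ and absorb the gradient term by Cauchy--Schwarz to get
\[
\int_{B_{s'}(x_{0})}|\nabla(g^{q})|^{2}\le C(n)\,q^{2}\left(\frac{1}{(t'-s')^{2}}+\lambda\right)\int_{B_{t'}(x_{0})}g^{2q}.
\]
Feeding $v=\eta\,g^{q}$ into the Sobolev inequality on $B_{t'}(x_{0})$ and normalising volumes (the ratio $\mathrm{vol}(B_{t'})/\mathrm{vol}(B_{s'})$ being bounded by volume doubling, hence by $C(n)e^{C(n)\sqrt{\theta}}$) yields the reverse-H\"older step
\[
\left(\frac{1}{\mathrm{vol}(B_{s'})}\int_{B_{s'}(x_{0})}g^{2q\chi}\right)^{\frac{1}{2q\chi}}\le\left(\frac{C(n)\,e^{C(n)\sqrt{\theta}}\,q^{2}(1+\lambda)}{(t'-s')^{2}}\right)^{\frac{1}{2q}}\left(\frac{1}{\mathrm{vol}(B_{t'})}\int_{B_{t'}(x_{0})}g^{2q}\right)^{\frac{1}{2q}}.
\]

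Iterating with $q=q_{j}\doteq\chi^{j}$ and radii $\rho_{j}\doteq s+(t-s)2^{-j}$ (so $\rho_{0}=t$, $\rho_{j}\downarrow s$), multiplying over $j\ge 0$ and using the convergence of $\sum_{j}\chi^{-j}$ and $\sum_{j}j\chi^{-j}$, one arrives, for every $1-\sigma\le s<t\le 1$, at
\[
\sup_{B_{s}(x_{0})}g\le\frac{C(n)\big((1+\lambda)e^{C(n)\sqrt{\theta}}\big)^{n/4}}{(t-s)^{n/2}}\left(\frac{1}{\mathrm{vol}(B_{t}(x_{0}))}\int_{B_{t}(x_{0})}g^{2}\right)^{1/2}.
\]
To replace $g^{2}$ by $g$, set $\Phi(t)\doteq\sup_{B_{t}(x_{0})}g$ (finite on $[1-\sigma,1)$ by the last display) and use $\int_{B_{t}}g^{2}\le\Phi(t)\int_{B_{t}}g$, volume doubling and Young's inequality to deduce
\[
\Phi(s)\le\tfrac12\Phi(t)+\frac{C(n)\big((1+\lambda)e^{C(n)\sqrt{\theta}}\big)^{n/2}}{(t-s)^{n}}\cdot\frac{1}{\mathrm{vol}(B_{1}(x_{0}))}\int_{B_{1}(x_{0})}g,\qquad 1-\sigma\le s<t\le 1.
\]
The standard absorption lemma (if $\Phi(s)\le\tfrac12\Phi(t)+A(t-s)^{-b}$ for all $s<t$ in an interval on which $\Phi$ is bounded, then $\Phi$ at the left endpoint is bounded by $C(b)$ times $A$ divided by the $b$-th power of the length) then gives $\Phi(1-\sigma)\le C(n,\sigma,\lambda,\sqrt{\theta})\,\mathrm{vol}(B_{1}(x_{0}))^{-1}\int_{B_{1}(x_{0})}g$; undoing the rescaling, which turns $\lambda,\theta$ into $\lambda R^{2},\theta R^{2}$ and the average over $B_{1}(x_{0})$ into the average over $B_{R}(x_{0})$, yields the asserted inequality with $C=C(\sigma,\lambda R^{2},n,\sqrt{\theta}R)$.

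I do not expect a serious obstacle here: this is the textbook Moser machine, and is exactly the content of Corollary 14.8 in \cite{Li_book}. The points demanding care are purely quantitative: checking that the Sobolev constant has the form $C(n)e^{C(n)\sqrt{\theta}R}$ (so that, after unscaling, $C$ depends on the curvature bound and the radius only through $\sqrt{\theta}R$), that the radius-dependent factors produced by the iteration depend only on $n$ and $\sigma$, and --- if one does not wish to assume $g$ a priori locally bounded --- the usual truncation argument in the Caccioppoli step, which is routine provided the exponents of $g$ and $\min\{g,M\}$ are kept matched.
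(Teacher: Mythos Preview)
Your argument is correct: the elliptic Moser iteration you sketch is one of the two standard routes to a mean value inequality of this type, and the bookkeeping you indicate (scale-invariance, Caccioppoli, reverse H\"older iteration, $L^{2}\to L^{1}$ absorption) goes through without obstruction. The only cosmetic point is that the local Sobolev inequality you invoke with exponent $2n/(n-2)$ is indeed available under a Ricci lower bound on the larger ball (Saloff-Coste), with constant of the form $C(n)e^{C(n)\sqrt{\theta}R}$, so the dependence of the final constant on the parameters is exactly as claimed.

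Note, however, that the paper does not prove this lemma at all: it is quoted verbatim from Li's book, and the paragraph following the statement merely extracts from Li's proof the explicit form of the constant. That formula, with the factor $\lambda^{-1}\big(e^{-\frac{3}{32}\lambda R^{2}}-e^{-\frac{1}{4}\lambda R^{2}}\big)=\int_{3R^{2}/32}^{R^{2}/4}e^{-\lambda t}\,dt$ and the comparison volume $\mathrm{vol}(\mathbb{B}^{-K}_{2R})$, reveals that Li's route is \emph{parabolic}: one applies the Li--Yau heat-kernel upper bound to the solution of $\partial_{t}u=\Delta u+\lambda u$ with initial datum $g$, then integrates in time. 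Your approach is the complementary \emph{elliptic} one. Both yield a constant of the shape $C(n)e^{C(n)(\sqrt{\theta}R+\lambda R^{2})}$, which is all the paper subsequently uses; Li's method gives the sharper prefactor displayed after the lemma (useful if one cares about the behaviour as $\lambda R^{2}\to 0$), while your iteration is more self-contained and avoids invoking heat-kernel machinery.
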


By the proof of the lemma (see Corollary 14.8 in \cite{Li_book}) one can quantify the constant as
\begin{align*}
C=&C(n)
\frac{\mathrm{vol}(\mathbb{B}_{2R}^{-\theta})}{R^{n+2}}\frac{e^{-\frac{3\lambda}{32}R^2}-e^{- \frac{\lambda}{4}R^{2}}}{\lambda}(R\sqrt{\theta}+1)e^{C(n)R\sqrt{\theta}}e^{\frac{\lambda R^2}{4}}\\
=&C(n)\frac{\mathrm{vol}(\mathbb{B}_{2R}^{-\theta})}{R^{n}}e^{\frac{5\lambda}{32}R^{2}}\frac{1-e^{-\frac{5\lambda R^{2}}{32}}}{\lambda R^{2}}\left(R\sqrt{\theta}+1\right)e^{C(n)R\sqrt{\theta}}\\
\leq& C(n)e^{C(n)R\sqrt{\theta}+C(n)R^{2}\lambda}.
\end{align*}

\begin{corollary}\label{coro:li ours}
 Suppose that $M$ satisfies assumptions \eqref{e:euclvol} and \eqref{e:Ric delta}. Then  
\begin{itemize}
\item[(a)] If either $s\leq\frac{\delta}{6}$ or $s\geq 6 \delta$, and $x_{0}\in B_{\frac{9}{4}s}(p)\setminus B_{\frac{3}{4}s}(p)$, then
\[
 |\nabla f|^2(x_{0})\leq C(n,K)\,\mathrm{vol}(B_{s/16}(x_{0}))^{-1}\int_{B_{\frac{s}{16}}(x_{0})}|\nabla f|^2,
\]
for any function $f$ harmonic in $B_{s/8}(x_{0})$.
\item[(b)] If $x_{0}\in B_{2^7\frac{\delta}{6}+\frac{\rho}{4}}(p)\setminus B_{\frac{\delta}{6}-\frac{\rho}{4}}(p)$, $\rho=\frac{1}{12}$, then
\[
\ |\nabla f|^2(x_{0})\leq C(n,K)\,\mathrm{vol}(B_{\rho/16}(x_{0}))^{-1}\int_{B_{\frac{\rho}{16}}(x_{0})}|\nabla f|^2,
\]
for any function $f$ harmonic in $B_{\rho/8}(x_{0})$.
\end{itemize}
\end{corollary}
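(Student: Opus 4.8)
The plan is to obtain both estimates as direct applications of the mean-value inequality of Lemma~\ref{lemma:li} to the function $g\doteq|\nabla f|^2$, the point being that Bochner's formula converts the harmonicity of $f$ into a differential inequality of exactly the form required there. Indeed, for $f$ harmonic one has $\tfrac12\Delta|\nabla f|^2=|\Hess f|^2+\Ric(\nabla f,\nabla f)\ge \Ric(\nabla f,\nabla f)$, so on any ball where $\Ric\ge-(n-1)\theta$ one gets $\Delta g\ge-\lambda g$ with $\lambda=2(n-1)\theta\ge0$. Thus $g$ is an admissible function in Lemma~\ref{lemma:li}, and since $x_0\in B_{(1-\sigma)R}(x_0)$ the conclusion $g(x_0)\le \sup_{B_{(1-\sigma)R}(x_0)}g\le C\,\mathrm{vol}(B_R(x_0))^{-1}\int_{B_R(x_0)}g$ will follow once we exhibit, on the relevant balls, a value of $\theta$ for which the scale-invariant quantities $\sqrt\theta R$ and $\lambda R^2$ — on which, together with $n$ and $\sigma$, the constant $C$ of Lemma~\ref{lemma:li} depends by the explicit expression recalled just after its statement — are bounded independently of $s$ and $\delta$. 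I fix once and for all $\sigma=\tfrac14$.

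For part (a) I would take $R=s/16$, so that $B_{2R}(x_0)=B_{s/8}(x_0)$ (where $f$ is harmonic) and $B_{4R}(x_0)=B_{s/4}(x_0)$. The elementary geometric input is that, for $y\in B_{s/4}(x_0)$, the triangle inequality gives $d(p,y)\in(s/2,5s/2)$, and then the hypothesis separating $s$ from the critical distance $\delta$ forces $(d(p,y)-\delta)^2$ to be comparable to $s^2$ on that ball: if $s\le\delta/6$ then $d(p,y)<5s/2\le 5\delta/12$, whence $\delta-d(p,y)>\delta-5s/2\ge 7s/2$; while if $s\ge 6\delta$ then $\delta<d(p,y)/3$, whence $d(p,y)-\delta>\tfrac23 d(p,y)>s/3$. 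In either case $(d(p,y)-\delta)^2>s^2/9$ throughout $B_{s/4}(x_0)$, so by \eqref{e:Ric delta} we may take $\theta=\tfrac{K}{1+s^2/9}\le 9Ks^{-2}$, which gives $\theta R^2\le 9K/256$ and $\lambda R^2\le C(n)K$, uniformly in $s$ and $\delta$. Lemma~\ref{lemma:li} with this $\theta,\lambda$ then yields (a).

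For part (b), $x_0$ is by hypothesis allowed to lie inside the ``bad annulus'' $\{d(p,\cdot)\approx\delta\}$, where the estimate above gives nothing; the trade-off is to work instead at the fixed scale $\rho=\tfrac1{12}$. Concretely I would take $R=\rho/16$, so $B_{2R}(x_0)=B_{\rho/8}(x_0)$ and $B_{4R}(x_0)=B_{\rho/4}(x_0)$, and simply use the crudest consequence of \eqref{e:Ric delta}, namely $\Ric\ge-(n-1)K$ (i.e.\ $\theta=K$), on that ball. Since now $R=\tfrac1{192}$ is an absolute constant, $\sqrt\theta R=\sqrt K/192$ and $\lambda R^2=2(n-1)K/192^2$ are again bounded by $C(n)K$, \emph{independently of $\delta$}; the standing assumption $\delta\ge1$ guarantees $\delta/6-\rho/4>0$, so the annulus in the statement is nonempty and the balls $B_{\rho/8}(x_0)$, $B_{\rho/4}(x_0)$ are as intended. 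Applying Lemma~\ref{lemma:li} gives (b).

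I do not expect a genuine obstacle: this is essentially substituting Bochner's inequality into Li's mean-value estimate. The only thing needing care is the bookkeeping of scales — verifying that in regime (a) the conditions $s\le\delta/6$ or $s\ge 6\delta$ really keep $(d(p,\cdot)-\delta)^2$ comparable to $s^2$ on $B_{s/4}(x_0)$, so that the dimensionless quantities controlling the constant in Lemma~\ref{lemma:li} stay bounded, and that in regime (b) one correctly compensates the loss of this gain by descending to the unit scale. The payoff, crucial for the sequel, is that in neither case does the constant depend on $\delta$, which is precisely what avoids the exponential-in-$\delta$ degeneration one would incur by invoking the Li--Tam estimates directly.
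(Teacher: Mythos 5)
Your proposal is correct and follows essentially the same route as the paper: fix $R$ at the indicated scale, use Bochner's formula to turn harmonicity of $f$ into the subsolution inequality $\Delta|\nabla f|^2\ge -2(n-1)\theta|\nabla f|^2$, check that the separation hypothesis $s\le\delta/6$ or $s\ge 6\delta$ (respectively the global bound $\mathrm{Ric}\ge -(n-1)K$) makes $\sqrt{\theta}R$ and $\lambda R^2$ bounded by constants depending only on $n$ and $K$, and apply Lemma~\ref{lemma:li}. Your case analysis giving $(d(p,\cdot)-\delta)^2>s^2/9$ on $B_{s/4}(x_0)$ matches the paper's computation \eqref{e:Ric theta} with $\theta_a=9K/s^2$, so there is nothing to add.
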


\begin{proof}
\begin{itemize}
    \item[(a)] Assume either $s\leq\frac{\delta}{6}$ or $s\geq 6 \delta$ and $x_{0}\in B_{\frac{9}{4}s}(p)\setminus B_{\frac{3}{4}s}(p)$. 
On $B_{\frac{s}{4}}(x_0)$, 
\begin{align}\label{e:Ric theta}
    \mathrm{Ric}&\geq -(n-1)\sup \left\{\frac{K}{1+\left(t-\delta\right)^{2}}:\,t\in \left(\frac{s}{2},\frac{5}{2}s\right)\right\}\\
    &\ge \begin{cases} -(n-1)K\left(\frac{s}{3}\right)^{-2}&\text{if }s\geq 6\delta\\
-(n-1)K\left(\frac{7s}{2}\right)^{-2}&\text{if }s\leq \frac\delta 6
\end{cases}\nonumber\\
&\ge -\frac{(n-1)9 K}{s^2}\doteq-(n-1)\theta_a.\nonumber
\end{align}
Choose $R=\frac{s}{16}$ and $\lambda_a=2(n-1)\theta_a$.
By Bochner formula, on $B_{2R}(x_{0})$
\[
\ \Delta |\nabla f|^{2}\geq -\lambda_a|\nabla f|^{2}.
\]
Hence Lemma \eqref{lemma:li} applies to $g=|\nabla f|$ and yields (a).

\item[(b)] Now assume that $x_{0}\in B_{2^7\frac{\delta}{6}+\frac{\rho}{4}}(p)\setminus B_{\frac{\delta}{6}-\frac{\rho}{4}}(p)$, $\rho=\frac{1}{12}$.
On the whole $M$, hence in particular on $B_{\frac{\rho}{4}}(x_0)$, 
\begin{align*}
    \mathrm{Ric}&\geq -(n-1)K\doteq-(n-1)\theta_b.
\end{align*}
Choose $R=\frac{\rho}{16}$ and $\lambda_b=2(n-1)\theta_b$.
By Bochner formula, on $B_{2R}(x_{0})$
\[
\ \Delta |\nabla f|^{2}\geq -\lambda_b|\nabla f|^{2}.
\]
Hence Lemma \eqref{lemma:li} applies to $g=|\nabla f|$ and yields (b).
\end{itemize}
\end{proof}

\begin{lemma}
    \label{corollary_osc}  Suppose that $M$ satisfies assumptions \eqref{e:euclvol} and \eqref{e:Ric delta}. Then
\begin{itemize}
\item[(a)] Suppose either $s\leq\frac{\delta}{6}$ or $s\geq 6 \delta$.
Let $f$ be harmonic on $M\setminus B_{\frac{5}{8}s}(p)$ and $x_{0}\in B_{2s}(p)\setminus B_{s}(p)$. Then
\[
 \mathrm{osc}_{B_{\frac{s}{4}}(x_{0})}f\leq C(n, v_{0}, K )s^{1-n/2}\sqrt{\int_{B_{\frac{5}{16}s}(x_0)}|\nabla f|^2}.
\]
\item[(b)] If $x_{0}\in B_{2^7\frac{\delta}{6}}(p)\setminus B_{\frac{\delta}{6}}(p)$, $\rho=\frac{1}{12}$. Let $f$ be harmonic on $M\setminus B_{\frac{\delta}{6}-\frac{3}{8}\rho}(p)$. Then
\[
 \mathrm{osc}_{B_{\frac{\rho}{4}}(x_{0})}f\leq C(n, v_{0}, K )\rho^{1-n/2}\sqrt{\int_{B_{\frac{5}{16}\rho}(x_0)}|\nabla f|^2}.
\]
\end{itemize}
\end{lemma}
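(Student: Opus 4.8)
The plan is to derive both estimates in the same way: first I would turn the mean-value-type gradient bound of Corollary \ref{coro:li ours} into a pointwise $L^\infty$ estimate for $|\nabla f|$ on the ball centred at $x_0$, and then integrate $|\nabla f|$ along minimizing geodesics issuing from $x_0$. I describe part (a); part (b) is handled identically, with $s$ replaced by $\rho=\tfrac1{12}$ and Corollary \ref{coro:li ours}(a) replaced by Corollary \ref{coro:li ours}(b).

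For the gradient bound, fix $w\in B_{s/4}(x_0)$. Using only the triangle inequality and the hypothesis $x_0\in B_{2s}(p)\setminus B_s(p)$, one checks that $\tfrac34 s<d(p,w)<\tfrac94 s$ (so $w$ lies in the annulus appearing in Corollary \ref{coro:li ours}(a)), that $B_{s/8}(w)\subset\{x\,:\,d(p,x)>\tfrac58 s\}$ (so $f$ is harmonic on $B_{s/8}(w)$), and that $B_{s/16}(w)\subset B_{5s/16}(x_0)$. Then Corollary \ref{coro:li ours}(a) applied with base point $w$ bounds $|\nabla f|^2(w)$ by $C\,\mathrm{vol}(B_{s/16}(w))^{-1}\int_{B_{s/16}(w)}|\nabla f|^2$, and combining this with the lower bound $\mathrm{vol}(B_{s/16}(w))\ge v_0(s/16)^n$ from \eqref{e:euclvol} and the last inclusion gives $\sup_{B_{s/4}(x_0)}|\nabla f|\le L$, where
\[
L=C(n,v_0,K)\,s^{-n/2}\Big(\int_{B_{5s/16}(x_0)}|\nabla f|^2\Big)^{1/2}.
\]
Next, given $y,z\in B_{s/4}(x_0)$, I join each of them to the centre $x_0$ by a minimizing geodesic, which by minimality stays within distance $d(y,x_0)<\tfrac s4$ (resp. $d(z,x_0)<\tfrac s4$) of $x_0$, hence inside $B_{s/4}(x_0)$, a region on which $f$ is harmonic and therefore smooth. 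The fundamental theorem of calculus along these geodesics together with the gradient bound gives $|f(y)-f(x_0)|\le\tfrac s4 L$ and $|f(z)-f(x_0)|\le\tfrac s4 L$, whence $|f(y)-f(z)|\le\tfrac s2 L$; taking the supremum over $y,z\in B_{s/4}(x_0)$ yields precisely the claim of (a). Routing the path through $x_0$ is essential here: a minimizing geodesic joining $y$ to $z$ directly is in general only contained in $B_{s/2}(x_0)$, where neither the gradient estimate nor harmonicity of $f$ is available. For (b) one repeats these two steps with $\rho$ in place of $s$, the point being that for $w\in B_{\rho/4}(x_0)$ with $x_0\in B_{2^7\delta/6}(p)\setminus B_{\delta/6}(p)$ one has $\tfrac\delta6-\tfrac\rho4\le d(p,w)\le 2^7\tfrac\delta6+\tfrac\rho4$ and $B_{\rho/8}(w)\cap B_{\delta/6-3\rho/8}(p)=\emptyset$, so that Corollary \ref{coro:li ours}(b) applies, while $B_{\rho/16}(w)\subset B_{5\rho/16}(x_0)$ and \eqref{e:euclvol} close the argument.

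The main obstacle, such as it is, is not analytic: the entire analytic content (Bochner's formula and the Moser-type mean value inequality of Lemma \ref{lemma:li}) is already packaged in Corollary \ref{coro:li ours}. What requires care is the geometric bookkeeping — checking, for every auxiliary point $w$, that \emph{all} the hypotheses of Corollary \ref{coro:li ours} hold simultaneously (membership in the prescribed annulus about $p$, harmonicity of $f$ on $B_{s/8}(w)$, resp. $B_{\rho/8}(w)$, and containment of the averaging ball in $B_{5s/16}(x_0)$, resp. $B_{5\rho/16}(x_0)$), and noticing that the integration path has to pass through the centre $x_0$ to remain inside the region controlled in the first step. All of these are elementary triangle-inequality manipulations with the specific radii fixed in the statement.
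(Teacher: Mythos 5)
Your proposal is correct and coincides with the paper's own argument: both apply the gradient mean-value estimate of Corollary \ref{coro:li ours} at points of $B_{s/4}(x_0)$ (resp.\ $B_{\rho/4}(x_0)$), use \eqref{e:euclvol} to replace $\mathrm{vol}(B_{s/16}(\cdot))^{-1}$ by $v_0^{-1}(s/16)^{-n}$ after noting $B_{s/16}(\cdot)\subset B_{5s/16}(x_0)$, and then integrate $|\nabla f|$ along a minimizing geodesic from the centre $x_0$ to each point of the ball. Your explicit observation that the geodesics must be routed through $x_0$ (so that they stay in the annulus where Corollary \ref{coro:li ours} applies and where $f$ is harmonic) is precisely what the paper does implicitly by taking the geodesic $\gamma$ from $x_0$ of length $l\le s/4$.
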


\begin{proof}
\begin{itemize}
\item[(a)] Suppose either $s\leq\frac{\delta}{6}$ or $s\geq 6 \delta$, and let $x_{0}\in B_{2s}(p)\setminus B_{s}(p)$. For any $y\in B_{\frac{s}{4}}(x_0)$, let $\gamma(t)$ be a minimizing geodesic from $x_0$ to $y$ with $\mathrm{length}(\gamma)=l\leq\frac{s}{4}$. Then for all $t\in [0,l]$, $\gamma(t)\in B_{\frac{9}{4}s}(p)\setminus B_{\frac{3}{4}s}(p)$ and
\[
\ |\nabla f|^{2}(\gamma(t))\leq C(n, v_{0},K)\frac{1}{s^{n}}\int_{B_{\frac{5}{16}s}(x_0)}|\nabla f|^{2},
\]
as a consequence of Corollary \ref{coro:li ours} (a). Hence
\begin{align*}
|f(x_0)-f(y)|&\leq \int_{0}^{l}|\nabla f|(\gamma(t))dt\leq \frac{s}{4}\sup_{t\in[0,l]}|\nabla f|(\gamma(t))\\
&\leq \frac{s}{4}\sqrt{C(n,v_{0},K)\frac{1}{s^{n}}\int_{B_{\frac{5}{16}s}(x_0)}|\nabla f|^{2}}.
\end{align*}
\item[(b)] Let $x_{0}\in B_{2^7\frac{\delta}{6}}(p)\setminus B_{\frac{\delta}{6}}(p)$, $\rho=\frac{1}{12}$. For any $y\in B_{\frac{\rho}{4}}(x_0)$, let $\gamma(t)$ be a minimizing geodesic from $x_0$ to $y$ with $\mathrm{length}(\gamma)=l\leq\frac{\rho}{4}$. Then for all $t\in [0,l]$, $\gamma(t)\in B_{2^7\frac{\delta}{6}+\frac{\rho}{4}}(p)\setminus B_{\frac{\delta}{6}-\frac{\rho}{4}}(p)$. Hence
\begin{align*}
|\nabla f|^{2}(\gamma(t))&\leq C(n, v_{0}, K)\frac{1}{\rho^{n}}\int_{B_{\frac{\rho}{16}}(\gamma(t))}|\nabla f|^{2}\\
&\leq C(n, v_{0}, K)\frac{1}{\rho^{n}}\int_{B_{\frac{5}{16}\rho}(x_0)}|\nabla f|^{2},
\end{align*}
as a consequence of Corollary \ref{coro:li ours} (b). Hence
\begin{align*}
|f(x_0)-f(y)|\leq&\int_{0}^{s}|\nabla f(\gamma(t))|dt\leq\frac{\rho}{4}\sup_{t\in[0,l]}|\nabla f|(\gamma(t))\\
\leq&\frac{\rho}{4}\sqrt{C(n, v_{0},k)\frac{1}{\rho^{n}}\int_{B_{\frac{5}{16}\rho}(x_0)}|\nabla f|^{2}}.
\end{align*}
\end{itemize}
\end{proof}

\begin{lemma}[Lemma 2.1 in \cite{LiTam95}]
Let $X$ be a topological space and let $f:X\to\mathbb{R}$ be a continuous function. Let $\gamma:[0,1]\to X$ be a continuous curve which is covered by finitely many non-empty open sets $U_{1},\dots,U_k$. Then
\[
\ \left|f(\gamma(0))-f(\gamma(1))\right|\leq \sum_{i=1}^{k}\mathrm{osc}_{U_{i}}f.
\]
\end{lemma}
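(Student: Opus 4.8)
The plan is to forget about the curve structure almost entirely and to exploit only the fact that $\gamma([0,1])$ is a \emph{connected} subset of $X$, being the continuous image of the connected set $[0,1]$; the parametrization plays no further role. Write $C\doteq\gamma([0,1])$ and, after discarding the indices $i$ for which $U_i\cap C=\emptyset$, assume that each of $U_1,\dots,U_m$ (with $m\le k$) meets $C$. The heart of the argument is to show that the \emph{nerve graph} $\mathcal G$ of the cover restricted to $C$ --- the graph on vertex set $\{1,\dots,m\}$ in which $i$ and $j$ are joined by an edge whenever $U_i\cap U_j\cap C\ne\emptyset$ --- is connected.

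For this, I would argue by contradiction: if $\mathcal G$ were disconnected, write $\{1,\dots,m\}=A\sqcup B$ with no edge between $A$ and $B$, so that $(\bigcup_{i\in A}U_i)\cap(\bigcup_{j\in B}U_j)\cap C=\emptyset$; then $C$ is the union of the two disjoint, nonempty, relatively open sets $(\bigcup_{i\in A}U_i)\cap C$ and $(\bigcup_{j\in B}U_j)\cap C$, contradicting the connectedness of $C$ (here openness of the $U_i$ is used). Once $\mathcal G$ is known to be connected, pick indices $i_0,i_1$ with $\gamma(0)\in U_{i_0}$ and $\gamma(1)\in U_{i_1}$; both are vertices of $\mathcal G$ since $\gamma(0),\gamma(1)\in C$. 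Choose a \emph{simple} path $i_0=j_0,j_1,\dots,j_\ell=i_1$ in $\mathcal G$, and for each edge $\{j_r,j_{r+1}\}$ select a point $z_r\in U_{j_r}\cap U_{j_{r+1}}\cap C$; also set $z_{-1}\doteq\gamma(0)$ and $z_\ell\doteq\gamma(1)$. Then $z_{r-1},z_r\in U_{j_r}$ for every $r\in\{0,\dots,\ell\}$, whence $|f(z_{r-1})-f(z_r)|\le\mathrm{osc}_{U_{j_r}}f$.

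Summing along the chain and using the triangle inequality for $f\circ\gamma$ at the endpoints gives
\[
|f(\gamma(0))-f(\gamma(1))|=|f(z_{-1})-f(z_\ell)|\le\sum_{r=0}^{\ell}\mathrm{osc}_{U_{j_r}}f,
\]
and since the path is simple the indices $j_0,\dots,j_\ell$ are pairwise distinct, so the right-hand side is a subsum of $\sum_{i=1}^{k}\mathrm{osc}_{U_i}f$, all terms being nonnegative. The degenerate case $i_0=i_1$ is immediate: take the trivial one-vertex path and note that $\gamma(0),\gamma(1)\in U_{i_0}$.

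The single point that requires care --- and the reason the naive strategy of walking along $\gamma$ and recording the oscillation of whichever $U_i$ one is currently inside does \emph{not} work --- is precisely to guarantee that each $\mathrm{osc}_{U_i}f$ is counted \emph{at most once}: a continuous curve may re-enter a given $U_i$ arbitrarily often, so a direct traversal accumulates repeated copies of the same oscillation term. Routing instead through a simple path in the nerve graph circumvents this. Note also that continuity of $\gamma$ is invoked only to ensure that $C=\gamma([0,1])$ is connected, and finiteness of the cover is used merely to keep $\mathcal G$ a finite graph, although the argument goes through verbatim for an arbitrary open cover of $C$.
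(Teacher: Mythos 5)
The paper does not reproduce a proof of this lemma; it is imported verbatim from Li--Tam (their Lemma~2.1) and used as a black box, so there is no in-paper argument to compare yours against. On its own terms your proof is correct and complete: connectedness of $C=\gamma([0,1])$ forces the nerve graph of the sub-cover $\{U_i : U_i\cap C\neq\emptyset\}$ to be connected (the partition-into-two-relatively-open-pieces argument is exactly right), a simple path from a vertex containing $\gamma(0)$ to one containing $\gamma(1)$ yields a chain $z_{-1},z_0,\dots,z_\ell$ with consecutive terms lying in a common $U_{j_r}$, and the telescoping estimate gives a subsum of $\sum_i\mathrm{osc}_{U_i}f$ since the $j_r$ are distinct and each oscillation is $\ge 0$. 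Your closing remark correctly pinpoints why a naive traversal of $\gamma$ (e.g.\ via a Lebesgue-number subdivision of $[0,1]$) is insufficient as stated: it produces a walk, not a simple path, in the nerve graph, and hence may revisit a $U_i$; passing to a simple path is precisely the repair, and this combinatorial reduction is also the substance of Li--Tam's argument, so your route is essentially the standard one rather than a genuinely different alternative.
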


\begin{lemma}\label{LemmaA.VIII}
Suppose that $M$ satisfies assumptions \eqref{e:euclvol} and \eqref{e:Ric delta}. Let $f$ be harmonic on $M_{r/2}$. Suppose either $r\leq \frac{\delta}{6}$ or $r\geq 6\delta$. For every $x,y$ in the same connected component of $M_{r}$, it holds
\[
\ |f(x)-f(y)|\leq C(n,K,v_{0},V_{0})\delta^{\frac{3n}{2}-1}r^{1-\frac{n}{2}}\left(\int_{M_{r/2}}|\nabla f|^{2}\right)^{\frac{1}{2}}.
\]
\end{lemma}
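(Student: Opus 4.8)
The plan is to run the covering/chaining argument of Li--Tam (compare the proof of \cite[Lemma 2.3]{LiTam95}), but feeding in the scale-adapted curvature control of Corollary \ref{coro:li ours}, the quantitative oscillation bound of Lemma \ref{corollary_osc}, the topological oscillation lemma (Li--Tam's Lemma 2.1 above), and the uniform Euclidean volume control \eqref{e:euclvol} to count balls. Fix $x,y$ in the same connected component $E$ of $M_{r}$ and choose a continuous curve $\gamma\colon[0,1]\to E$ from $x$ to $y$; since $\gamma([0,1])$ is compact, $T\doteq\sup_{t}d(p,\gamma(t))<\infty$, but it is important that no bound on $T$ will enter the estimate. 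Throughout, $f$ is understood to be harmonic on $M_{r/2}$, and we recall that the oscillation estimates of Lemma \ref{corollary_osc} are local to the component, so they apply on each of the balls below.

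\emph{Dyadic annuli.} For each integer $k\ge0$ with $2^{k}r\le\delta/6$ or $2^{k}r\ge6\delta$, cover $B_{2^{k+1}r}(p)\setminus\overline{B_{2^{k}r}(p)}$ by $\ell=\ell(n,v_{0},V_{0})$ balls of radius $2^{k}r/4$ with centres in that annulus (Corollary \ref{coro:covering}(a)). On each such ball $B$, Corollary \ref{coro:li ours}(a) gives $\mathrm{Ric}\ge-(n-1)9K(2^{k}r)^{-2}$ on a suitable concentric sub-ball, so Lemma \ref{corollary_osc}(a) applies and yields $\mathrm{osc}_{B}f\le C(n,v_{0},K)(2^{k}r)^{1-n/2}\big(\int_{\widehat B}|\nabla f|^{2}\big)^{1/2}$, where $\widehat B$ is the $\tfrac{5}{16}(2^{k}r)$-enlargement of $B$. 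A direct check (using $r\le\delta/6$ in the inner range and $2^{k}r\ge6\delta$ in the outer range) shows that every $\widehat B$ meeting $\gamma$ lies in $M_{r/2}$, that at a fixed scale the $\widehat B$'s have multiplicity $\le C(n,v_{0},V_{0})$, and that across scales the annuli whose indices differ by at least $3$ have disjoint $\widehat B$'s. Hence, summing over all dyadic covering balls that meet $\gamma$, Cauchy--Schwarz and the convergence (for $n\ge3$) of $\sum_{k\ge0}2^{k(2-n)}$ give
\[
\sum_{\mathrm{dyadic}\ B\cap\gamma\ne\emptyset}\mathrm{osc}_{B}f\ \le\ C(n,v_{0},V_{0},K)\,r^{1-n/2}\Big(\int_{M_{r/2}}|\nabla f|^{2}\Big)^{1/2},
\]
a bound independent of $T$.

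\emph{The bad annulus.} If $r\ge6\delta$, then $M_{r}$ does not meet $B_{6\delta}(p)$ and the previous step already yields the claim (recall $\delta^{3n/2-1}\ge1$ since we work under $\delta\ge1$, as in the excerpt; the case $\delta<1$, where there is no genuine bad region, is easier and treated separately). If instead $r\le\delta/6$, the ``bad'' region $B_{2^{7}\delta/6}(p)\setminus\overline{B_{\delta/6}(p)}$, where only the a priori bound $\mathrm{Ric}\ge-(n-1)K$ is available, is covered by $C(n,v_{0},V_{0})\,\delta^{n}$ balls of the \emph{fixed} radius $\rho/4=\tfrac{1}{48}$ (two applications of Corollary \ref{coro:covering}(b) on the sub-annuli $B_{2^{6}R}(p)\setminus B_{R}(p)$ with $R=\delta/6$ and $R=2^{6}\delta/6$). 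On each such $B$, Lemma \ref{corollary_osc}(b) gives $\mathrm{osc}_{B}f\le C(n,v_{0},K)\rho^{1-n/2}\big(\int_{\widehat B}|\nabla f|^{2}\big)^{1/2}=C(n,v_{0},K)\big(\int_{\widehat B}|\nabla f|^{2}\big)^{1/2}$, and again each $\widehat B$ meeting $\gamma$ lies in $M_{r/2}$ with bounded multiplicity. Applying Cauchy--Schwarz in the number of balls,
\[
\sum_{\mathrm{bad}\ B\cap\gamma\ne\emptyset}\mathrm{osc}_{B}f\ \le\ C(n,v_{0},V_{0},K)\,\delta^{n/2}\Big(\int_{M_{r/2}}|\nabla f|^{2}\Big)^{1/2}.
\]
The two regimes overlap but leave no gap, so the finitely many balls above that meet $\gamma$ cover $\gamma([0,1])$.

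\emph{Conclusion.} By the topological oscillation lemma, $|f(x)-f(y)|\le\sum_{B\cap\gamma\ne\emptyset}\mathrm{osc}_{B}f$, whence
\[
|f(x)-f(y)|\ \le\ C(n,K,v_{0},V_{0})\big(r^{1-n/2}+\delta^{n/2}\big)\Big(\int_{M_{r/2}}|\nabla f|^{2}\Big)^{1/2}.
\]
Finally $\delta\ge1$ gives $r^{1-n/2}\le\delta^{3n/2-1}r^{1-n/2}$, while $0<r\le\delta/6$ together with $n\ge3$ gives $r^{n/2-1}\le\delta^{n/2-1}\le\delta^{n-1}$, i.e. $\delta^{n/2}\le\delta^{3n/2-1}r^{1-n/2}$; the stated inequality follows. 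The only genuine work is the geometric bookkeeping: checking that every enlarged ball that matters lies in $M_{r/2}$ (so $f$ is harmonic there and the local $L^{2}$ integrals make sense) and that the enlarged balls have uniformly bounded overlap — both consequences of \eqref{e:euclvol} and of the $\rho/8$-separation built into the covering lemma — and observing that a possibly wild curve $\gamma$ reaching out to an uncontrolled distance $T$ does no damage, precisely because the dyadic series of oscillations converges with a limit governed by the innermost scale $r$ alone.
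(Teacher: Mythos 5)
Your proof is correct and follows essentially the same dyadic decomposition strategy as the paper's: split $\gamma\subset M_r$ into good inner/outer dyadic annuli handled by Lemma \ref{corollary_osc}(a) together with Corollary \ref{coro:covering}(a), and a bad middle annulus handled at the fixed scale $\rho/4$ by Lemma \ref{corollary_osc}(b) together with Corollary \ref{coro:covering}(b), then apply the topological oscillation lemma and sum.

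There is one genuine refinement in your write-up worth flagging: the paper simply bounds each local Dirichlet integral by the global one and then counts balls, which gives a $\delta^{n}$ contribution from the bad annulus; you instead observe that the enlarged balls $\widehat B$ have uniformly bounded overlap (by maximality of the packing and the Euclidean volume bounds \eqref{e:euclvol}), so a Cauchy--Schwarz in the number of balls yields the sharper $\delta^{n/2}$. This is a strict improvement, although after relaxing to the stated power $\delta^{\frac{3n}{2}-1}$ both versions give the same conclusion. Two small points to tighten: (1) your two applications of Corollary \ref{coro:covering}(b) with $R=\delta/6$ and $R=2^{6}\delta/6$ over-cover, and the second one produces covering balls with centres possibly outside $B_{2^{7}\delta/6}(p)\setminus B_{\delta/6}(p)$, which is the precise region where Lemma \ref{corollary_osc}(b) is stated; the clean fix (as in the paper) is to apply Corollary \ref{coro:covering}(b) once with $R=2^{j'}r\in(\delta/6,\delta/3]$, since then $B_{2^{6}R}(p)\setminus B_{R}(p)$ already covers the gap between the inner and outer good annuli while keeping all centres in the admissible region. (2) The phrase "the oscillation estimates of Lemma \ref{corollary_osc} are local to the component" is a bit loose; what is actually needed and what your "direct check" does supply is that each enlarged ball $\widehat B$ that meets $\gamma$ is contained in $M_{r/2}$, so that $f$ is harmonic there and the local Dirichlet integrals on the right of Lemma \ref{corollary_osc} are controlled by $\int_{M_{r/2}}|\nabla f|^{2}$.
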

\begin{proof}
    Consider $x,y$ in the same connected component of $M_{r}$. Then, there exists a continuous curve $\gamma:[0,1]\to M_{r}$ with  $\gamma(0)=x$, $\gamma(1)=y$. Let $J$ be such that $\gamma\subset M_{r}\cap B_{2^{J}r}(p)=\bigcup_{j=1}^{J}A_{j}$,
where $A_{j}=M_r\cap(B_{2^{j}r}(p)\setminus B_{2^{j-1}r}(p))$. Let $j^{\prime}=\sup\left\{j\in\mathbb Z\,:\,2^{j-1}r\le\frac{\delta}{6}\right\}$, $j^{\prime\prime}=\inf\left\{j\in\mathbb N_{\ge 1}\,:\,2^{j-1}r\ge 6\delta\right\}$. 
If $r\ge 6\delta$, then $j''=1$ and
\begin{equation}
\label{e:diadic0}    
 \gamma\subseteq \bigcup_{j=j^{\prime\prime}}^{J}A_{j}.
\end{equation}
Otherwise,
\[
\ j^{\prime}> \log_{2}\frac{\delta}{6r},\quad j^{\prime\prime}< 2+\log_{2}\frac{6 \delta}{r},
\]
so that $j^{\prime\prime}-j^{\prime} < 2+\log_{2}36$, i.e. $j^{\prime\prime}\le j^{\prime}+ 7$,
and
\begin{equation}
\label{e:diadic}    
 \gamma\subseteq \bigcup_{j=1}^{j^{\prime}}A_{j}\cup \bigcup_{j=j^{\prime\prime}}^{J}A_{j}\cup\hat{A},
\end{equation}
where $\hat{A}\doteq M_r\cap(B_{2^{j^{\prime}+6}r}(p)\setminus B_{2^{j^{\prime}}r}(p))$, with the understanding that $\bigcup_{j=1}^{j^{\prime}}A_{j}$ may be empty if 
$j^\prime\le 0$.

For $j\leq j^{\prime}$ or $j\geq j^{\prime\prime}$, by Corollary \ref{coro:covering}(a) applied with $R=2^{j-1}r$, $A_{j}$ can be covered by $\ell=\ell(n, v_{0}, V_{0})$ balls $\left\{B_{i}^{(j)}\right\}_{i=1}^{\ell}$ of radius $\frac{2^{j-1}r}{4}=2^{j-3}r$ and Lemma \ref{corollary_osc}(a) applies with $s=2^{j-1}r$ to each of the $B_{i}^{(j)}$. Note that if  $B_{i}^{(j)}=B_{\frac{2^{j-1}r}{4}}(x_i^{(j)})$ is any of these balls then $B_{\frac{5}{16}2^{j-1}r}(x_i^{(j)})\subset M_{\frac{r}{2}}$.

\textbf{1st case: $r>6\delta$}. In this case, recalling \eqref{e:diadic0}, 
\begin{align*}
|f(x)-f(y)|\leq&\sum_{j= 1}^{J}\sum_{i=1}^{\ell}\mathrm{osc}_{B_{i}^{(j)}}f\\
\leq&\ell\,C(n,K,v_{0})\sum_{j\geq 1}\left(2^{j-1}r\right)^{1-\frac{n}{2}}\left(\int_{M_{r/2}}|\nabla f|^{2}\right)^{\frac{1}{2}}\\
\leq&C(n,K,v_{0},V_{0})r^{1-\frac{n}{2}}\left(\int_{M_{r/2}}|\nabla f|^{2}\right)^{\frac{1}{2}}\\
\leq&C(n,K,v_{0},V_{0})\delta^{\frac{3n}{2}-1}r^{1-\frac{n}{2}}\left(\int_{M_{r/2}}|\nabla f|^{2}\right)^{\frac{1}{2}},
\end{align*}
as $\delta>1$.

\textbf{2nd case: $r<\frac{\delta}{6}$}. In this case  $j^\prime\ge 1$ and all the term at the (RHS) of \eqref{e:diadic} are non trivial.
By Corollary \ref{coro:covering}(b) applied with $R=2^{j^\prime}r$, $\hat{A}$ can be covered by $\hat{\ell}=\hat \ell(n, v_{0}, V_{0})$ balls $\left\{\hat{B}_{i}\right\}_{i=1}^{\hat\ell}$ of radius $\frac{\rho}{4}=\frac{1}{48}$, with $\hat{\ell}\leq C(n,v_{0}, V_{0})(2^{j^\prime}r)^n\le C(n,v_{0}, V_{0})\delta^n$. Since $\hat A \subset B_{2^7\frac{\delta}{6}}(p) \setminus B_{\frac{\delta}{6}}(p)$, Lemma \ref{corollary_osc}(b) applies with $s=\rho$ to each of the $\hat{B}_{i}$. If  $\hat B_{i}=B_{\frac{\rho}{4}}(\hat x_i)$ is any of these balls then $B_{\frac{5}{16}\rho}(\hat x_i)\subset M_{\frac{r}{2}}$ because  
\[
\ \frac{r}{2}<\frac{\delta}{12}\leq \frac{\delta}{6}-\frac{5\rho}{16}\le 2^{j^\prime}r-\frac{5\rho}{16}\le d(\hat x_i,p)-\frac{5\rho}{16}.
\]
Since $\delta^{1-\frac{n}{2}}\le (6r)^{1-\frac{n}{2}}$, then
\begin{align}\label{osc1}
|f(x)-f(y)|&\leq\sum_{j=1,\ldots,j^{\prime}, j^{\prime\prime}, \ldots,J}\sum_{i=1}^{\ell}\mathrm{osc}_{B_{i}^{(j)}}f+\sum_{i=1}^{\hat{\ell}}\mathrm{osc}_{\hat{B}_{i}}f\\
&\leq C(n,K,v_0,V_0)  \left[\ell \sum_{j\ge 1}\left(2^{j-1}r\right)^{1-\frac{n}{2}}\left(\int_{M_{r/2}}|\nabla f|^2\right)^{\frac{1}{2}}+\hat{\ell}\left(\int_{M\setminus B_{\frac{\delta}{6}-\frac{5}{16}\rho}(p)}|\nabla f|^2\right)^{\frac{1}{2}}\right]\nonumber\\
&\leq C(n,K,v_{0}, V_{0})\left[r^{1-\frac{n}{2}}+\delta^{n}\right]\left(\int_{M_{r/2}}|\nabla f|^2\right)^{\frac{1}{2}},\nonumber\\
&=C(n, K, v_{0}, V_{0})\left[r^{1-\frac{n}{2}}+\delta^{1-\frac{n}{2}+n+\frac{n}{2}-1}\right]\left(\int_{M_{r/2}}|\nabla f|^{2}\right)^{\frac{1}{2}}\nonumber\\
&\leq C(n,k,v_{0},V_{0})\left(1+\delta^{\frac{3n}{2}-1}\right)r^{1-\frac{n}{2}}\left(\int_{M_{r/2}}|\nabla f|^{2}\right)^{\frac{1}{2}}\nonumber\\
&\leq C(n,K,v_{0}, V_{0}) \delta^{\frac{3n}{2}-1} r^{1-\frac{n}{2}}\left(\int_{M_{r/2}}|\nabla f|^{2}\right)^{\frac{1}{2}}.\nonumber
\end{align}

\end{proof}
\begin{lemma}[\cite{CY}]\label{lemma:harnack}
Suppose that $\mathrm{Ric}\geq -(n-1)\theta$ on $B_{R}(x_{0})$ and let $f>0$ be such that $\Delta f=0$ on $B_{R}(x_{0})$. Then
\begin{equation}\label{Est1}
C^{-1}\leq \frac{f(x)}{f(x_{0})}\leq C\qquad\forall\,x\in B_{R/2}(x_{0}),
\end{equation}
with $C=e^{C_n(1+\sqrt{\theta}R)}$.
\end{lemma}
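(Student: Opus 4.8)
The plan is to obtain the Harnack inequality as a consequence of the Cheng--Yau local gradient estimate for positive harmonic functions, combined with integration along minimizing geodesics.

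First I would reduce matters to the gradient estimate. Setting $u\doteq\log f$ (well defined since $f>0$ on $B_R(x_0)$), the harmonicity of $f$ gives $\Delta u=-|\nabla u|^2$. Applying the Bochner formula to $u$, using $\mathrm{Ric}\ge-(n-1)\theta$ and $|\mathrm{Hess}\,u|^2\ge(\Delta u)^2/n$, one gets for $w\doteq|\nabla u|^2$ that
\[
\tfrac12\Delta w\ge\tfrac1n w^2-\langle\nabla u,\nabla w\rangle-(n-1)\theta\,w\qquad\text{on }B_R(x_0).
\]
Then I would choose a cut-off $\phi$ with $\phi\equiv1$ on $B_{R/2}(x_0)$, $\supp\phi\subset B_R(x_0)$, $|\nabla\phi|^2\le C\phi/R^2$, and $\Delta\phi\ge-C_n(1+\sqrt\theta R)/R^2$ — the last bound being exactly what the Laplacian comparison theorem delivers from a lower Ricci bound — and evaluate the displayed inequality at an interior maximum point of $\phi w$. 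The resulting quadratic inequality for $\phi w$ at that point yields
\[
\sup_{B_{R/2}(x_0)}|\nabla\log f|\le C_n\Big(\tfrac1R+\sqrt\theta\Big),
\]
which is the Cheng--Yau estimate; alternatively, since \cite{CY} is already quoted in the statement, one may simply invoke this bound directly.

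Second, I would integrate. Fix $x\in B_{R/2}(x_0)$ and let $\gamma\colon[0,\ell]\to M$ be a unit-speed minimizing geodesic from $x_0$ to $x$, so $\ell=d(x_0,x)<R/2$; every point of $\gamma$ lies in $B_{R/2}(x_0)$ because each sub-arc $\gamma|_{[0,t]}$ is still minimizing, whence $d(x_0,\gamma(t))=t\le\ell<R/2$. Therefore
\[
|\log f(x)-\log f(x_0)|\le\int_0^\ell|\nabla\log f|(\gamma(t))\,dt\le\ell\,C_n\Big(\tfrac1R+\sqrt\theta\Big)\le\tfrac{C_n}{2}\big(1+\sqrt\theta R\big),
\]
and exponentiating (renaming the dimensional constant) gives $C^{-1}\le f(x)/f(x_0)\le C$ with $C=e^{C_n(1+\sqrt\theta R)}$, i.e.\ \eqref{Est1}.

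The hard part will be the gradient estimate itself: one must absorb the negative contribution of $\Delta\phi$ against the favourable quadratic term $w^2/n$ at the maximum point, and keep track of the $\theta$-dependence so that it enters only through the combination $\sqrt\theta R$. The integration step is then entirely elementary. Since the statement already cites \cite{CY}, in practice I would present only the integration argument and refer to \cite{CY} (or to Li's book, already used above in Lemma~\ref{lemma:li}) for the gradient bound.
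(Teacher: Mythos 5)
The paper does not supply its own proof of this lemma; it simply cites Cheng--Yau \cite{CY}. Your reconstruction is the standard argument underlying that citation -- the Cheng--Yau local gradient estimate $\sup_{B_{R/2}(x_0)}|\nabla\log f|\le C_n\bigl(\tfrac1R+\sqrt\theta\bigr)$ obtained from Bochner applied to $u=\log f$ plus a cutoff/maximum-point argument, followed by integration along a minimizing geodesic (which indeed stays in $B_{R/2}(x_0)$) to get the multiplicative oscillation bound $e^{C_n(1+\sqrt\theta R)}$. The details you give are correct and the $\theta$-dependence enters through $\sqrt\theta R$ exactly as claimed, so your proposal is a faithful account of the result being invoked, and your remark that in practice one would just cite the gradient estimate (from \cite{CY} or from Li's book, as the paper does for the mean value inequality) is exactly what the paper does.
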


\begin{lemma}\label{LemmaA.IX}
Suppose that $M$ satisfies assumptions \eqref{e:euclvol} and \eqref{e:Ric delta}.
Let either $r<\frac{\delta}{6}$ or $r>6\delta$, and let $f$ be positive and harmonic in $M_{r/4}$. Then, for every $x,y$ in the same connected component of $M_{r/2}$
\begin{align*}
f(x)\leq
C(n,v_0,V_0,K)\left(f(y)+\delta^{\frac{3}{2}n-1}r^{1-\frac{n}{2}}\left(\int_{M_{\frac{r}{2}}}|\nabla f|^{2}\right)^{\frac{1}{2}}\right),
\end{align*}
\end{lemma}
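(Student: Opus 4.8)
Set $E\doteq\delta^{\frac{3n}{2}-1}\,r^{1-\frac n2}\big(\int_{M_{r/2}}|\nabla f|^2\big)^{1/2}$ and let $\Omega$ be the connected component of $M_{r/2}$ containing $x$ and $y$; as in the surrounding proof we work in the regime $\delta\ge1$ with either $r<\delta/6$ or $r>6\delta$, and (as the lemma will only be applied to a Green kernel) we assume $f>0$, which is what makes the Harnack inequality of Lemma \ref{lemma:harnack} available. The plan is to pass from $x$, and from $y$, into the ``deep'' region $M_r$ at a cost controlled by $C(n,v_0,V_0,K)$, and then to invoke the oscillation estimate of Lemma \ref{LemmaA.VIII} there (which already absorbs the possibly large number of dyadic annuli into the single factor $\delta^{\frac{3n}{2}-1}$). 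Two bounded-counting facts make this work. First, the Euclidean bounds \eqref{e:euclvol} force $M$ to have at most $C(n,v_0,V_0)$ ends: points chosen at distance $2R$ in distinct unbounded components of $M\setminus\overline{B_r(p)}$ produce that many pairwise disjoint balls of radius $\asymp R$, of total volume $\le V_0(3R)^n$; hence $M_r$ has at most $C(n,v_0,V_0)$ components. Second, again by \eqref{e:euclvol}, the spherical shell $S\doteq\{z\,:\,r/2<d(p,z)<2r\}$ is covered by $N=N(n,v_0,V_0)$ geodesic balls $B_{cr}(z_i)$, $i=1,\dots,N$, with $c=c(n)$ a small dimensional constant.

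The collar $\Omega\setminus M_r$ is contained in $\overline{B_r(p)}$ together with the bounded connected components of $M\setminus\overline{B_r(p)}$ that meet $\Omega$; on each such bounded pocket $P$ the maximum principle gives $\max_P f=\max_{\partial P}f$ with $\partial P\subseteq\partial B_r(p)\subset S$, so the value of $f$ at any collar point is squeezed between $\inf_{S\cap\Omega}f$ and $\sup_{S\cap\Omega}f$ (this is where $r\le1$ enters, keeping the pockets inside the region where \eqref{e:Ric delta} and the above coverings are under control). On each ball $B_{cr}(z_i)$ the curvature hypothesis \eqref{e:Ric delta} yields $\mathrm{Ric}\ge-(n-1)\theta_i$ with $\theta_i\,(cr)^2\le C(K)$: indeed, when $r<\delta/6$ one has $d(p,\cdot)<(2+c)r<\delta/2$ on $B_{cr}(z_i)$, hence $(d(p,\cdot)-\delta)^2\gtrsim\delta^2\gtrsim r^2$ there; and when $r>6\delta$ one has $d(p,\cdot)>r/2-cr>\delta$ on $B_{cr}(z_i)$, hence again $(d(p,\cdot)-\delta)^2\gtrsim r^2$. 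Therefore Lemma \ref{lemma:harnack} applies on each $B_{cr}(z_i)$ with Harnack constant $e^{C_n(1+\sqrt{\theta_i}\,cr)}\le C(n,K)$, and --- $N$ being $\delta$-independent --- a Harnack chain across $S$ relates the values of $f$ on any two components of $S\cap\Omega$ joined through $S$ by a factor $C(n,v_0,V_0,K)$.

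Now fix a path in $\Omega$ from $x$ to $y$. It decomposes into sub-arcs lying in a single component of $M_r$, on each of which Lemma \ref{LemmaA.VIII} --- applicable since $r\le\delta/6$ or $r\ge6\delta$ and $f$ is harmonic on a neighbourhood of $M_{r/2}$ --- contributes the additive error $C\,E$, and sub-arcs lying in the collar, on each of which the previous paragraph (shell covering, uniform Harnack, maximum principle on pockets) contributes the multiplicative factor $C(n,v_0,V_0,K)$. Since $M_r$ has at most $C(n,v_0,V_0)$ components and $S$ at most $N(n,v_0,V_0)$ covering balls, only boundedly many sub-arcs of each kind occur; using $f>0$ to combine the multiplicative and additive errors one obtains
\[
f(x)\ \le\ C(n,v_0,V_0,K)\big(f(y)+E\big),
\]
which is the claimed estimate. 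When $\delta<1$ the identical argument applies, with $\delta^{\frac{3n}{2}-1}$ replaced by the harmless factor $1$.

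I expect the chief difficulty to be organizing this covering/chaining bookkeeping uniformly: showing that the number of alternations of the chosen path between $M_r$ and the collar is bounded by $C(n,v_0,V_0)$ (so the accumulated Harnack constant and the accumulated multiples of $E$ stay controlled), and carefully checking the tameness $\theta_i(cr)^2\le C(K)$ in both branches of the dichotomy $r<\delta/6$, $r>6\delta$ — together with handling the bounded pockets via the maximum principle so that they never enlarge the count. Once these are in place, the statement follows from Lemmas \ref{lemma:harnack} and \ref{LemmaA.VIII} as above; this is precisely the analogue, under the uniform Ahlfors condition \eqref{e:euclvol}, of the Green-function comparison arguments of Li-Tam and Kasue.
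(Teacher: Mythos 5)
Your argument follows the same two-ingredient strategy as the paper's own proof: a Harnack chain through the annular region $B_r(p)\setminus B_{r/2}(p)$ (with the crucial observation that, in either branch $r<\delta/6$ or $r>6\delta$, the curvature on annular balls of radius $\asymp r$ satisfies $\theta r^2\lesssim K$, so that each link of the chain costs only a factor $C(n,K)$ and the number of links is the covering number $\ell(n,v_0,V_0)$ from Corollary \ref{coro:covering}(a)), followed by the oscillation estimate of Lemma \ref{LemmaA.VIII} once both points have been moved into $M_r$. In fact the paper's proof is slightly leaner than what you sketch: rather than decomposing a connecting path into sub-arcs alternating between the collar and $M_r$ --- a decomposition whose number of alternations you correctly flag as needing a uniform bound --- it constructs \emph{two} short explicit chains (one starting at $x$, one at $y$, each consisting of at most $\ell$ balls of radius $r/8$ centered in $B_r(p)\setminus B_{r/2}(p)$), lands on points $\bar x,\bar y\in M_r$, and then applies Lemma \ref{LemmaA.VIII} a single time to $\bar x,\bar y$ after invoking a ``WLOG $\bar x,\bar y$ lie in the same component of $M_r$''. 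This ``WLOG'' is, admittedly, hiding precisely the topological bookkeeping you worry about; your remark that the Ahlfors bounds \eqref{e:euclvol} cap the number of ends (and hence of unbounded components of $M\setminus\overline{B_r(p)}$) is a reasonable way to address it, although the paper does not spell this out. Two further small points of agreement with the record: you correctly observe that the lemma implicitly requires $f>0$ (otherwise the Harnack step Lemma \ref{lemma:harnack} is unavailable), which the statement omits but the application to $f=G(\cdot,p)$ supplies; and the discussion of bounded pockets, which you handle via the maximum principle, is in the paper postponed to the very end of Section \ref{Sect:GreenEstimates} (where $r\le1$ is used for exactly this purpose) rather than dealt with inside Lemma \ref{LemmaA.IX}. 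Overall: same method, with you being more explicit about --- but not fully closing --- the component/alternation bookkeeping that the paper's two-chain formulation keeps trivially bounded by construction.
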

\begin{proof}
Let $x,\ y$ belong to the same connected component of $M_{\frac{r}{2}}$
The annulus $B_{r}(p)\setminus B_{\frac{r}{2}}(p)$ can be covered by $\ell=\ell(n,v_0,V_0)$ balls of radius $\frac{r}{8}$. Hence there exist $s\leq \ell$ balls $\left\{B_{i}\right\}_{i=1}^{s}$ of radius $\frac{r}{8}$ with center in $B_{r}(p)\setminus B_{\frac{r}{2}}(p)$ and points $z_{i}\in B_{i}\cap B_{i+1}$, $i=1,\dots,s-1$, so that $x\dot=z_0\in B_{1}$ and  $B_{s}\cap M_{r}\neq \emptyset$. Let  $z_s=\bar{x}$ be a point in $B_{s}\cap M_{r}$.
Let $B_i= B_{\frac{r}{8}}(x_i)$, $1\le i\le s$, be one of the balls above. 
Note that
$B_{\frac{r}{4}}(x_i)\subset B_{\frac{5}{4}r}(p)\setminus B_{\frac{r}{4}}(p)$. Accordingly, we can
compute as in \eqref{e:Ric theta} and deduce the curvature lower bound  
\begin{align*}
    \mathrm{Ric}&\geq -(n-1)\sup \left\{\frac{K}{1+\left(t-\delta\right)^{2}}:\,t\in \left(\frac{r}{4},\frac{5}{4}r\right)\right\}\\
    &\ge \begin{cases} -(n-1)K\left(\frac{r}{12}\right)^{-2}&\text{if }r\geq 6\delta\\
-(n-1)K\left(\frac{19s}{4}\right)^{-2}&\text{if }r\leq \frac\delta {6}
\end{cases}\nonumber\\
&\ge -\frac{(n-1)144 K}{ s^2}=:-(n-1)\theta_c\nonumber
\end{align*}
 on 
$B_{\frac{r}{4}}(x_i)$.
By Lemma \ref{lemma:harnack} applied with $R=\frac r4$, 
\[
f(z_{i})\leq C(n,K)^{2}f(z_{i+1}),\qquad i=0,\dots,s-1,\]
so that
$f(x)\le C(n,K)^{2\ell} f(\bar x)$.
Reasoning as above, there exists $\bar{y}\in M_{r}$ such that $f(\bar{y})\leq C(n,K)^{2\ell}f(y)$. Without loss on generality we can suppose that $\bar{x}$ and $\bar{y}$ are in the same connected component of $M_{r}$. Lemma \ref{LemmaA.VIII} gives
\begin{align*}
    f(x)&\leq C(n,K)^{2 \ell} f(\bar{x})\leq C(n,K)^{2\ell}\left(f(\bar y)+\delta^{\frac{3n}{2}-1} r^{1-\frac{n}{2}}\left(\int_{M_{\frac{r}{2}}}|\nabla f|^2\right)^{\frac{1}{2}}\right)\\
    &\leq C(n,K)^{4\ell}\left(f( y)+\delta^{\frac{3n}{2}-1}r^{1-\frac{n}{2}}\left(\int_{M_{\frac{r}{2}}}|\nabla f|^2\right)^{\frac{1}{2}}\right).
\end{align*}

\end{proof}

\begin{lemma}[Lemma 1.8 in \cite{LiTam95}]
Let $M$ be complete non-compact and non-parabolic, $G(x,y)$ its minimal positive Green's function and $g(x)=-G(p,x)$. Then, for all $R>r>0$
\[
\ \int_{B_{R}(p)\cap M_{r}}|\nabla g|^{2}\leq 4(S(R)-i^{*}(r)),
\]
and
\[
\ \int_{B_{R}(p)\setminus B_{r}(p)}|\nabla g|^{2}\leq 4(S(R)-i(r)),
\]
where $S(R)=\sup_{\partial B_{R}(p)}g$, $i^{*}(r)=\inf_{\partial M_{r}}g$, $i(r)=\inf_{\partial B_{p}(r)}g$.
\end{lemma}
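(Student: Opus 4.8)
The plan is to obtain both inequalities (in fact with the constant $1$ in place of $4$) from two applications of the maximum principle together with one exact energy computation. Throughout, $g=-G(p,\cdot)$ is negative on $M$, harmonic on $M\setminus\{p\}$, subharmonic on all of $M$ (since $G(p,\cdot)$ is a positive superharmonic function, so $\Delta g=\delta_p\ge 0$ distributionally), with $g(x)\to-\infty$ as $x\to p$ and $g(x)\to 0$ as $x\to\infty$. First I would record two pointwise bounds. As $g$ is subharmonic on the relatively compact ball $B_R(p)$, the maximum principle gives $g\le S(R)$ on $B_R(p)$. Conversely, on each connected component $\Omega$ of $M\setminus\overline{B_r(p)}$ (so that $p\notin\Omega$) the restriction of $G(p,\cdot)$ is positive and harmonic, with $G(p,\cdot)\le -i(r)$ on $\partial\Omega\subseteq\partial B_r(p)$; the maximum principle, applied on $\Omega\cap B_R(p)$ and letting $R\to\infty$ (or directly via $g\to 0$ at infinity), then gives $g\ge i(r)$ on $M\setminus\overline{B_r(p)}$, and, keeping only the unbounded components, whose boundaries constitute $\partial M_r$, it gives $g\ge i^*(r)$ on $M_r$.

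The second ingredient is the identity: for all constants $a<b<0$, the truncation $g_a^b:=\min\{\max\{g,a\},b\}$ satisfies
\[
\int_M|\nabla g_a^b|^2=\int_{\{a<g<b\}}|\nabla g|^2=b-a .
\]
Indeed $\nabla g_a^b=\chi_{\{a<g<b\}}\nabla g$, while $g_a^b$ is Lipschitz on compacta, equal to the constant $a$ near $p$ and to the constant $b$ outside the set $\{g\le b\}$, which is bounded because $g\to 0$ at infinity. Integrating $\nabla g_a^b\cdot\nabla g$ by parts over $B_\rho(p)\setminus\overline{B_\varepsilon(p)}$ for $\rho$ large and $\varepsilon$ small, and using $\Delta g\equiv 0$ away from $p$, the only surviving boundary terms are $b$, from the flux of $\nabla g$ through $\partial B_\rho(p)$, and $-a$, from the flux through a small sphere about $p$, each such flux being equal to $1$ by the normalization of the Green kernel; hence $\int_{B_\rho(p)}\nabla g_a^b\cdot\nabla g=b-a$, and this equals $\int_{\{a<g<b\}}|\nabla g|^2$ once $\rho$ is large enough that $\{g\le b\}\subseteq B_\rho(p)$.

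Finally, the two pointwise bounds give $B_R(p)\cap M_r\subseteq\{i^*(r)\le g\le S(R)\}$ and $B_R(p)\setminus B_r(p)\subseteq\{i(r)\le g\le S(R)\}$; applying the identity with levels $(a,b)=(i^*(r)-\delta,\,S(R)+\delta)$, respectively $(a,b)=(i(r)-\delta,\,S(R)+\delta)$, for small $\delta>0$ and letting $\delta\to 0^{+}$, one obtains
\[
\int_{B_R(p)\cap M_r}|\nabla g|^2\le S(R)-i^*(r),\qquad \int_{B_R(p)\setminus B_r(p)}|\nabla g|^2\le S(R)-i(r),
\]
which is stronger than the asserted inequalities. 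The one delicate point is the behaviour of $g$ at infinity, used both to run the maximum principle on the unbounded components of $M\setminus\overline{B_r(p)}$ and to guarantee that $\{g\le b\}$ is bounded; this is a classical property of the minimal positive Green kernel of a non-parabolic manifold, and holds automatically in the Ahlfors-regular setting in which the lemma is later applied. Alternatively, one may run every step on the Dirichlet Green functions $G_{B_\rho(p)}$ of an exhaustion and pass to the limit $\rho\to\infty$; this, combined with a Caccioppoli-type estimate, is presumably the route of \cite{LiTam95}, and it is what produces the (suboptimal, but amply sufficient) constant $4$ in the statement.
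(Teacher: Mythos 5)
Your strategy---truncating $g$ between levels $a<b<0$, integrating by parts to obtain the exact capacity identity $\int_{\{a<g<b\}}|\nabla g|^2 = b-a$, and then sandwiching $B_R(p)\cap M_r$ (resp.\ $B_R(p)\setminus B_r(p)$) inside a level region via the maximum principle---is a valid and elegant route, and if carried through rigorously it gives the lemma with constant $1$ rather than $4$ (with equality in $\mathbb R^n$, so $1$ is in fact sharp). The paper itself does not prove this lemma but merely cites it from \cite{LiTam95}, so there is no ``paper's proof'' to compare against; the factor $4$ in the cited statement is consistent with a Caccioppoli-type test-function argument, which is a genuinely different and less sharp approach than yours.

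The one real gap is the assertion that $g(x)\to 0$ as $x\to\infty$. Non-parabolicity guarantees $\inf_M G(p,\cdot)=0$ but not $\lim_{x\to\infty}G(p,x)=0$: attach a parabolic end (a half-cylinder, say) to a non-parabolic one and $G(p,\cdot)$ stays bounded away from zero on the parabolic end, while $M$ remains non-parabolic. This unproved decay enters your argument twice and essentially: first to ensure $\{g\le b\}$ is bounded, without which the flux computation behind the capacity identity does not close; second in the minimum-principle step on the unbounded components of $M\setminus\overline{B_r(p)}$, where the proposal to ``apply the maximum principle on $\Omega\cap B_R(p)$ and let $R\to\infty$'' is circular, since $g$ is uncontrolled on $\partial B_R(p)\cap\Omega$ unless one already knows the decay. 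You flag this, but it should not be dismissed as automatic ``in the Ahlfors-regular setting'': the lemma is stated for general non-parabolic $M$, and its proof must work at that generality.

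The fix you sketch at the end is the correct one, but you misdiagnose it as the source of the constant $4$. Working with the Dirichlet Green's functions $G_\rho$ of an exhaustion by $B_\rho(p)$ costs nothing: for $g_\rho=-G_\rho$ and $a<b<0$ the sublevel $\{g_\rho<b\}$ is relatively compact in $B_\rho(p)$ because $g_\rho\equiv 0$ on $\partial B_\rho(p)$, so your capacity identity holds exactly for $g_\rho$; the maximum-principle inclusions are now on compact domains with known boundary data ($g_\rho=0$ on the outer boundary) and are unproblematic; and letting $\rho\to\infty$ using $g_\rho\nearrow g$ with local $C^1$ convergence recovers the bound with constant $1$. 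The $4$ in Li--Tam's statement is an artifact of their choice of test functions, not of the exhaustion. Replacing the heuristic $g\to0$ at infinity by this exhaustion argument throughout makes your proof complete and strictly sharper than the quoted one.
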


Thus
\[
\ \int_{B_{R}\cap M_{r}}|\nabla G(p,y)|^{2}d\mathrm{vol}(y)\leq 4\left(\sup_{\partial M_{r}}G-\inf_{\partial B_{R}(p)}G\right).
\]
Since $\inf_{\partial B_{R}(p)}G\to 0$ as $R\to\infty$, we get
\begin{equation}\label{e:energy}
\int_{M_{r}}|\nabla G(p,y)|^{2}d\mathrm{vol}(y)\leq 4\sup_{\partial M_{r}}G(p,y).
\end{equation}

As a consequence of the previous lemmas, we can prove the following estimate for the Green's function.
\begin{proposition}\label{th:GreenEstimatesMr}
Suppose that $M$ satisfies assumptions \eqref{e:euclvol} and \eqref{e:Ric delta}.
For any $x\in M_{r}$,
\[
G(x,p)\leq C(n,K,\beta)\delta^{3n-2}r^{2-n}.
\]
\end{proposition}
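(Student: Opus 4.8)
I would bootstrap the monotone quantity $\mathcal S(\rho):=\sup_{M_\rho}G(p,\cdot)$ against itself. First note that \eqref{e:euclvol} gives $\int_1^\infty t\,\mathrm{vol}(B_t(p))^{-1}\,dt<\infty$ since $n\ge 3$, so $M$ is non-parabolic and $G$ exists; moreover $G(p,\cdot)$ is bounded and harmonic on each $M_{\rho/4}$ (as $p\notin M_{\rho/4}$), and, being the minimal positive kernel, it decays to $0$ at infinity along every non-parabolic end, and here all ends are non-parabolic by \eqref{e:euclvol}. Hence $\mathcal S(\rho)<\infty$ and, by the maximum principle, $\mathcal S$ is non-increasing and $\mathcal S(\rho)=\sup_{\partial M_\rho}G(p,\cdot)$. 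I keep the standing assumption $\delta\ge 1$; if $M_\rho=\emptyset$ there is nothing to prove, so assume $\mathcal S(\rho)>0$.

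The core step: fix $r$ with either $r<\delta/6$ or $r>6\delta$, take $x\in M_{r/2}$ lying in a component $N$, and let $y\to\infty$ inside $N$, so $G(p,y)\to 0$. Applying Lemma \ref{LemmaA.IX} to $f=G(p,\cdot)$, letting $y\to\infty$, and invoking the energy bound \eqref{e:energy} at radius $r/2$ yields
\[
G(p,x)\le C(n,v_0,V_0,K)\,\delta^{\frac{3n}{2}-1}\,r^{1-\frac n2}\Big(\int_{M_{r/2}}|\nabla G(p,\cdot)|^2\Big)^{\frac12}\le C\,\delta^{\frac{3n}{2}-1}\,r^{1-\frac n2}\,\mathcal S(r/2)^{\frac12}.
\]
Taking the supremum over $x\in M_{r/2}$ gives the self-improving inequality $\mathcal S(r/2)\le C\,\delta^{\frac{3n}{2}-1}r^{1-\frac n2}\,\mathcal S(r/2)^{\frac12}$, hence $\mathcal S(r/2)\le C^2\,\delta^{3n-2}\,r^{2-n}$. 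Rewriting with $\rho=r/2$ and using $2^{2-n}\le 1$, this proves $\mathcal S(\rho)\le C(n,v_0,V_0,K)\,\delta^{3n-2}\rho^{2-n}$ whenever $\rho\le\delta/12$ or $\rho\ge 3\delta$ (the endpoints being reached by monotonicity of $\mathcal S$).

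It remains to cover the band $\delta/12<\rho<3\delta$, which I would do by monotonicity alone: there $M_\rho\subseteq M_{\delta/12}$, so $\mathcal S(\rho)\le\mathcal S(\delta/12)\le C\delta^{3n-2}(\delta/12)^{2-n}\lesssim\delta^{2n}$, while $\rho<3\delta$ forces $\delta^{3n-2}\rho^{2-n}\gtrsim\delta^{2n}$, so the same bound $\mathcal S(\rho)\le C\,\delta^{3n-2}\rho^{2-n}$ survives there too. Altogether $\mathcal S(\rho)\le C(n,v_0,V_0,K)\,\delta^{3n-2}\rho^{2-n}$ for all $\rho>0$, and since $G(p,x)\le\mathcal S(r)$ for every $x\in M_r$, this is the claim (with $C(n,v_0,V_0,K)$ playing the role of the constant $C(n,K,\beta)$ of the statement, $v_0,V_0$ being the data of \eqref{e:euclvol}).

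The genuinely delicate work is not in this Proposition itself but is packaged in Lemma \ref{LemmaA.IX}, whose covering/Harnack machinery must traverse the ``bad'' annulus around $\partial B_\delta(p)$ — this is precisely why that lemma is restricted to $r<\delta/6$ or $r>6\delta$ and carries the factor $\delta^{3n/2-1}$. What still needs care here is exactly the interpolation across $\rho\asymp\delta$ done above, where one uses that $r^{2-n}\asymp\delta^{2-n}$ in that band so that the extra polynomial loss is absorbed into the $\delta^{3n-2}$ factor, together with the limiting step $y\to\infty$ within a single end, which rests on the standard decay of the minimal Green kernel along non-parabolic ends (cf. \cite{LiTam95}); everything else is bookkeeping with constants already tracked in Lemmas \ref{LemmaA.VIII}--\ref{LemmaA.IX}.
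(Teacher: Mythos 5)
Your proposal is correct and takes essentially the same approach as the paper: apply Lemma \ref{LemmaA.IX} to $G(p,\cdot)$, send $y\to\infty$ along the end so that $G(p,y)\to 0$, invoke the energy bound \eqref{e:energy} together with the maximum principle to close the resulting self-improving inequality, and handle the intermediate band $\delta/12<\rho<3\delta$ by monotonicity of $\sup_{M_\rho}G(p,\cdot)$. The only differences are cosmetic (you apply the lemma at radius $r$ and relabel $\rho=r/2$, the paper applies it at $2r$; you fix the comparison radius at $\delta/12$ for the band, the paper uses $r/36$).
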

\begin{proof}
Suppose first that either $r<\frac{\delta}{12}$ or $r>3\delta$.
Set $f(x)=G(x,p)$. Because of the volume assumption \eqref{e:euclvol} and the curvature assumption \eqref{e:Ric delta}, the unbounded connected component $M_r^x$ of $M_r$ containing $x$ is non-parabolic.
Hence, we can find $y_{k}\to +\infty$ a sequence of points in $M_r^x$ such that $f(y_k)\to 0$ as $k\to\infty$. Note that either $2r<\frac{\delta}{6}$ or $2r>6\delta$. Hence, we can apply Lemma \ref{LemmaA.IX} with $2r$ replacing $r$, and $y=y_k$. Taking the limit as $k\to \infty$ gives, for all $\,x\in M_{r}$, \[
\ G(x,p)\leq C(n,v_0,V_0,K)\delta^{\frac{3}{2}n-1}r^{1-\frac{n}{2}}\left(\int_{M_{r}}|\nabla G|^{2}\right)^{\frac{1}{2}}.
\]
By continuity the same estimate holds also on $\partial M_r$.
Since $G(\cdot,p)$ is obtained as the limit of the increasing sequence of Green's functions relative to an exhaustion of $M$ with Dirichlet boundary conditions, by the maximum principle, $\sup_{M_{r}} G(\cdot,p)=\sup_{\partial M_{r}} G(\cdot,p)$. Hence we can choose $\bar x\in \partial M_r$ such that $f(\bar x)=\sup_{M_r} f$. By \eqref{e:energy},
\[
G(\bar x,p)\leq 
C(n,v_0,V_0,K)\delta^{\frac{3}{2}n-1}r^{1-\frac{n}{2}}
G(\bar x,p)^{\frac{1}{2}},
\]
so that, for any $x\in M_r$,
\[
G(x,p)\le G(\bar x,p)\leq 
C(n,v_0,V_0,K)
\delta^{3n-2}r^{2-n}.
\]
Finally, assume that $\frac{\delta}{12}<r<3\delta$. Since $\frac{r}{36}<\frac{\delta}{12}$, we can exploit the previous case and compute
\[
\ \sup_{x\in M_{r}}G(x,p)\leq \sup_{x\in M_{\frac{r}{36}}}G(x,p)\leq C(n,v_0,V_0,K)\delta^{3n-2} r^{2-n}.
\]
\end{proof}
We are now ready to prove Theorem \ref{th:green estimate_abstract}.
Suppose that $x\in \partial B_{r}(p)$, $r\leq 1$. Suppose $x\notin \overline{M_{r}}$. Let $y\in\partial M_{r}$.  We claim that
\begin{equation}\label{e:harnack final}
    \ C(n)^{-1} G(p,x)\leq G(p,y)\leq C(n)G(p,x).
\end{equation}
Let $\gamma$ be a minimizing geodesic from $x$ to $y$, then $\gamma$ has length at most $2r$.
Let $\sigma$ be a geodesic from $p$ to $y$, whose length is $l(\sigma)=d(y,p)$. The support of $\sigma$ can be covered by $9$ balls of radius $\frac{r}{8}$, say $B_{\frac{r}{8}}(\sigma(j\frac{r}{8}))$, $j=0,\ldots, 8$. If $x\in B_{\frac{r}{4}}(\sigma(j\frac{r}{8}))$ for some $j$ then $j=7,8$ (since otherwise $d(x,p)<r$). If $x\in B_{\frac{r}{4}}(y)$ or $B_{\frac{r}{4}}(\sigma(\frac{7}{8}r))$, then $G(p, \cdot)$ is harmonic in $B_{\frac{r}{2}}(y)\cup B_{\frac{r}{2}}(\sigma(\frac{7}{8}r))$ and
\eqref{e:harnack final} holds true
by Lemma \ref{lemma:harnack} applied (possibly twice) with $R=\frac{r}{2}\leq\frac{1}{2}$, $x_{0}=\sigma(\frac{j}{8}r)$, $j=7,8$, $\theta = K$.

If $x\notin \bigcup_{j=0}^{8}B_{\frac{r}{4}}(\sigma(j\frac{r}{4}))$, then $G(x,\cdot)$ is harmonic in $B_{\frac{r}{4}}(\sigma(j\frac{r}{4}))$ for any $j=0,\ldots, 8$, so that 
\begin{equation}    \label{e:harnack 2}
C(n)^{-1}G(x,y)\le G(p,x)=G(x,p)\leq C(n)G(x,y),
\end{equation}
again by 
Lemma \ref{lemma:harnack} applied nine times
with $R=\frac{r}{4}\leq\frac{1}{4}$, $x_{0}=\sigma(\frac{j}{8}r)$, $j=0,\dots,8$, $\theta = K$. Moreover in this case $y\notin B_{\frac{r}{4}}(x)$. Let $\eta:[0,r]\to M$ a minimizing geodesic from $p$ to $x$ and consider $B_{\frac{r}{4}}(\sigma(\frac{j}{8}r))$, $j=0,\ldots, 8$. As above, if $y\in B_{\frac{r}{4}}(\eta(j\frac{r}{8}))$ for some $j=0,\ldots, 8$ (necessarily $j=7$) and we can trivially conclude. Otherwise, \[
C(n)^{-1}G(y,x)\le G(p,y)=G(y,p)\leq C(n)G(y,x),
\]
 by Lemma \ref{lemma:harnack} applied again nine times, here to the function $G(\cdot, y)$ harmonic on $B_{\frac{r}{4}}(\eta(\frac{j}{8}r))$, $\forall\,j=0,\ldots, 8$, $\theta\leq K$. This latter together with \eqref{e:harnack 2} prove \eqref{e:harnack final}. 
 
 Now, applying Proposition \ref{th:GreenEstimatesMr} gives \eqref{e:Green abstract} for all $x\in\partial B_r(p)$.
Since $G(p,\cdot)$ is obtained as the limit of the increasing sequence of Green's functions relative to an exhaustion of $M$ with Dirichlet boundary conditions, by the maximum principle we obtain
\[
\ \sup_{x\in M\setminus B_{r}(p)}G(p,x)\leq \sup_{\partial B_{r}(p)}G(p,x)\leq C(n,K,v_0,V_0)\max\left\{1,d(o,p)^{3n-2}\right\}r^{2-n}.
\]
The proof of Theorem \ref{th:green estimate_abstract} when $\delta\ge 1$ is thus concluded.
\medskip

Finally suppose that $\delta<1$. Clearly
\[
2+2(d(p,x)-\delta)^2\ge 1 + (d(p,x)-1)^2,
\]
as this latter is equivalent to
\[
d(p,x)^2+2\delta^2-4d(p,x)\delta+2d(p,x)=\delta^2+ (d(p,x)-\delta)^2 + 2(1-\delta)d(p,x)\ge 0.\]
Then,
\begin{equation*}\mathrm{Ric}\geq -\frac{(n-1)K}{1+(d(p,x)-\delta)^2}\geq -\frac{(n-1)2K}{1+(d(p,x)-1)^2},\qquad\forall\, x\in M,\end{equation*}
and up to double the value of $K$ we can reduce to the case $\delta\ge 1$.

\section*{Data availability}
Data sharing not applicable to this article as no datasets were generated or analyzed during
the current study.

\section*{Conflict of interest statement}
On behalf of all authors, the corresponding author states that there is no conflict of interest.

\bibliographystyle{alpha}
\bibliography{ABP}
\end{document}